\documentclass{amsart}

\usepackage{geometry}
\usepackage{amsmath}
\usepackage{amssymb}
\usepackage{amsthm}
\usepackage{color}
\usepackage[shortlabels]{enumitem}

\usepackage{hyperref}

\newtheorem{cor}{Corollary}
\newtheorem{lemma}{Lemma}
\newtheorem{thm}{Theorem}
\newtheorem{defn}{Definition}

\numberwithin{equation}{section}

\usepackage[dvipsnames]{xcolor}

\begin{document}

\title[Additive decompositions of sums of squares]{The Large Sieve Inequality   and \\ Additive decompositions of sums of squares}
\date{}
\author{Christian Elsholtz and Julia Stadlmann}

\begin{abstract} Building on the large sieve,  we prove a new sieve estimate for sets which are missing various residue classes modulo prime squares. For certain interesting residue class configurations, our 
bound is a significant improvement over the corresponding 
Johnsen-Selberg  sieve inequality. 

Our   application of this sieve estimate is 
motivated by Ostmann's problem. We   study additive decompositions of sums of squares and show that any non-trivial sumset decomposition must consist of two sets of roughly equal size: If $\mathcal{A}_1, \mathcal{A}_2 \subseteq \mathbb{Z}$ with $\mathcal{A}_1+\mathcal{A}_2=\{x^2+y^2: x,y \in \mathbb{Z}\}$ and $|\mathcal{A}_1|, |\mathcal{A}_2| \geq 2$, then 
$\sqrt{x}/(\log x)^{9/2} \ll |\mathcal{A}_i\cap [1,x]| \ll \sqrt{x} (\log x)^4$ for $i \in \{1,2\}.$ 
\end{abstract}

\maketitle

\section{Introduction}

In this paper, we prove new sieve bounds for sets which have various residue class restrictions modulo prime squares. Our bounds are particularly strong when there are residue classes modulo $p$ for which many corresponding residue classes modulo $p^2$ are missing. The study of these residue class configurations was inspired by Ostmann's problem, as explained below: \medskip

It is impossible to write the set of primes as a sumset $\mathcal{B}_1+\mathcal{B}_2$ with $|\mathcal{B}_1|, |\mathcal{B}_2|\geq 2$. 
Ostmann's problem~\cite{Ostmann:1968:AZT} asks if there exists an  \textit{asymptotic} additive decomposition of the primes: This would be a pair  of sets $\mathcal{B}_1, \mathcal{B}_2 \subseteq \mathbb{Z}$ with $|\mathcal{B}_1|, |\mathcal{B}_2|>1$ so that the sumset $\mathcal{B}_1 + \mathcal{B}_2$ differs from the set of primes by only finitely many elements. Progress on this problem was made by   Pomerance, Sárközy and Stewart~\cite{Pomerance:1988:DSI}, Hofmann and Wolke~\cite{Hofmann:1996:ADP} and  Elsholtz~\cite{Elsholtz:2001:IGP}.  Finally, Elsholtz and Harper~\cite{Elsholtz:2015:ADR}  proved that if  $\mathcal{B}_1 + \mathcal{B}_2$ differs from the set of primes by only finitely many elements, and  $|\mathcal{B}_1|, |\mathcal{B}_2| \geq 2$, then both $|\mathcal{B}_1\cap [1,x]| $   and $|\mathcal{B}_2\cap [1,x]|$ must satisfy the bounds
\begin{align} \label{thm:els-harper}
\sqrt{x}/(\log x \log \log x) \ll |\mathcal{B}_i\cap [1,x]| \ll \sqrt{x} \log \log x \quad \mbox{ for } i \in \{1,2\}.
\end{align}
If  $\mathcal{B}_1 + \mathcal{B}_2$ is an asymptotic additive decomposition of the primes and $b_1 \in \mathcal{B}_1$, then for any prime $p$, there are only finitely many $b_2 \in \mathcal{B}_2$ with $b_2 \equiv - b_1 \mbox{ mod } p$. Hence elements of $\mathcal{B}_1$ restrict the residue classes which appear in $\mathcal{B}_2$, and the other way around.   Results like~(\ref{thm:els-harper}) were  thus  obtained by combining various sieve estimates, such as Selberg's sieve, Montgomery's large sieve and Gallagher's larger sieve.    Recently, OpenAI~\cite{OpenAI:2026:OST} claimed a full resolution of Ostmann's problem, showing that an asymptotic additive decomposition of the primes cannot exist.

\medskip
For a full resolution of Ostmann's problem, sets $\mathcal{B}_1$ and $\mathcal{B}_2$ which  occupy approximately half the residue classes mod $p$ for each prime $p$ formed  the main obstacle in~\cite{Elsholtz:2015:ADR}. This is because when $S \subseteq \mathbb{N} \cap [1,x]$ covers $\frac{p+1}{2}$ residue classes mod $p$ for each large prime $p$, then Montgomery's large sieve and Gallagher's larger sieve both give the sieve bound $S=O(x^{1/2})$, and this upper bound is sharp, as shown by the choice $S= \{n^2: 0 \leq n \leq x^{1/2}\}$. It is believed that quadratics are the only example for which these sieve estimates are sharp, and that improvements are possible for other sets. This is  known as the inverse question for the large sieve. For residue classes with some additive structure, Green and Harper have made interesting progress on this topic~\cite{Green:2014:IQL}, but   the problem remains unresolved for the general case.  

\subsection{Decompositions of sums of squares}
Since sets with residue class configurations similar to those of the squares formed a key obstacle in Ostmann's problem, we were motivated to study the squares themselves. Unlike primes, the set of sums of two squares obviously can be written as a sumset in a ``non-trivial" way: $\{x^2+y^2: x,y \in \mathbb{Z}\} = \mathcal{S}+\mathcal{S}$ for $\mathcal{S} = \{n^2: n \in \mathbb{N} \cup \{0\}\}$. In this paper we therefore ask what  other, genuinely different  sumset decompositions of $\mathcal{S}+\mathcal{S}$ look like. We  find that   any non-trivial sumset decomposition $\mathcal{A}_1+\mathcal{A}_2=\mathcal{S}+\mathcal{S}$ must consist of two sets $\mathcal{A}_1$ and $\mathcal{A}_2$ of roughly equal size:
\begin{thm}\label{thm:2squares} 
Suppose $\mathcal{A}_1, \mathcal{A}_2 \subseteq \mathbb{Z}$ with $\mathcal{A}_1+\mathcal{A}_2=\{x^2+y^2: x,y \in \mathbb{Z}\}$ and $|\mathcal{A}_1|, |\mathcal{A}_2| \geq 2$. 

Then  for sufficiently large $x$, 
\begin{align}\label{inequ:2square}
\sqrt{x}/(\log x)^{9/2} \ll |\mathcal{A}_i\cap [1,x]| \ll \sqrt{x} (\log x)^4 \quad \mbox{ for } i \in \{1,2\}.
\end{align}
$($The implied constants  and the required size of $x$ depend  on   $ \mathcal{A}_1 $ and $ \mathcal{A}_2$. $)$. 
\end{thm}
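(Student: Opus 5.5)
We argue with the two counting functions $A_i(x)=|\mathcal{A}_i\cap[1,x]|$, and the plan has three stages. First we reduce to a normalized situation. Write $\mathcal{Q}=\{x^2+y^2\}$. Since $\mathcal{Q}\subseteq[0,\infty)$, both $\mathcal{A}_1$ and $\mathcal{A}_2$ are bounded below. After translating $\mathcal{A}_1\mapsto\mathcal{A}_1-c$ and $\mathcal{A}_2\mapsto\mathcal{A}_2+c$, we may assume $\min\mathcal{A}_1=0$. Then $\mathcal{A}_2\subseteq\mathcal{Q}$, and $\mathcal{A}_2$ is infinite, because $\mathcal{Q}$ is infinite while $\mathcal{A}_1$ is bounded below. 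The same holds for $\mathcal{A}_1$ with the roles exchanged.

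\textbf{Lower bounds from the sizes.} Let $Q(x)=|\mathcal{Q}\cap[1,x]|\asymp x/\sqrt{\log x}$ (Landau). Every element of $\mathcal{Q}\cap[1,x]$ is a sum $a_1+a_2$ with both summands $\le x+O(1)$, so
\[
A_1(x)A_2(x)\gg \frac{x}{\sqrt{\log x}}.
\]
Therefore the lower bound $\sqrt{x}/(\log x)^{9/2}$ for one set follows once we have the upper bound $\sqrt{x}(\log x)^4$ for the other. This gives $\sqrt x/(\log x)^{4+1/2}$, which is exactly the exponent $9/2$ in the statement. So everything reduces to proving the upper bound $A_i(x)\ll\sqrt{x}(\log x)^4$ for each $i$.

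\textbf{Upper bound via residue restrictions.} Fix two distinct elements $a,a'\in\mathcal{A}_1$ and set $d=a'-a\neq 0$. Then $a+\mathcal{A}_2$ and $a'+\mathcal{A}_2$ both lie in $\mathcal{Q}$. For a prime $p\equiv 3\pmod 4$, an element of $\mathcal{Q}$ that is divisible by $p$ is divisible by $p^2$, and more precisely $v_p$ of it is even. Hence if $b\in\mathcal{A}_2$ and $b\equiv -a\pmod p$, then $b\equiv -a\pmod{p^2}$, since $v_p(a+b)\ge 1$ forces $v_p(a+b)\ge 2$. So modulo $p^2$, the residue class $-a \bmod p$ contains only one of its $p$ lifts. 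Likewise the class $-a'\bmod p$ has only one admissible lift, and for $p\nmid d$ these are two different classes mod $p$. Using all pairs from a fixed finite subset $\{a_1,\dots,a_k\}\subseteq\mathcal{A}_1$, we get for each large $p\equiv3\pmod 4$ about $k$ classes mod $p$ in which $\mathcal{A}_2$ misses $p-1$ of the $p$ lifts mod $p^2$.

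Besides these, we have the quadratic-type information that $\mathcal{A}_2+a\subseteq\mathcal{Q}$ occupies restricted residues modulo $p$. The key tool is then the new sieve estimate advertised in the abstract: a large sieve inequality with moduli $p^2$ that exploits classes mod $p$ in which almost all lifts mod $p^2$ are missing. Applied with moduli $p^2\le x$, i.e. $p\le\sqrt{x}$, $p\equiv 3\pmod 4$, it should give the $\sqrt{x}$ main term, with polylogarithmic losses coming from the densities of these primes.

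\textbf{Main obstacle.} Excluding only $k$ classes mod $p$ does not obviously give $\sqrt{x}$; the plain large sieve with moduli $p^2$ gives only about $x/\sum_p(\cdot)$. I expect the crux to be proving the sieve estimate itself. We would bound $\sum_{q\le Q}\sum_{h}|S(h/q)|^2$ with $q=p^2$, and use the fact that for $h$ with $p\nmid h$ the sum $S(h/p^2)$ restricted to the missing-lift classes behaves like a complete character-type sum. I would try to combine this with the fact that every element of $\mathcal{A}_2$ lies in $\mathcal{Q}$, hence has at most $(p+1)/2$-like structure mod $p$, as in the Gallagher larger sieve argument used for Ostmann's problem. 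If the sieve only yields $\sqrt x(\log x)^C$ with a worse $C$, I would tune the range of $p$ to recover the exponent $4$.
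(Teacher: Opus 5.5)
Your lower-bound step is the paper's: from $A_1(x)A_2(x)\gg x/\sqrt{\log x}$, an upper bound $\sqrt{x}(\log x)^4$ for one set gives $\sqrt{x}/(\log x)^{9/2}$ for the other. Your mod $p^2$ observation for $p\equiv 3\pmod 4$ is also the right starting point. But the upper bound $A_i(x)\ll\sqrt{x}(\log x)^4$, which is the whole content of the theorem, is not proved, and the route you sketch cannot reach it.

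\textbf{Moduli $p^2$ are the wrong moduli.} Sieving with moduli $p^2$ is the Johnsen--Selberg approach. The large sieve gives the factor $2N$ only for moduli up to $x^{1/2}$; with moduli up to $x$, as you propose, the constant $N+Q^2$ is of size $x^2$. So one is confined to $p\le x^{1/4}$, and in the configurations that occur here this gives only about $x^{3/4}$. That is exactly where the paper's Johnsen--Selberg step stops.

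Theorem~\ref{thm:largesieveimprovement} instead sieves with squarefree moduli $p_1\cdots p_k$, where $p_i\in[\tfrac12M,M]$ and $M^k\le x^{1/2}$. Either some product class $\Omega(p_1)\times\cdots\times\Omega(p_k)$ is overrepresented, or all are underrepresented. In the first case, counting the at most $\Theta^k\prod_i\omega(p_i)$ occupied classes mod $(p_1\cdots p_k)^2$ gives directly $|\mathcal{A}|\le 2x\Theta^k/(\varepsilon p_1\cdots p_k)$ (Lemma~\ref{lem:proof.prop1.common}). In the second case, a perturbed large-sieve lower bound gives a Montgomery-type bound, as if the classes $\Omega(p)$ were missing entirely.

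Also, the ``$(p+1)/2$-like'' structure mod $p$ that you hope to use does not exist. Sums of two squares meet every residue class modulo every odd prime, so the only available restrictions are modulo $p^2$.

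\textbf{Your residue input is too weak.} With $\omega(p)=k$ fixed, the sum $\sum_{p_1>\cdots>p_k}\prod_i\omega(p_i)/(p_i-\omega(p_i))$ over a dyadic range stays bounded, so even Theorem~\ref{thm:largesieveimprovement} gives nothing. The paper uses the fixed-finite-subset input only in the Brun step, where it yields merely $(\log x)^c\ll A_i(x)\ll x(\log x)^{-c}$.

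To reach $\sqrt{x}(\log x)^4$ you need $\omega(p)\gg p/\log p$ restricted classes for $\gg M/\log M$ primes $p\equiv3\pmod 4$ in $[\tfrac12M,M]$, with $M=x^{1/4}$. Run your argument over \emph{all} $a\in\mathcal{A}_1\cap[1,x]$, not a fixed subset. Then the number of classes mod $p$ in which $\mathcal{A}_2\cap[1,x]$ has at most one lift mod $p^2$ is at least $v_{\mathcal{A}_1}(p;x)$, the number of classes mod $p$ met by $\mathcal{A}_1\cap[1,x]$ (Lemma~\ref{lem:basicstructure}). Nothing, however, forces $v_{\mathcal{A}_1}(p;x)$ to be large.

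The paper closes this with a dichotomy (Lemma~\ref{lem:size.step3}). If $v_{\mathcal{A}_1}(p;x)\le p/(100\log p)$ for at least half of the primes $p\equiv 3\pmod 4$ in $[\tfrac12L,L]$, Gallagher's larger sieve gives $A_1(x)\ll L/\log L$. This contradicts an a priori lower bound $A_1(x)\gg L$. Otherwise Theorem~\ref{thm:largesieveimprovement} with $\Theta=1$ gives $A_2(x)\ll x(\log L)^{2k}/L^k$.

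Because an a priori lower bound is required, the proof has to be a bootstrap:
\begin{itemize}
\item Brun's sieve gives $(\log x)^c\ll A_i(x)$;
\item Gallagher plus Johnsen--Selberg give $x^{1/4-\varepsilon}\ll A_i(x)$;
\item the new sieve is applied with $(L,k)=(x^{1/8},3)$;
\item it is applied again with $(L,k)=(x^{1/4},2)$, and this last step produces the exponent $4$.
\end{itemize}
None of these ingredients appears in your proposal.
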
 
 We also consider some explicit constructions of  alternative sumset decompositions of $\mathcal{S}+\mathcal{S}$, which involve subsets or supersets of the squares. 
  
\begin{thm}\label{thm:construction2}
Let  $\mathcal{S} = \{n^2: n \in \mathbb{N} \cup \{0\}\}$. The following is true:
\begin{enumerate}[{\rm (1)}]
\item Many sumset decompositions via subsets: Let $p$ be a prime with $p \equiv 1 \bmod 4$. Let  $M_1, M_2 \subseteq \mathbb{N}$ with $M_1 \cap M_2 = \emptyset$. Let $ \mathcal{A}_1=\mathcal{S}\setminus\{p^2 m^2: m\in M_1\}$ and $\mathcal{A}_2=\mathcal{S}\setminus\{p^2 m^2: m\in M_2\}$.  Then 
\begin{align*} 
&\mathcal{A}_1 + \mathcal{A}_2= \mathcal{S}+\mathcal{S}.
 \end{align*} 
\item No sumset decompositions via supersets: Let $\mathcal{A}_1, \mathcal{A}_2 \subseteq \mathbb{Z}$ with $\mathcal{S} \subseteq \mathcal{A}_1$ and $\mathcal{S} \subseteq \mathcal{A}_2$. Suppose further  that $\mathcal{A}_1 \neq \mathcal{S}$ or $\mathcal{A}_2 \neq \mathcal{S}$. Then 
\begin{align*} 
&\mathcal{A}_1 + \mathcal{A}_2 \neq \mathcal{S}+\mathcal{S}.
 \end{align*} 
\end{enumerate}
\end{thm}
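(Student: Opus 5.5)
For part (1), the inclusion $\mathcal{A}_1+\mathcal{A}_2\subseteq \mathcal{S}+\mathcal{S}$ is trivial, so the work is to represent every $n=a^2+b^2$ with $a,b\ge 0$. I will use the following convention: for $y\ge 0$, say $y^2$ is \emph{blocked} in $\mathcal{A}_i$ if $y=pm$ with $m\in M_i$. Since $M_1\cap M_2=\emptyset$, no square is blocked in both sets.

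First I check when the two obvious assignments fail. The assignment $(a^2,b^2)\in\mathcal{A}_1\times\mathcal{A}_2$ fails only if $a^2$ is blocked in $\mathcal{A}_1$ or $b^2$ is blocked in $\mathcal{A}_2$. The swapped assignment fails only if $b^2$ is blocked in $\mathcal{A}_1$ or $a^2$ is blocked in $\mathcal{A}_2$. By disjointness, both fail only when $a$ and $b$ are both of the form $p\cdot(\text{element of } M_i)$ for the same $i$. In particular $p\mid a$ and $p\mid b$.

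The key step is to show that it suffices to find some representation $n=x^2+y^2$ with $p\nmid x$. Then $x^2$ is blocked in neither set. If $y^2$ is blocked in $\mathcal{A}_1$, it is not blocked in $\mathcal{A}_2$, so I put $x^2\in\mathcal{A}_1$ and $y^2\in\mathcal{A}_2$. Otherwise I put $y^2\in\mathcal{A}_1$ and $x^2\in\mathcal{A}_2$.

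To produce such a representation I use Gaussian integers. Since $p\equiv 1 \bmod 4$, write $p=\pi\bar\pi$ with $\pi,\bar\pi$ non-associate Gaussian primes. Write $a+bi=p^{k}z_0$ with $k\ge1$ and $p\nmid z_0$ in $\mathbb{Z}[i]$, so $n=p^{2k}|z_0|^2$. Since $p\nmid z_0$, at least one of $\pi,\bar\pi$ does not divide $z_0$; say $\bar\pi\nmid z_0$. Set $w=z_0\pi^{2k}$, so that $|w|^2=n$. The element $w$ is not divisible by $\bar\pi$, hence not by $p$. Therefore at least one coordinate of $w=x+yi$ is not divisible by $p$, and taking absolute values gives the required $x,y\ge0$.

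For part (2), assume without loss of generality that there is some $t\in\mathcal{A}_1\setminus\mathcal{S}$. Since $\mathcal{S}\subseteq\mathcal{A}_2$, we get $t+\mathcal{S}\subseteq\mathcal{S}+\mathcal{S}$. In particular $t=t+0\ge0$, so $t$ is a positive non-square. I will find $n$ with $t+n^2$ exactly divisible by a prime $q\equiv 3\bmod 4$; such a number is not a sum of two squares, a contradiction.

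It suffices to choose a prime $q\equiv 3 \bmod 4$ with $q\nmid 2t$ and $\left(\frac{-t}{q}\right)=1$, which for such $q$ is equivalent to $\left(\frac{t}{q}\right)=-1$. With such $q$, $q-t$ is a nonzero square mod $q$, so Hensel lifting gives $n$ with $n^2\equiv q-t \bmod q^2$. Then $q\,\|\,t+n^2$.

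The main obstacle is the existence of this $q$. Both conditions are conditions on $q$ through quadratic characters: the character $\chi_{-4}$ and the Kronecker character $\chi_{D}$ of $\mathbb{Q}(\sqrt t)$. I need infinitely many $q$ with $\chi_{-4}(q)=-1$ and $\chi_D(q)=-1$. Since $t>0$ is not a square, $\chi_D$ is a nontrivial real even character, while $\chi_{-4}$ is odd, so $\chi_D\neq\chi_{-4}$. The two characters are therefore independent, and Dirichlet's theorem (or Chebotarev in the biquadratic field $\mathbb{Q}(i,\sqrt t)$) gives infinitely many such primes $q$.
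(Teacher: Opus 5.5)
Your proof is correct in both parts, but each part argues differently from the paper.

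In part (1) the paper makes the same reduction to the case $p\mid x$, $p\mid y$, and then counts representations. By Jacobi's formula $r_2(n)=4\prod(b_i+1)$, the $r_2(n/p^2)$ representations with both coordinates divisible by $p$ are strictly fewer than $r_2(n)$. So some representation has both coordinates prime to $p$, and it lies in $\mathcal{A}_1\cap\mathcal{A}_2$. Your element $w=z_0\pi^{2k}$ with $\bar\pi\nmid z_0$ exhibits such a representation explicitly. It uses only the splitting $p=\pi\bar\pi$ into non-associate primes, which is the arithmetic of $\mathbb{Z}[i]$ behind Jacobi's formula anyway. Your bookkeeping with blocked squares also shows that one coordinate prime to $p$ already suffices.

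In part (2) the paper writes $c=c'k^2$ with $c'$ squarefree and splits into three cases:
\begin{enumerate}[(i)]
\item if some prime $q\equiv 3\bmod 4$ divides $c'$, it takes $x=q$;
\item if $c'\equiv 2\bmod 4$, it takes $x=1$;
\item if $c'$ is a product of primes $\equiv 1\bmod 4$, it constructs by hand (Chinese Remainder Theorem, Dirichlet, quadratic reciprocity) a prime $q\equiv 3\bmod 4$ with $\left(\frac{-c'}{q}\right)=1$ and lifts to get $q\,\|\,x^2+c'$.
\end{enumerate}
It then rescales by $k$. You handle every positive non-square $t$ at once, taking $q$ from the independence of the odd character $\chi_{-4}$ and the nontrivial even character $\chi_D$. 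This hides reciprocity inside the fact that $q\mapsto\left(\frac{t}{q}\right)$ is an even Dirichlet character, and it removes both the case split and the rescaling. The paper's route has the advantage that two of its three cases need no Dirichlet theorem at all.

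You also do not need Chebotarev. Since $\chi_{-4}$, $\chi_D$ and $\chi_{-4}\chi_D$ are all nontrivial, the map $a\mapsto(\chi_{-4}(a),\chi_D(a))$ is onto $\{\pm1\}^2$. So Dirichlet's theorem for a single residue class modulo $\mathrm{lcm}(4,D)$ is enough.
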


In particular, Theorem~\ref{thm:construction2} implies that for any $c \in [\frac{9}{10},1]$,  the set of sums of two squares can be written as a sumset $\mathcal{A}_1+\mathcal{A}_2$ with $|\mathcal{A}_i\cap [1,x]| \sim c\sqrt{x}$ for $i \in \{1,2\}$. However, there is certainly still a lot of room between our constructive example $|\mathcal{A}_i\cap [1,x]| \sim \frac{9}{10}\sqrt{x}$ from Theorem~\ref{thm:construction2} and our proven lower bound $|\mathcal{A}_i\cap [1,x]|  \gg \sqrt{x}/(\log x)^{9/2}$ from Theorem~\ref{thm:2squares}. 
 
 \subsection{A new sieve estimate modulo prime squares} While results (\ref{thm:els-harper})   and (\ref{inequ:2square}) look  quite similar, there is a core difference between their proofs: The main ingredients of (\ref{thm:els-harper}) are standard sieve bounds for sets which are missing some residue classes modulo primes. For sums of two squares, no residue classes modulo primes are entirely missing, and we must instead consider residue class restrictions modulo prime squares. This situation is far less well studied.  For the residue class configurations relevant to this problem,   results available in the literature are rather weak: the Johnsen-Selberg prime power sieve~\cite{Johnsen:1971:JSS} produces lower and upper bounds of about $x^{1/4}$ and $x^{3/4}$ for $|\mathcal{A}_i\cap [1,x]|$. In this paper we build on the  large sieve inequality to prove a new sieve bound for sets which are missing various residue classes modulo prime squares. Our result gives significantly better results than the Johnsen-Selberg prime power sieve for the relevant residue class configurations.  

\begin{defn}[Important factors used in Theorem~\ref{thm:largesieveimprovement}]\label{def:sievefactors}
Let $\mathcal{P} \subseteq \{p \in  [\tfrac{1}{2}M,M]: p \mbox{ is prime}\}$. Let $\Omega(p) \subset  \{1,\dots,p\}$ and write $\omega(p)=|\Omega(p)|$.  Assume $\omega(p) \not\in \{ 0, p\}$ for each $p \in \mathcal{P}$. Let $\Theta \in \mathbb{N}$. We define: 
\begin{align*}
&c^*(k,\omega,\mathcal{P})=\left( \sum_{p \in \mathcal{P}} \left(\frac{\omega(p)}{p-\omega(p)} \right) \right)^{-1} \left\{\frac{1}{3} \sum_{p \in \mathcal{P}} \left(\frac{\omega(p)}{p-\omega(p)} \right) - k \max_{p \in \mathcal{P}}  \left(\frac{\omega(p)}{p-\omega(p)} \right) \right\}, \\
&d^*(k,\omega,\mathcal{P})=\left( \sum_{p \in \mathcal{P}} \left(\frac{\omega(p)}{p-\omega(p)} \right) \right)^{-1} \left\{\frac{c^*(k,\omega,\mathcal{P})^k}{   2^{k+5} k} \sum_{p \in \mathcal{P}} \left(\frac{\omega(p)}{p-\omega(p)} \right) - k \max\left\{1,\max_{p \in \mathcal{P}}  \left(\frac{p-\omega(p)}{\omega(p)} \right) \right\} \right\},\\
&\xi(k,\omega,\mathcal{P},\Theta)=64\left(\frac{ 32 \Theta }{c^*(k,\omega,\mathcal{P})  d^*(k,\omega,\mathcal{P}) } \right)^k\max\left\{ \left(\frac{   d^*(k,\omega,\mathcal{P}) }{ 32 \Theta }  \right)^k,   \left(\frac{|\mathcal{P}|}{ M }\right)^k   ,  \left(  \frac{ \sum_{p \in \mathcal{P}} \left( \frac{\omega(p )}{p -\omega(p )}\right) }{M } \right)^k \right\}.
\end{align*}
\end{defn} 
  
\begin{thm}\label{thm:largesieveimprovement}
Let $ M \in (0,\infty)$ and $k \in \mathbb{N}$ with $M^k \leq N^{1/2}$.  Let $\Theta \in \mathbb{N}$.

Let $\mathcal{P} \subseteq \{p \in  [\tfrac{1}{2}M,M]: p \mbox{ is prime}\}$. 
For each  prime $p \in \mathcal{P}$, choose a non-empty set $\Omega(p) \subset  \{1,\dots,p\}$ and write $\omega(p)=|\Omega(p)|$.       Let $c^*(k,\omega,\mathcal{P})$, $d^*(k,\omega,\mathcal{P})$ and $\xi(k,\omega,\mathcal{P},\Theta)$ be as given in Definition~\ref{def:sievefactors} and  assume that these quantities are all  positive. 

\medskip 
Suppose  that $\mathcal{A} \subseteq \mathbb{N} \cap [1,N]$ has the following property: For every $p \in \mathcal{P}$ and $c\in \Omega(p)$, 
$$|\{ (a \mbox{ mod } p^2) \in \mathbb{Z}/p^2\mathbb{Z}: a \in \mathcal{A}, a \equiv c \mbox{ mod } p\}| \leq \Theta.$$
  Then the following upper bound is satisfied: 
\begin{align*}
&|\mathcal{A}| \leq  \xi(k,\omega,\mathcal{P},\Theta) \, 2N \left(  \mathop{\sum \dots \sum}_{\substack{ p_1, \dots, p_k \in \mathcal{P} \\ p_1> \dots > p_k }}  \prod_{i=1}^k \left( \frac{\omega(p_i)}{p_i-\omega(p_i)}\right) \right)^{-1}.
\end{align*}
\end{thm}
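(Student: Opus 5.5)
Our plan is to apply Montgomery's large sieve with moduli $q=p_1^2\cdots p_k^2$, $p_1>\dots>p_k$ in $\mathcal{P}$, to a carefully chosen subset of $\mathcal{A}$. Since $q\le M^{2k}\le N$, the large sieve inequality in its arithmetic form, $|\mathcal{B}|\le (N+Q^2)/L\le 2N/L$ with $Q^2=N$, applies. The difficulty is that $\mathcal{A}$ is not missing residue classes modulo $p$ or $p^2$ outright. What we know is that inside each class $c\in\Omega(p)$ it occupies at most $\Theta$ of the $p$ lifts modulo $p^2$. In the dual form of the large sieve this means the following. If $\mathcal{A}$ has a proportion $\alpha_c$ of its elements in a class $c\in\Omega(p)$, then these elements are concentrated on at most $\Theta$ classes modulo $p^2$. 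Cauchy--Schwarz then gives $\sum_{b \bmod p^2} Z(p^2,b)^2 \ge \alpha_c^2|\mathcal{A}|^2/\Theta$ for the part of the variance coming from the fibre over $c$, to be compared with the expected $|\mathcal{A}|^2/p^2$. Heuristically, each fibre of $\Omega(p)$ that carries mass contributes a "saving" of size about $p/\Theta$ relative to uniform distribution.

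Step 1 (structure). For each $p$, split $\mathcal{A}$ according to whether it has substantial mass on $\Omega(p)$. Either the proportion of $\mathcal{A}$ lying in $\Omega(p)$ modulo $p$ is at least a constant times $\omega(p)/p$, or it is smaller. In the smaller case $\mathcal{A}$ is deficient modulo $p$, and the ordinary large sieve modulo $p$ (with the factor $\omega(p)/(p-\omega(p))$) already gains. This dichotomy explains the weights $\omega(p)/(p-\omega(p))$. The quantities $c^*$ and $d^*$ enter through a pigeonhole/averaging argument. Let $\mathcal{P}'$ be the set of primes where the good alternative holds. Their weighted mass $\sum_{p\in\mathcal{P}'}\omega(p)/(p-\omega(p))$ is at least a third of the total minus $k$ times the maximal term, so that $k$-tuples of distinct good primes still carry a $c^{*k}$ fraction of the full $k$-fold sum. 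This motivates the $1/3$ and the $-k\max$ terms in $c^*$.

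Step 2 (multiplicative combination). For $q=p_1^2\cdots p_k^2$, use the Chinese remainder theorem to define a set $\mathcal{A}_q\subseteq\mathcal{A}$. It consists of the elements lying in $\Omega(p_i)$ modulo $p_i$ for all $i$, so it occupies at most $\Theta^k\prod\omega(p_i)$ classes modulo $q$ among those lying over $\prod\Omega(p_i)$. To make a single subset work for all $q$ simultaneously, choose a random, or dyadically pigeonholed, subset $\mathcal{B}\subseteq\mathcal{A}$ of proportion about $d^{*k}$. Choose it so that for many $k$-tuples the local deficiency is uniform. The factor $2^{k+5}k$ and the $\max\{1,(p-\omega)/\omega\}$ term in $d^*$ are the losses in this averaging. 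Then compute the local factors $h(q)=\prod h(p_i^2)$ in Montgomery's inequality, using Cauchy--Schwarz fibre by fibre. Each $p_i$ contributes at least roughly $\frac{d^*}{32\Theta}\cdot\frac{\omega(p_i)}{p_i-\omega(p_i)}\cdot p_i$, although the precise normalisation needs checking. Summing over $p_1>\dots>p_k$ produces the $k$-fold sum in the statement. The three-way maximum in $\xi$ records which of three regimes dominates: the $\Theta$-bounded fibre contribution, the count $|\mathcal{P}|/M$ of primes (coming from the $p_i\asymp M$ normalisation), and the total weight divided by $M$.

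Step 3 (conclusion). Combine: $|\mathcal{B}|\le 2N/L$ with $L\gtrsim (c^*d^*/(32\Theta))^k\cdot(\cdot)^{-k}\sum\prod\omega/(p-\omega)$, and $|\mathcal{A}|\ll|\mathcal{B}|$ up to the constant $64$. The main obstacle will be Step 2. Montgomery's sieve needs a lower bound on $\sum_b Z(q,b)^2$ that is multiplicative in the $p_i$, but the concentration information holds only fibrewise and only for the good part of $\mathcal{A}$ at each prime. The sets of good elements differ between primes, so it is not clear that a single subset can be good for all $k$ primes at once. We would first try to handle this with an iterated Cauchy--Schwarz through the tuple $p_1>\dots>p_k$, conditioning successively on one prime at a time. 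Each step loses a factor of about $2$, which would account for the $2^{k}$ in $d^*$.
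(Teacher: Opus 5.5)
\textbf{There is a genuine gap: the plan fails at its first step.}

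The large sieve over moduli $q\le Q$ has constant $N+Q^2$. Your moduli $q=(p_1\cdots p_k)^2$ can be as large as $M^{2k}\le N$, so they force $Q\approx N$ and a constant of order $N^2$. Writing ``$Q^2=N$'' silently replaces the condition $q\le Q$ by $q\le Q^2$. With $Q=N^{1/2}$, only square moduli with $p_1\cdots p_k\le N^{1/4}$ are admissible. Since $p_1\cdots p_k\ge (M/2)^k$, nothing survives when $M^k$ is close to $N^{1/2}$. That is exactly the regime the theorem is about, and the one used later with $M^k=x^{1/2}$. This restriction is why the Johnsen--Selberg sieve only gives about $N^{3/4}$ in the example of \S\ref{ssec:largesievecomparison}.

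Independently of the range, your fibrewise ``saving of size $p/\Theta$'' is not a uniform local factor. By Lemma~\ref{lem:largesieve.denominator}, the modulus $p^2$ contributes $p^2\sum_{h \bmod p^2}Z(p^2,h)^2-p\sum_{h \bmod p}Z(p,h)^2$. This is of size $(\omega(p)/\Theta)|\mathcal{A}|^2$ only if $\mathcal{A}$ puts roughly its fair share $\omega(p)/p$ of mass on $\Omega(p)$. It vanishes if $\mathcal{A}$ avoids $\Omega(p)$ and is evenly spread over the lifts of the remaining classes. So the claimed per-prime factor $\gg p_i\,\omega(p_i)/(\Theta(p_i-\omega(p_i)))$ cannot hold in general. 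Step~2 (a random subset of density $d^{*k}$, iterated Cauchy--Schwarz) is, as you concede, not yet an argument. Your readings of $c^*$, $d^*$ and $\xi$ are reverse-engineered from the formulas and do not reflect where these constants actually come from.

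\textbf{The missing idea.} Square moduli should only be used for trivial counting, inside a dichotomy over whole $k$-tuples rather than single primes.

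\emph{Case 1: some $k$-tuple is jointly overrepresented}, that is, $\#\{n\in\mathcal{A}: n \bmod p_i\in\Omega(p_i)\mbox{ for all } i\}>\widetilde{\varepsilon}\prod_i(\omega(p_i)/p_i)\,|\mathcal{A}|$. These elements occupy at most $\Theta^k\prod_i\omega(p_i)$ classes modulo $(p_1\cdots p_k)^2\le N$. Each class contains at most $2N/(p_1\cdots p_k)^2$ elements, so $|\mathcal{A}|\le 2N\Theta^k/(\widetilde{\varepsilon}\,p_1\cdots p_k)$ (Lemma~\ref{lem:proof.prop1.common}). This is the only place a square modulus enters, and no large sieve is involved.

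\emph{Case 2: every $k$-tuple is jointly underrepresented.} Then Linnik's large sieve is applied with the squarefree moduli $p_1\cdots p_k\le M^k\le N^{1/2}$, which are admissible with constant $2N$. The real work is a lower bound for the dual sums when the classes in $\Omega(p_i)$ are rare rather than absent:
\begin{itemize}
\item Lemma~\ref{lem:usedinlargesieve} shows that averaging the distribution over $\Omega(p_i)$ and over its complement can only decrease the dual sum at $d=p_1\cdots p_k$.
\item This reduces everything to the $2^k$ counts $\Lambda(\underline{e})$ (Corollary~\ref{cor:denominator.rearrangement}).
\item The moment bounds of Lemma~\ref{lem:lem10proof.inter} then control the elements $n$ lying in $\Omega(p)$ for many $p$.
\end{itemize}
The constants $c^*$ and $d^*$ come from the thresholds $\delta_1=1/3$ and $\delta=c^{*k}/(3k2^{k+3})$ in that argument. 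The maximum in $\xi$ simply merges the two cases through $1/\widetilde{\varepsilon}$.

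Your per-prime dichotomy in Step~1 cannot substitute for this. Overrepresentation at a single prime only yields $|\mathcal{A}|\ll N\Theta/M$ instead of $N\Theta^k/M^k$. Deficiency modulo each $p_i$ separately does not imply joint deficiency modulo $p_1\cdots p_k$. Nor does it make Montgomery's inequality applicable, since that inequality needs the classes to be missing, not merely rare.
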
 
  In practice, the assumptions $c^*(k,\omega,\mathcal{P}) >0$  and $d^*(k,\omega,\mathcal{P}) > 0$  are only minor technical restrictions.   The condition  $c^*(k,\omega,\mathcal{P}) >0$ fails if  there are   a few values of $p$ for which $(\frac{\omega(p)}{p-\omega(p)} )$ is much larger than average. Similarly, if $ (\frac{\omega(p)}{p-\omega(p)}  )$  is small for many $p$, and there are some very large $ (\frac{p-\omega(p)}{\omega(p)} )$, then $d^*(k,\omega,\mathcal{P}) > 0$ fails. Therefore, when applying Theorem~\ref{thm:largesieveimprovement} it may be necessary to remove from $\mathcal{P}$ some choices  of $p$ for which  $\omega(p)$ is either much larger or much smaller than average. Since these conditions make  Theorem~\ref{thm:largesieveimprovement} look  a little technical, we  record below a simplified version for the case where all $\omega(p)$ are contained in an interval $[c_2p,c_3p]$ and where $|\mathcal{P}|$ is of size at least $\frac{c_1 M }{\log M}$.

\begin{cor}\label{cor:john-selb-improvement}
Let $0<c_1<c_2<c_3<1$. Let $ M \in (0,\infty)$ and $k \in \mathbb{N}$ with $M^k \leq N^{1/2}$. Let $\Theta \in \mathbb{N}$.

 Let $\mathcal{P} \subseteq \{p \in  [\tfrac{1}{2}M,M]: p \mbox{ is prime}\}$ with $|\mathcal{P}| \geq \frac{c_1 M }{\log M}$.

For each  prime $p \in \mathcal{P}$, choose a non-empty set $\Omega(p) \subset  \{1,\dots,p\}$ and write $\omega(p)=|\Omega(p)|$.

Assume that for every $p \in \mathcal{P}$, we have $\omega(p) \in [c_2p,c_3p]$.  

Suppose   $\mathcal{A} \subseteq \mathbb{N} \cap [1,N]$  and for every $p \in \mathcal{P}$ and $c \in \Omega(p)$,   $|\{\overline{a} \in \mathbb{Z}/p^2\mathbb{Z}: a \in \mathcal{A}, a \equiv c \mbox{ mod } p\}| \leq \Theta.$
  Then 
\begin{align*}
&|\mathcal{A}| \ll_{k,\Theta,c_1,c_2,c_3} 2N \left(  \mathop{\sum \dots \sum}_{\substack{ p_1, \dots, p_k \in \mathcal{P} \\ p_1> \dots > p_k }}  \prod_{i=1}^k \left( \frac{\omega(p_i)}{p_i-\omega(p_i)}\right) \right)^{-1} \ll_{k,\Theta,c_1,c_2,c_3} \frac{N (\log M)^k}{M^k}.
\end{align*} 
$($Here $\ll_{k,\Theta,c_1,c_2,c_3}$ indicates a big O constant independent of $N$, but possibly dependent on $k$, $\Theta$ and $c_i$.$)$
\end{cor}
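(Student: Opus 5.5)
The corollary should follow from Theorem~\ref{thm:largesieveimprovement} once we check its hypotheses uniformly. The plan is to show that, under $\omega(p)\in[c_2p,c_3p]$ and $|\mathcal{P}|\geq c_1M/\log M$, the quantities $c^*$, $d^*$ and $\xi$ are bounded below and above by constants depending only on $k,\Theta,c_1,c_2,c_3$, for $M$ large. We then bound the $k$-fold sum from below.

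\emph{The factors.} Set $r_p=\omega(p)/(p-\omega(p))$. The assumption on $\omega(p)$ gives
\[
\frac{c_2}{1-c_2}\le r_p\le \frac{c_3}{1-c_3}.
\]
Write $\Sigma=\sum_{p\in\mathcal{P}} r_p$. Then $\Sigma\asymp|\mathcal{P}|$, and since $|\mathcal{P}|\le \pi(M)$ we get $M/\log M\ll \Sigma\ll M/\log M$.

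\emph{Positivity of $c^*$ and $d^*$.} We have
\[
c^*=\tfrac13-k\max_p r_p/\Sigma\ge \tfrac13-O_{k,c_i}(\log M/M),
\]
so $c^*\ge 1/4$ once $M\ge M_0(k,c_i)$. Also $c^*\le 1/3$. Similarly,
\[
d^*=\frac{(c^*)^k}{2^{k+5}k}-k\max\Bigl\{1,\tfrac{1-c_2}{c_2}\Bigr\}\Big/\Sigma .
\]
This is at least $4^{-k}/(2^{k+6}k)>0$ and at most $1$ for large $M$.

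\emph{Bounding $\xi$.} In the maximum defining $\xi$, the first term is $(d^*/32\Theta)^k\le 1$. The second term is $(|\mathcal{P}|/M)^k\le 1$, and the third term $(\Sigma/M)^k$ is $O_{c_i}(1)$. Hence
\[
\xi\le 64\,(32\Theta/(c^*d^*))^k\cdot O(1)=O_{k,\Theta,c_i}(1),
\]
and $\xi>0$ trivially. Applying Theorem~\ref{thm:largesieveimprovement} now gives the first inequality for $M\ge M_0$.

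\emph{Small $M$.} For $M<M_0$ (a constant), $N$ is not constrained by the theorem. There the claimed bound follows from the trivial $|\mathcal{A}|\le N$: the $k$-fold sum is either empty, or bounded above by a constant depending on $k,c_i$, so its reciprocal is bounded below. If the sum is empty (fewer than $k$ primes in $\mathcal{P}$), the right-hand side should be read as $+\infty$. For large $M$ it is nonempty, since $|\mathcal{P}|\gg M/\log M\ge k$.

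\emph{The $k$-fold sum.} For the second inequality we bound the sum below. The ordered sum over distinct $p_1>\dots>p_k$ equals $\frac1{k!}$ times the sum over distinct ordered tuples. This is at least
\[
\frac{1}{k!}\Bigl(\frac{c_2}{1-c_2}\Bigr)^k |\mathcal{P}|(|\mathcal{P}|-1)\cdots(|\mathcal{P}|-k+1)\gg_{k,c_i}\Bigl(\frac{M}{\log M}\Bigr)^k
\]
for large $M$. Therefore $2N\cdot(\text{sum})^{-1}\ll N(\log M)^k/M^k$.

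The only delicate point is the uniformity near the small-$M$ threshold. Everything else is routine bookkeeping, with care to track that all constants depend only on $k,\Theta,c_1,c_2,c_3$.
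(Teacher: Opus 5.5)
Your proposal is correct and follows the route the paper intends. The paper gives no separate proof of this corollary and treats it as a direct specialisation of Theorem~\ref{thm:largesieveimprovement}, obtained by checking that $c^*(k,\omega,\mathcal{P})$, $d^*(k,\omega,\mathcal{P})$ and $\xi(k,\omega,\mathcal{P},\Theta)$ are bounded above and below by positive constants depending only on $k,\Theta,c_1,c_2,c_3$; this is the same verification carried out, for $\omega(p)\asymp p/\log p$, in the proof of Lemma~\ref{lem:size.step3}. Your explicit handling of bounded $M$ via $|\mathcal{A}|\leq N$ is a detail the paper leaves implicit, and the same trivial argument also covers the second inequality for bounded $M$, which you only state for large $M$.
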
 

 Theorem~\ref{thm:largesieveimprovement} is the key ingredient of our proof of Theorem~\ref{thm:2squares}. However, this   sieve inequality is actually considerably more general than what we  really need to prove Theorem~\ref{thm:2squares}. We discuss the   strength of this sieve estimate below.

 \subsubsection{Comparison with the Johnsen-Selberg prime power sieve} \label{ssec:largesievecomparison}

To provide context for our results, we first discuss the Johnsen-Selberg prime power sieve.  
 This sieve is a more general version of the  standard Selberg sieve, which considers not only residue classes in $\mathbb{Z}/p\mathbb{Z}$, but also uses information about the distribution of $\mathcal{A}$ in $\mathbb{Z}/p^k\mathbb{Z}$ for $k\geq 2$. It first appears in a paper of Selberg~\cite{Selberg:1977:RMF}, which improves on earlier work of Johnsen~\cite{Johnsen:1971:JSS}. (See Chapter I.4 of Tenenbaum's book~\cite{Tenenbaum:2015:APN} for a nice exposition.) If we only consider primes and prime squares, the Johnsen-Selberg sieve bound may be stated as follows: 
\begin{lemma}[Johnsen-Selberg sieve]\label{lem:johnsen} For each prime $p$, let $\Omega(p) \subseteq \mathbb{Z}/p\mathbb{Z}$ and $\Omega(p^2)  \subseteq \mathbb{Z}/p^2\mathbb{Z}$ with $$\{n \in \mathbb{Z}: (n \mbox{ \rm mod } p) \in \Omega(p)\} \cap \{n \in \mathbb{Z}: (n \mbox{ \rm mod } p^2) \in \Omega(p^2)\}  = \emptyset.$$ Let $\omega(p^k)=|\Omega(p^k)|$ for $k \in \{1,2\}$.

Suppose $\mathcal{A} \subseteq [1,N] \cap \mathbb{N}$ with $\{a \in \mathcal{A}: (a \mbox{ \rm mod } p^k) \in \Omega(p^k)\} = \emptyset$ for $k \in \{1,2\}$.  Then
\begin{align*}
|\mathcal{A}| \leq 2N \left(\sum_{d \leq N^{1/2}}^*\prod_{p \mid\mid d} \left(\frac{\omega(p)}{p-\omega(p)} \right)\prod_{p^2 \mid\mid d} \left(\frac{p \omega(p^2)}{(p^2-p\omega(p)-\omega(p^2))(p-\omega(p))} \right) \right)^{-1}, 
\end{align*}
where  $\sum^*$ denotes summation over $d$  with $p^3 \nmid d$ for all primes $p$, and where   $p^k \mid\mid d$ means that $p^k$ is the largest power of $p$ which divides $d$.
\end{lemma}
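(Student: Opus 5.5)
The plan is to derive the lemma from the arithmetic form of the large sieve, in the style of Montgomery, using moduli $q\le Q:=N^{1/2}$ with $p^3\nmid q$. The starting point is the analytic large sieve inequality: for $S(\alpha)=\sum_{a\in\mathcal{A}}e(a\alpha)$,
\begin{align*}
\sum_{q\le Q}\ \sum_{\substack{b \bmod q\\ (b,q)=1}}\left|S\left(\tfrac{b}{q}\right)\right|^2\le (N+Q^2)\,|\mathcal{A}| = 2N|\mathcal{A}|.
\end{align*}
It then suffices to show, for every cube-free $q$, the lower bound
\begin{align*}
\sum_{(b,q)=1}\left|S\left(\tfrac{b}{q}\right)\right|^2\ \ge\ h(q)\,|\mathcal{A}|^2 .
\end{align*}
Here $h$ is the multiplicative function with $h(p)=\omega(p)/(p-\omega(p))$ and $h(p^2)=p\,\omega(p^2)/\bigl((p^2-p\omega(p)-\omega(p^2))(p-\omega(p))\bigr)$. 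Summing over $q\le Q$ and dividing by $|\mathcal{A}|$ then gives exactly the stated bound.

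First, I would record the local densities. Set $T(p)=p-\omega(p)$ and $T(p^2)=p^2-p\omega(p)-\omega(p^2)$. By the disjointness hypothesis these are the numbers of residue classes mod $p$, respectively mod $p^2$, that $\mathcal{A}$ may occupy. A direct computation gives
\begin{align*}
1+h(p)=\frac{p}{T(p)},\qquad 1+h(p)+h(p^2)=\frac{p^2}{T(p^2)}.
\end{align*}
This identity is the bookkeeping that makes the claimed $h(p^2)$ the right factor. Second, for prime-power moduli I would use Parseval on $\mathbb{Z}/p^j\mathbb{Z}$. With $Z(p^j,r)=|\{a\in\mathcal{A}:a\equiv r \bmod p^j\}|$,
\begin{align*}
\sum_{b \bmod p^j}\left|S\left(\tfrac{b}{p^j}\right)\right|^2=p^j\sum_r Z(p^j,r)^2 .
\end{align*}
The primitive part at $p^2$ is the difference of the $j=2$ and $j=1$ totals. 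Third, I would pass from prime powers to general cube-free $q$ by the standard multiplicativity argument (Montgomery's lemma, via the Chinese remainder theorem and shifting $\mathcal{A}$ by additive characters of the coprime part). This shows that a local inequality for every translate-weighted version of $\mathcal{A}$ at each prime power multiplies up to $q$.

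The main obstacle is the local inequality at $p^2$. One must bound
\begin{align*}
p^2\sum_r Z(p^2,r)^2-p\sum_s Z(p,s)^2\ \ge\ h(p^2)\Bigl(\sum_s Z(p,s)\Bigr)^2
\end{align*}
when the number $t_s$ of admissible lifts of an admissible class $s \bmod p$ varies with $s$. Cauchy--Schwarz inside each fibre gives the lower bound $\sum_s Z(p,s)^2\,(p^2/t_s-p)$, but minimising this over the $Z(p,s)$ alone yields a constant depending on the $t_s$, not just on $\omega(p^2)$. My first attempt would therefore avoid proving the $p^2$ inequality in isolation. Instead I would prove the combined inequality for the partial sum over $d\in\{1,p,p^2\}$, namely $p^2\sum_r Z(p^2,r)^2\ge (p^2/T(p^2))\,|\mathcal{A}|^2$, which is immediate from Cauchy--Schwarz and the identity above.

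I would then organise the large sieve sum so that only these cumulative quantities are needed. Concretely, I would run Selberg's $\Lambda^2$ dual form with test functions on $\mathbb{Z}/q\mathbb{Z}$, as in Tenenbaum I.4, where the optimal weights produce $\sum^*_{d\le\sqrt N}\prod h$ in the denominator. If that reorganisation fails for uneven $t_s$, the fallback is to restrict to the extremal configuration where the constant is worst and check that $h(p^2)$ as stated is still a valid lower bound there.
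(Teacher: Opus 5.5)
The paper gives no proof of this lemma; it is quoted from Johnsen and Selberg, with Tenenbaum as reference. Your plan therefore has to stand on its own, and as written it has a genuine gap: the step that would actually prove the lemma is only announced.

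\textbf{The opening reduction is false.} Your reduction is not merely hard to prove. Take $\omega(p)=0$ and $\Omega(p^2)=\{0\}$, so $h(p)=0$ and $h(p^2)=1/(p^2-1)>0$. Let $\mathcal{A}=\{n\le N: n\equiv 1 \bmod p\}$ with $p^2\mid N$. Then $\mathcal{A}$ is equidistributed over the $p$ lifts of $1$ modulo $p^2$, so $S(b/p^2)=0$ whenever $p\nmid b$. The primitive part at $q=p^2$ therefore vanishes, although $h(p^2)|\mathcal{A}|^2>0$.

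The same example disposes of your fallback. The extremal configuration (all mass on an admissible class mod $p$ whose whole fibre is admissible) gives the constant $0$, not $h(p^2)$. So the Montgomery multiplicativity step has no local $p^2$ input. Your cumulative bound $p^2\sum_r Z(p^2,r)^2\ge (p^2/T(p^2))|\mathcal{A}|^2$ is correct, but it does not by itself control $\sum^*_{d\le N^{1/2}}$, where towers $\{1,p,p^2\}$ are truncated and mixed with other primes. Whether the Selberg dual ``works for uneven $t_s$'' is exactly the question you leave open.

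\textbf{The missing content.} The Selberg dual does work, for the following reason. For each $p$, orthogonalise $1$, $u_1=1_{\Omega(p)}$, $u_2=1_{\Omega(p^2)}$ in the uniform probability measure on $\mathbb{Z}/p^2\mathbb{Z}$:
\[
e_{p,1}=u_1-\frac{\omega(p)}{p},\qquad e_{p,2}=u_2-\frac{\omega(p^2)}{pT(p)}(1-u_1).
\]
\begin{itemize}
\item The disjointness hypothesis ($u_1u_2=0$) makes $1,e_{p,1},e_{p,2}$ orthogonal.
\item On admissible $n$ these functions are constant, with values $a_{p,1}=-\omega(p)/p$ and $a_{p,2}=-\omega(p^2)/(pT(p))$.
\item A direct computation gives $a_{p,1}^2/\|e_{p,1}\|^2=h(p)$ and $a_{p,2}^2/\|e_{p,2}\|^2=h(p^2)$.
\end{itemize}
When fibres are uneven, $e_{p,2}$ is orthogonal only to $1$ and $u_1$, not to all $p$-periodic functions, so it is not primitive mod $p^2$. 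Primitivity is not needed. Set $e_d=\prod_{p^\alpha\mid\mid d}e_{p,\alpha}$ and $\theta=\sum^*_{d\le N^{1/2}}c_de_d$. Each $e_d$ is $d$-periodic, so every frequency of $\theta$ has denominator at most $N^{1/2}$. The dual large sieve, together with Parseval and CRT-orthogonality, then gives $\sum_{n\le N}\theta(n)^2\le 2N\sum_d c_d^2\|e_d\|^2$.

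Now put $a(d)=\prod a_{p,\alpha}$ and $G=\sum^*_{d\le N^{1/2}}h(d)$. Choose $c_d=a(d)/(G\|e_d\|^2)$ when $h(d)>0$, and $c_d=0$ otherwise. Then $\theta=1$ on every element of $\mathcal{A}$ and $\sum_d c_d^2\|e_d\|^2=1/G$. Hence $|\mathcal{A}|\le\sum_{n\le N}\theta(n)^2\le 2N/G$.

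\textbf{A primal alternative.} You can also assemble your tower inequalities directly, by induction on primes, with $P(m)=\sum_{(b,m)=1}|S(b/m)|^2$:
\begin{itemize}
\item The set of moduli $d\le N^{1/2}$ is divisor-closed. So for each $p$-free $m$, the exponents $i$ with $mp^i\le N^{1/2}$ form an initial segment $\{0,\dots,J(m)\}$.
\item Apply the tower inequality to the twisted weights $1_{\mathcal{A}}(n)e(nb/m)$. This bounds $\sum_{i\le J(m)}P(mp^i)$ below by $\bigl(\sum_{j\le J(m)}h(p^j)\bigr)P(m)$.
\item Regroup as $\sum_j h(p^j)\sum_{J(m)\ge j}P(m)$. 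Since $h(p^j)\ge 0$ and each level set $\{m: J(m)\ge j\}$ is divisor-closed, the induction hypothesis finishes the argument.
\end{itemize}
Your proposal contains neither mechanism.
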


This sieve estimate was for instance used by   Cochrane and Dressler~\cite{Cochrane:1987:CSS} in their work on consecutive triples of sums of squares and in Nowak's work on arithmetic progressions of sums of squares~\cite{Nowak:2005:DMB}. Further, referencing Gallagher's paper~\cite{Gallagher:1974:SPP},  van Doorn and Tao~\cite{Doorn:2026:GSP} reproved   a version of the Johnsen-Selberg sieve  and used it in their work on   Erdős problems 1102 and 1103 (numbering according to the Erdős problem website).  

\medskip
Now  consider the case where $\mathcal{A} \subseteq [1,N] \cap \mathbb{N}$ with $|\{\overline{a} \in \mathbb{Z}/p^2\mathbb{Z}: a \in \mathcal{A}, a \equiv c \mbox{ mod } p\}|=1$ for every $p \in [\frac{1}{2}N^{1/4},N^{1/4}]$ and every $c \in \{1, \dots, \frac{p+1}{2}\}$. That is, the set $\mathcal{A}$ avoids all but one residue class mod $p^2$ for every residue class $c \in \{1, \dots, \frac{p+1}{2}\}$ mod $p$. The Johnsen-Selberg sieve   gives upper bound
\begin{align*} 
 |\mathcal{A}| \leq 2N \,\,\left(\sum_{ p \in [\frac{1}{2}N^{1/4},N^{1/4}]}   \left(1-\tfrac{2}{p^2+1} \right) \right)^{-1} \ll N^{3/4} \log(N).
\end{align*}
On the other hand, if  $\mathcal{B} \subseteq [1,N] \cap \mathbb{N}$ has the property that $\{b \in \mathcal{B}: b \equiv c \mbox{ mod } p\} = \emptyset$ for every $p \in [\frac{1}{2}N^{1/4},N^{1/4}]$ and every $c \in \{1, \dots, \frac{p+1}{2}\}$, then the ordinary Selberg sieve or, equivalently, Montgomery's large sieve are applicable. (These are   Lemma~\ref{lem:johnsen} with $\Omega(p^2)=\emptyset$ for all $p$ or Lemma~\ref{lem:largesieve.montgomery}.) We have the bound
\begin{align*} 
 |\mathcal{B}| \leq 2N \,\,\left(\sum_{ p_1,p_2 \in [\frac{1}{2}N^{1/4},N^{1/4}]}  \mu(p_1p_2)^2 \left(1+\tfrac{2}{p_1-1}\right) \left(1+\tfrac{2}{p_2-1}\right) \right)^{-1} \ll N^{1/2} \log(N)^2.
\end{align*}
In short, the Johnsen-Selberg sieve gives a much weaker bound on $\mathcal{A}$ (which avoids all but one residue class mod $p^2$ for many residue classes  $c$ mod $p$)  than on $\mathcal{B}$ (which avoids all residue classes mod $p^2$ for many residue classes  $c$ mod $p$). However, intuitively, not many elements of $\mathcal{A}$ should be in these very restricted residue classes $c$ mod $p$. We confirm this suspicion:  Theorem~\ref{thm:largesieveimprovement} gives the  improved bound  
\begin{align*} 
 |\mathcal{A}| \ll N^{1/2} \log(N)^2,
\end{align*}
which is now  of the same size as the bound on $|\mathcal{B}|$ (up to a constant factor). This improved bound  plays a key role in our treatment of asymptotic additive decompositions of sums of two squares.  More generally,   Theorem~\ref{thm:largesieveimprovement} gives much stronger bounds than the Johnsen-Selberg prime power sieve when $|\{\overline{a} \in \mathbb{Z}/p^2\mathbb{Z}: a \in \mathcal{A}, a \equiv c \mbox{ mod } p\}|$ is  small for many values of $c$.

\subsubsection{Comparison with Montgomery's large sieve}  In particular, Corollary~\ref{cor:john-selb-improvement}  tells us that if $M^k \leq N^{1/2}$ and  $\mathcal{P} \subseteq  [\tfrac{1}{2}M,M]$ with $|\mathcal{P}| \geq \frac{c_1M}{\log M}$  and $\omega(p) \in [c_2p,c_3p]$ for every $p\in \mathcal{P}$, then the residue class restriction $|\{\overline{a} \in \mathbb{Z}/p^2\mathbb{Z}: a \in \mathcal{A}, a \equiv c \mbox{ mod } p\}| \leq \Theta$ for $p \in \mathcal{P}$ and $c\in \Omega(p)$ implies  
\begin{align*}
&|\mathcal{A}| \ll_{k,\Theta,c_1,c_2,c_3} 2N \left(  \mathop{\sum \dots \sum}_{\substack{ p_1, \dots, p_k \in \mathcal{P} \\ p_1> \dots > p_k }}  \prod_{i=1}^k \left( \frac{\omega(p_i)}{p_i-\omega(p_i)}\right) \right)^{-1}.
\end{align*} 
This upper bound differs from Montgomery's large sieve only by a constant factor dependent on $k$, $\Theta$ and $c_1, c_2,c_3$. (See Lemma~\ref{lem:largesieve.montgomery} below.) This means that if we choose some residue classes $\Omega(p)$ modulo $p$, and remove most of the corresponding residue classes modulo $p^2$, we get upper bounds of nearly the same strength as if we remove $\Omega(p)$ entirely.

\medskip
 However, one drawback of our  new sieve estimate is that we are restricted to considering products of  primes $p$  which are contained in some dyadic interval $ [\tfrac{1}{2}M,M]$. The standard Selberg sieve or large sieve would allow us to use any product of primes less than $N^{1/2}$. This small defect is the reason why we obtain the bound  $\sqrt{x}/(\log x)^{9/2} \ll |\mathcal{A}_i\cap [1,x]| \ll \sqrt{x} (\log x)^4$  for non-trivial additive decomposition of sums of two squares, whereas  Elsholtz and Harper~\cite{Elsholtz:2015:ADR} proved the stronger bound $
\sqrt{x}/(\log x \log \log x) \ll |\mathcal{B}_i\cap [1,x]| \ll \sqrt{x} \log \log x $   for asymptotic additive decompositions of the primes.

\subsubsection{The large sieve inequality for square moduli} We conclude this expository section with a brief discussion of the large sieve inequality for square moduli.   For any sequence $(a_n)$ and any integers $M$ and $N$, work of Davenport and Halberstam~\cite{Davenport:1966:VTP} gives
 \begin{align}\label{inequ:squarelargesieve}
        \sum_{\substack{d \leq D \\ d\in \mathcal{S}}} \sum_{\substack{a=1 \\ (a,d)=1}}^d \left|\sum_{n=M+1}^{M+N} a_n \exp\left(\frac{2 \pi i n a}{d} \right) \right|^2 \ll  (N+D^2) \sum_{n=M+1}^{M+N} |a_n|^2.
    \end{align}
There are many   papers dedicated to replacing the factor $N+D^2$ by a smaller quantity. See for instance work of Zhao~\cite{Zhao:2004:LSS}, Baier~\cite{Baier:2006:OLS}, Baier and Zhao~\cite{Baier:2008:LSS}, Halupczok~\cite{Halupczok:2011:ANB} and Baker, Munsch and Shparlinski\cite{Baker:2022:AEL}. 
A more detailed exposition   can be found in Baier's paper~\cite{Baier:2026:LSS}.   Inequalities like~(\ref{inequ:squarelargesieve}) can be used to prove variants of the Johnsen-Selberg sieve for square moduli. Indeed, very recently Ramar{\'e}~\cite{Ramare:2026:WLP} improved on the weighted large sieve inequality and derived a corresponding version of the Johnsen-Selberg sieve.

\medskip  Since  Theorem~\ref{thm:largesieveimprovement}  involves  residue classes modulo prime squares, it is tempting to try to use the results discussed above in its proof. However, any resulting sieve inequality would involve summing over prime squares. If we take $D=N^{1/2}$, this immediately restricts our summation to a sum over no more than $O(N^{1/4}/\log(N))$ choices of $d$. By comparison, if we instead sift modulo primes, we can sum over $O(N^{1/2}/\log(N))$ choices of $d$. This is the reason why the Johnsen-Selberg sieve gives such bad bounds for the residue class configurations discussed in Section~\ref{ssec:largesievecomparison}. As such, our approach to proving  Theorem~\ref{thm:largesieveimprovement} instead builds on the standard Linnik's large sieve inequality (see Lemma~\ref{lem:largesieve.linnik}) rather than the large sieve inequality for square moduli.
 
\subsection{Additional context} 
 The work in this paper predates OpenAI's resolution of Ostmann's problem. However, while our initial motivation for   studying sums of squares was Ostmann, we believe that our new sieve estimate is interesting in its own right.

\subsection{Structure of the paper} 
In Section~\ref{sec:sieve} we prove the sieve estimate given in Theorem~\ref{thm:largesieveimprovement}. The starting point is Linnik's  large sieve inequality, which we carefully manipulate to obtain good upper bounds for the relevant residue class configurations.\medskip

In Section~\ref{sec:ostmannproofs} we combine Theorem~\ref{thm:largesieveimprovement} with Gallagher's larger sieve to  prove  Theorem~\ref{thm:2squares}  on additive decompositions of squares. The proof strategy is similar to the method described in~\cite{Elsholtz:2015:ADR} for asymptotic additive decompositions of primes.    As a corollary, we deduce that $\mathcal{S}+\mathcal{S}$ cannot be written as a sum of three different sets in a non-trivial way.

\medskip
In Section~\ref{sec:constructions} we discuss concrete alternative ways of writing $\mathcal{S}+\mathcal{S}$ as a sumset. We prove Theorem~\ref{thm:construction2}, which shows that suitable sumsets can be constructed by deleting a positive proportion of the squares, and that  no additional elements can be added to the squares.

\subsection{Notation} 

Below is a list of notation and terminology used throughout this paper.

\medskip
In this paper, lower case Roman letters  usually denote integers, and letters $p$ and $p_i$ denote primes.

 $1_{\mathcal{A}}(n)$ is the indicator function of set $\mathcal{A}$. Both $|\mathcal{A}|$ and $\#\mathcal{A}$ denote the number of elements of $\mathcal{A}$.
 
 Given a set $\mathcal{A}$, we let $\mathcal{A}(N)= \mathcal{A} \cap [1,N]$.  
 $\mathcal{S}$ is the set of squares. 
 $\mu(n)$ is the Möbius function.

 \medskip
 We   use Vinogradov  notation ($\ll$ and $\gg$) and write $X \asymp Y$ if $Y \ll X \ll Y$. If we write $X \ll_A Y$, then the implied constant may depend on $A$.   Throughout the paper, $x$ is assumed to be large.

\subsection{Standard sieve inequalities}

Several standard sieve inequalities are recorded here for later use.

\begin{lemma}[Linnik's large sieve inequality]\label{lem:largesieve.linnik} Let  $\mathcal{A} \subseteq [1,N] \cap \mathbb{N}$. Consider $(a_n)$ with $a_n \in \mathbb{C}$. Then  
 \begin{align*}
        \sum_{d \leq N^{1/2}} \sum_{\substack{a=1 \\ (a,d)=1}}^d \left|\sum_{n\leq N} a_n \exp\left(\frac{2 \pi i n a}{d} \right) \right|^2 \leq 2 N \sum_{n=1}^N |a_n|^2.
    \end{align*}
\end{lemma}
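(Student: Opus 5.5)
This is the classical arithmetic large sieve. I would derive it from the analytic large sieve inequality for well-spaced points, with the sharp constant $N-1+\delta^{-1}$. The main point is that the Farey fractions $a/d$ with $d\le Q:=N^{1/2}$ and $(a,d)=1$ are well spaced on $\mathbb{R}/\mathbb{Z}$. If $a/d\neq a'/d'$ then
\[
\left\|\frac{a}{d}-\frac{a'}{d'}\right\| = \frac{\|ad'-a'd\|_{\mathbb{Z}}}{dd'}\ \ge\ \frac{1}{dd'}\ \ge\ \frac{1}{Q^2}.
\]
Here the numerator is a nonzero integer, because reduced fractions are distinct as points of $\mathbb{R}/\mathbb{Z}$ and $d,d'\le Q$. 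So the family $\{a/d\}$ is $\delta$-spaced with $\delta = Q^{-2}\ge 1/N$. Extending $a_n=0$ outside $[1,N]$, the left-hand side of Lemma~\ref{lem:largesieve.linnik} is $\sum_r |S(x_r)|^2$, where $S(x)=\sum_{n\le N} a_n e(nx)$ and $x_r$ runs over these fractions.

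Next I would prove the analytic inequality $\sum_r |S(x_r)|^2 \le (N-1+\delta^{-1})\sum_n |a_n|^2$ for $\delta$-spaced points $x_r$. I would follow Montgomery--Vaughan. By duality it is equivalent to show, for arbitrary complex $z_r$,
\[
\sum_{n=1}^N\Big|\sum_r z_r e(-nx_r)\Big|^2 \le (N-1+\delta^{-1})\sum_r|z_r|^2 .
\]
Expanding the square gives a diagonal term $N\sum_r|z_r|^2$ plus off-diagonal terms $\sum_{r\ne s} z_r\bar z_s \sum_{n} e(n(x_s-x_r))$. After shifting $n$ to be centred, the inner geometric sum becomes a ratio of sines, which splits into an explicit term involving $\cot(\pi(x_r-x_s))$ and a term with $\csc(\pi(x_r-x_s))$ times bounded oscillating factors.

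The key input, and the main obstacle, is the generalized Hilbert inequality
\[
\Big|\sum_{r\ne s}\frac{z_r\bar z_s}{\sin\pi(x_r-x_s)}\Big|\le \delta^{-1}\sum_r|z_r|^2,
\]
together with its $\cot$ analogue, for $\delta$-spaced points. This is where the sharp constant $1$ in front of $\delta^{-1}$ comes from; the cruder Gallagher-type argument via Sobolev's inequality only gives $\pi N+\delta^{-1}$, which would not yield $2N$. I would prove it as Montgomery--Vaughan do. First establish the model case $\sum_{r\ne s} z_r\bar z_s/(\lambda_r-\lambda_s)$ for real $\lambda$'s with spacing $\delta$, by a weighted Cauchy--Schwarz argument that uses the identity for $\sum_{s\ne r,t}(\lambda_r-\lambda_s)^{-1}(\lambda_t-\lambda_s)^{-1}$. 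Then transfer it to the periodic sine kernel via the partial-fraction expansion $\pi\csc\pi x=\sum_k(-1)^k/(x-k)$. Alternatively, I could quote Selberg's extremal majorant of the interval indicator, which gives the same constant $N-1+\delta^{-1}$.

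Finally, combining the two steps with $\delta^{-1}\le Q^2= N$ gives the total bound $(N-1+N)\sum|a_n|^2\le 2N\sum_n|a_n|^2$. This is exactly Lemma~\ref{lem:largesieve.linnik}.
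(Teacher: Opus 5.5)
Your proof is correct, and it supplies an argument the paper does not give: the paper only cites Bombieri--Davenport for this lemma. You derive it from the analytic large sieve for $\delta$-spaced points, in the Montgomery--Vaughan form. The input is that the Farey fractions $a/d$ with $d\le N^{1/2}$ are $\delta$-spaced modulo $1$ with $\delta^{-1}\le\lfloor N^{1/2}\rfloor^2\le N$. This route also gives more: a bound valid for arbitrary well-spaced points, and the constant $N+\lfloor N^{1/2}\rfloor^2$ in place of $2N$.

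Two comments on the analytic part.

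First, you do not need the sharp constant $N-1+\delta^{-1}$. With $e(\theta)=\exp(2\pi i\theta)$, write $\sum_{n=1}^N e(n\theta)=\bigl(e((N+\tfrac12)\theta)-e(\tfrac12\theta)\bigr)/(2i\sin\pi\theta)$ and absorb the phases into the coefficients $z_r$. The off-diagonal part then becomes $\tfrac{1}{2i}$ times a difference of two $\csc$-forms, so it is at most $\delta^{-1}\sum_r|z_r|^2$ in absolute value. Hence the $\csc$ inequality alone gives $N+\delta^{-1}\le 2N$, and the $\cot$ analogue, Selberg's majorant and the ``$-1$'' can all be dropped.

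Second, the real-line inequality $|\sum_{r\ne s}z_r\bar z_s/(\lambda_r-\lambda_s)|\le\pi\delta^{-1}\sum_r|z_r|^2$ cannot be reached by a weighted Cauchy--Schwarz or Schur-test argument. Anything that puts absolute values on the kernel loses a logarithm, because $\sum_{s\ne r}|\lambda_r-\lambda_s|^{-1}$ diverges. Montgomery and Vaughan argue as follows:
\begin{enumerate}[(i)]
\item Take an eigenvector $z$, with eigenvalue $i\mu$, of the real skew-symmetric matrix $H=\bigl((\lambda_r-\lambda_s)^{-1}\bigr)_{r\ne s}$.
\item Expand $\mu^2\|z\|^2=\|Hz\|^2$ using your partial-fraction identity.
\item The cross terms $\sum_{s\ne t}z_s\bar z_t(A_s-A_t)/(\lambda_s-\lambda_t)$, where $A_s=\sum_{r\ne s}(\lambda_r-\lambda_s)^{-1}$, cancel exactly by the eigenvector equation.
\item What remains is at most $3\max_s\sum_{r\ne s}(\lambda_r-\lambda_s)^{-2}\,\|z\|^2\le\pi^2\delta^{-2}\|z\|^2$.
\end{enumerate}

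Your transfer to the $\csc$ kernel via $\pi\csc\pi x=\sum_k(-1)^k/(x-k)$ works if you carry it out concretely. Apply the real-line bound to the points $x_r+j$, $|j|\le K$, with coefficients $(-1)^jz_r$. The terms with $r=s$ cancel by antisymmetry. After dividing by $2K+1$, the Fej\'er-weighted sums $\sum_{|m|\le 2K}\bigl(1-\frac{|m|}{2K+1}\bigr)\frac{(-1)^m}{x_r-x_s+m}$ tend to $\pi\csc\pi(x_r-x_s)$ as $K\to\infty$.
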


\begin{lemma}[Montgomery's large sieve] \label{lem:largesieve.montgomery}
Let $\mathcal{A} \subseteq [1,N] \cap \mathbb{N}$ and  $C(d,h) =  \frac{|\{a \in \mathcal{A}:a \equiv h \mbox{ \rm {\scriptsize mod} } d\}|}{|\mathcal{A}|} $.  Then the following inequality holds:
\begin{align}\label{equ1:largesieve.montgomery}
    |\mathcal{A}| \leq 2N \Bigg(\sum_{d \leq N^{1/2}} d \sum_{h=1}^d \Bigg|\sum_{e \mid d} \frac{\mu(e)}{e}\, C\!\left(\tfrac{d}{e},h\right)\Bigg|^2 \Bigg)^{-1}.
\end{align}
Suppose further that sets $\Omega(p) \subseteq \mathbb{Z}/p\mathbb{Z}$  satisfy $\{a \in \mathcal{A}: (a \mbox{ \rm mod } p) \in \Omega(p)\} = \emptyset$ for each prime $p$. Write $\omega(p)= |\Omega(p)|$. It follows that 
\begin{align}\label{equ2:largesieve.montgomery}
    |\mathcal{A}| \leq 2N \left(\sum_{d \leq N^{1/2}} \mu(d)^2 \prod_{p \mid d} \left(\frac{\omega(p) }{p- \omega(p)}\right) \right)^{-1}.
\end{align}
\end{lemma}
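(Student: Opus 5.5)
The plan is to deduce both parts from Linnik's large sieve inequality (Lemma~\ref{lem:largesieve.linnik}) with the choice $a_n = 1_{\mathcal{A}}(n)$. Write $S(\alpha)=\sum_{a\in\mathcal{A}} e(a\alpha)$ and $Z(q,h)=|\{a\in\mathcal{A}: a\equiv h \bmod q\}| = |\mathcal{A}|\,C(q,h)$. Lemma~\ref{lem:largesieve.linnik} then gives
\begin{align*}
\sum_{d\le N^{1/2}} \sum_{\substack{a=1\\(a,d)=1}}^d |S(a/d)|^2 \le 2N|\mathcal{A}|.
\end{align*}
So everything reduces to identifying, or bounding from below, the primitive sums $G(d)=\sum_{(a,d)=1}|S(a/d)|^2$ in terms of the residue class distribution of $\mathcal{A}$.

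For (\ref{equ1:largesieve.montgomery}) I will prove the exact identity
\begin{align*}
G(d)= d\sum_{h=1}^d \Bigl|\sum_{e\mid d}\frac{\mu(e)}{e}\,Z\!\left(\tfrac{d}{e},h\right)\Bigr|^2 .
\end{align*}
Define $f_d(h)=\sum_{e\mid d}\frac{\mu(e)}{e}Z(d/e,h)$ as a function on $\mathbb{Z}/d\mathbb{Z}$, where $Z(d/e,h)$ depends on $h$ only modulo $d/e$, and compute its discrete Fourier transform. For $q\mid d$, the function $h\mapsto Z(q,h)$ has Fourier coefficient at frequency $a$ equal to $\frac{d}{q}S(a/d)$ if $\frac{d}{q}\mid a$, and $0$ otherwise. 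Summing with the weights $\mu(e)/e$ (here $q=d/e$, so $\frac{d}{q}\cdot\frac{1}{e}=1$) produces $\sum_{e\mid (a,d)}\mu(e)\,S(a/d)$. This equals $S(a/d)$ if $(a,d)=1$ and vanishes otherwise. Parseval on $\mathbb{Z}/d\mathbb{Z}$ then gives the identity. Substituting $Z=|\mathcal{A}|C$, the left-hand side of Linnik's inequality becomes $|\mathcal{A}|^2\sum_d d\sum_h|\cdots|^2$, and dividing by $|\mathcal{A}|^2$ and rearranging yields (\ref{equ1:largesieve.montgomery}).

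For (\ref{equ2:largesieve.montgomery}), discard all non-squarefree $d$ (their contributions are nonnegative). It then suffices to prove Montgomery's local inequality
\begin{align*}
G(d)\ge |\mathcal{A}|^2\prod_{p\mid d}\frac{\omega(p)}{p-\omega(p)}
\end{align*}
for squarefree $d$ and sets $\mathcal{A}$ avoiding $\Omega(p)$. For a prime $p$:
\begin{align*}
G(p)=p\sum_h Z(p,h)^2-|\mathcal{A}|^2 .
\end{align*}
Since $Z(p,h)=0$ for $h\in\Omega(p)$, Cauchy--Schwarz over the remaining $p-\omega(p)$ classes gives $\sum_h Z(p,h)^2\ge |\mathcal{A}|^2/(p-\omega(p))$. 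This yields $G(p)\ge |\mathcal{A}|^2\omega(p)/(p-\omega(p))$.

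The main obstacle is passing from primes to squarefree $d$, since $G$ is not literally multiplicative in $\mathcal{A}$. I would follow Gallagher's device. The prime-case argument applies verbatim to the twisted sequence $n\mapsto 1_{\mathcal{A}}(n)e(n\beta)$ for any $\beta$, because it only uses the support of the sequence. This gives
\begin{align*}
\sum_{a=1}^{p-1}|S(\beta+a/p)|^2\ge \frac{\omega(p)}{p-\omega(p)}|S(\beta)|^2 .
\end{align*}
Writing $d=pd'$ with $p\nmid d'$, the Chinese remainder theorem identifies the reduced fractions $b/d$ with the sums $a/p+c/d'$, where $a$ and $c$ run over reduced residues. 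Apply the twisted prime inequality with $\beta=c/d'$ and then induct on the number of prime factors of $d$. Finally, summing over squarefree $d\le N^{1/2}$ and combining with Linnik's bound gives (\ref{equ2:largesieve.montgomery}).
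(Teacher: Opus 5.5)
Your proof is correct and complete. The paper does not prove this lemma; it attributes it to Montgomery and remarks that it can be deduced from Lemma~\ref{lem:largesieve.linnik}. Your overall route is therefore the one the paper indicates, and the differences lie in the two ingredients.

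For (\ref{equ1:largesieve.montgomery}), the paper's Lemma~\ref{lem:largesieve.denominator} expands $|S(a/d)|^2$ and inserts $\sum_{e\mid(a,d)}\mu(e)$ for the coprimality condition. This gives the alternating sum $G(d)=d\sum_{e\mid d}\frac{\mu(e)}{e}\sum_{h=1}^{d/e}Z(d/e,h)^2$. The inequality the paper records from it is in that form, not in the square-of-a-sum form stated in (\ref{equ1:largesieve.montgomery}). Your Parseval computation puts the M\"obius weights on the counting functions, so that $\widehat{f_d}(a)=S(a/d)1_{(a,d)=1}$. It proves the stated form directly and shows each $d$-term is a sum of squares; the two expressions agree, since both equal $G(d)$. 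The paper's alternating form is the one suited to the inclusion--exclusion over $I\subseteq\{1,\dots,k\}$ in Lemma~\ref{lem:usedinlargesieve} and Corollary~\ref{cor:denominator.rearrangement}.

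For (\ref{equ2:largesieve.montgomery}), your Cauchy--Schwarz at a single prime combined with Gallagher's twist $\beta=c/d'$ is the classical and shortest argument. In the paper's own framework the local bound instead follows from Corollary~\ref{cor:denominator.rearrangement}. If $\mathcal{A}$ misses every $\Omega(p_i)$, then $\Lambda(\underline{e})=0$ for $\underline{e}\neq\underline{1}$, and the right-hand side collapses to $|\mathcal{A}|^2\sum_{I}(-1)^{\#I}\prod_{i\notin I}\frac{p_i}{p_i-\omega(p_i)}=|\mathcal{A}|^2\prod_{i}\frac{\omega(p_i)}{p_i-\omega(p_i)}$. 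That route rests on the averaging/convexity Lemma~\ref{lem:usedinlargesieve} and is heavier. Because it works with the real counts $\Lambda(\underline{e})$, it keeps working when the classes $\Omega(p_i)$ are merely underrepresented rather than absent (Lemma~\ref{lem:uncommon.residue.classes}), which is the situation the paper actually needs.
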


Lemma~\ref{lem:largesieve.linnik} appears in Bombieri and Davenport's paper~\cite{Bombieri:1969:OLS}. Lemma~\ref{lem:largesieve.montgomery}, a version of which first appears in Montgomery's paper~\cite{Montgomery:1968:NLS}, can be deduced from Lemma~\ref{lem:largesieve.linnik}. Both of these papers  build on earlier work of Linnik~\cite{Linnik:1941:TLS} and R\'enyi~\cite{Renyi:1951:TLS}. The second inequality in Lemma~\ref{lem:largesieve.montgomery} can also be obtained using the Selberg sieve (see for instance~\cite{Tenenbaum:2015:APN}).

\medskip

For sets which occupy less than half of the residue classes modulo $p$ for most primes $p$, Gallagher's sieve gives stronger bounds than the large sieve.

\begin{lemma}[Gallagher's larger sieve]\label{lem:largesieve.gallagher}
Let $\mathcal{A} \subseteq [1,N] \cap \mathbb{N}$. Let   $v(p)=|\{\overline{a} \mbox{ \rm mod } p: a \in \mathcal{A}\}|$. Let  $\mathcal{P}$ be a subset of the primes.    Then, assuming that the denominator of the following fraction is positive, we have
\begin{align*}
    |\mathcal{A}| \leq \frac{-\log(N) + \sum_{p \in \mathcal{P}} \log(p)}{-\log(N) + \sum_{p \in \mathcal{P}} \frac{\log(p) }{v(p)}}.
\end{align*}
\end{lemma}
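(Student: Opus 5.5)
The plan is the standard double-counting argument. We count pairs of elements of $\mathcal{A}$ that are congruent modulo primes in $\mathcal{P}$, weight each prime by $\log p$, and bound this weighted count from below by Cauchy--Schwarz and from above by unique factorisation.

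We may assume $\mathcal{A}\neq\emptyset$, since otherwise there is nothing to prove. Then $v(p)\geq 1$ for every $p$. For each $p\in\mathcal{P}$ and each residue $h$ mod $p$, let $Z(p,h)=|\{a\in\mathcal{A}: a\equiv h \bmod p\}|$. At most $v(p)$ of the $Z(p,h)$ are nonzero, and $\sum_h Z(p,h)=|\mathcal{A}|$. By Cauchy--Schwarz,
\begin{align*}
\#\{(a,b)\in\mathcal{A}^2: a\equiv b \bmod p\} = \sum_h Z(p,h)^2 \geq \frac{|\mathcal{A}|^2}{v(p)}.
\end{align*}
Multiplying by $\log p$ and summing over $p\in\mathcal{P}$ gives the lower bound
\begin{align*}
\sum_{p\in\mathcal{P}}\log p \sum_h Z(p,h)^2 \geq |\mathcal{A}|^2\sum_{p\in\mathcal{P}}\frac{\log p}{v(p)}.
\end{align*}

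For the upper bound, we swap the order of summation and write the left-hand side as
\begin{align*}
\sum_{(a,b)\in\mathcal{A}^2}\ \sum_{\substack{p\in\mathcal{P}\\ p\mid a-b}}\log p .
\end{align*}
The diagonal pairs $a=b$ contribute exactly $|\mathcal{A}|\sum_{p\in\mathcal{P}}\log p$. For an off-diagonal pair $a\neq b$, the primes in $\mathcal{P}$ dividing $a-b$ are distinct prime factors of the nonzero integer $a-b$. Their product therefore divides $a-b$, so $\sum_{p\mid a-b}\log p\leq \log|a-b| < \log N$, using $a,b\in[1,N]$. Since there are $|\mathcal{A}|(|\mathcal{A}|-1)$ off-diagonal pairs, they contribute at most $|\mathcal{A}|(|\mathcal{A}|-1)\log N$.

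Combining the two bounds and dividing by $|\mathcal{A}|$ gives
\begin{align*}
|\mathcal{A}|\sum_{p\in\mathcal{P}}\frac{\log p}{v(p)} \leq \sum_{p\in\mathcal{P}}\log p + (|\mathcal{A}|-1)\log N,
\end{align*}
which rearranges to
\begin{align*}
|\mathcal{A}|\Bigl(-\log N+\sum_{p\in\mathcal{P}}\frac{\log p}{v(p)}\Bigr)\leq -\log N+\sum_{p\in\mathcal{P}}\log p .
\end{align*}
Because the denominator is assumed positive, we can divide by it and obtain the claimed inequality.

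There is no real obstacle in this argument. The only points needing care are the use of squarefreeness in the off-diagonal bound (the primes are distinct, so their product divides $a-b$) and keeping the diagonal term separate, since it is exactly what produces the $-\log N$ in the numerator. If $\mathcal{P}$ is infinite, we first apply the argument to each finite subset of $\mathcal{P}$ and then pass to the limit.
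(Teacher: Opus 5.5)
Your proof is correct and is exactly Gallagher's original argument: Cauchy--Schwarz over the $v(p)$ occupied residue classes gives the lower bound, and the product of the distinct primes of $\mathcal{P}$ dividing $a-b\neq 0$ is at most $|a-b|<N$, which gives the upper bound. The paper gives no proof of this lemma and simply cites Gallagher~\cite{Gallagher:1971:LS}, whose proof is the one you reconstructed.
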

This was proven by Gallagher in~\cite{Gallagher:1971:LS}. 

\medskip

Finally, in one later proof we will briefly need to make use of a simple small sieve inequality:

\begin{lemma}[Brun's small sieve]\label{lem:small.brun}
Let $\mathcal{C} \subseteq \mathbb{N}$. Write $X =|\mathcal{C}|$. Let $\varepsilon>0$ and let $\mathcal{P} \subseteq [1,X^\varepsilon]$ be a subset of the primes. Suppose that for some $\kappa, D>0$, the integers $\omega(p) $ satisfy the following properties: 
\begin{enumerate}[{\rm (1)}]
    \item $0 \leq \frac{\omega(p) }{p} \leq \frac{1}{2}$ when $p \in \mathcal{P}$, 
    \item $\left|\#\{c \in \mathcal{C}: p_1 \dots p_j \,\, \mbox{\rm divides } c\} - \frac{\omega(p_1) \dots \omega(p_j)}{p_1 \dots p_j} \#\mathcal{C}\right| \leq \omega(p_1) \dots \omega(p_j)$ when $p_1, \dots, p_j \in \mathcal{P}$ $($distinct$)$,
    \item $\sum_{w \leq p <X^\varepsilon, \,p \in \mathcal{P}} \frac{\omega(p) \log(p)}{p} \leq \kappa \log(\frac{X^\varepsilon}{w}) +D$ when $2 \leq w \leq X^\varepsilon.$
\end{enumerate}
Then we have the following upper bound: $$\#\{c \in \mathcal{C}: \forall p \in \mathcal{P} \,\, (c,p)=1\} \ll_{\varepsilon, \kappa, D}  X\prod_{p \in \mathcal{P}} \left(1-\frac{\omega(p)}{p} \right).$$
\end{lemma}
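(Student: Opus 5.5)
This is a standard upper bound sieve, and I would deduce it from the fundamental lemma of the combinatorial (Brun) sieve rather than reprove it. Write $z=X^{\varepsilon}$, $P(z)=\prod_{p\in\mathcal{P}}p$, $V(z)=\prod_{p\in\mathcal{P}}(1-\omega(p)/p)$. For squarefree $d\mid P(z)$ write $\omega(d)=\prod_{p\mid d}\omega(p)$, and let $r_d=\#\{c\in\mathcal{C}: d\mid c\}-\frac{\omega(d)}{d}X$. Hypothesis (2) says exactly that $|r_d|\le \omega(d)$.

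\emph{Step 1: Brun's truncated inclusion--exclusion.} Choose Brun's upper bound weights $\lambda_d^+$: take $\lambda_d^+=\mu(d)$ on a set of squarefree $d\mid P(z)$ with $d<y$, where $y=z^s$ is a level to be fixed, and $\lambda_d^+=0$ otherwise. These weights satisfy $\sum_{d\mid n}\lambda^+_d\ge \sum_{d\mid n}\mu(d)$, and $|\lambda_d^+|\le 1$. Then
\begin{align*}
\#\{c\in\mathcal{C}: (c,P(z))=1\}\le X\sum_{d}\lambda^+_d\frac{\omega(d)}{d}+\sum_{d<y,\ d\mid P(z)}|r_d| .
\end{align*}
Hypotheses (1) and (3) are precisely the dimension conditions $\omega(p)/p\le 1/2$ and $\sum_{w\le p<z}\omega(p)\log p/p\le\kappa\log(z/w)+D$. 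Under them, the fundamental lemma gives
\begin{align*}
\sum_d\lambda^+_d\frac{\omega(d)}{d}=V(z)\bigl(1+O_{\kappa,D}(e^{-s})\bigr).
\end{align*}
So for a fixed $s$ the main term is $\ll_{\kappa,D} XV(z)$. This is the step where the actual sieve work lives. If I had to do it by hand, I would run Brun's iterative construction: truncate at rank conditions $\Omega(d_j)$ over dyadic-type blocks of primes, and bound the tail using Rankin's trick with (3).

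\emph{Step 2: choose the level.} Take $s=1/(2\varepsilon)$, so that $y=X^{1/2}$. The remainder is then bounded by
\begin{align*}
\sum_{d<X^{1/2},\, d\mid P(z)}\omega(d)\le X^{1/2}\sum_{d\mid P(z)}\frac{\omega(d)}{d}=X^{1/2}\prod_{p\in\mathcal{P}}\Bigl(1+\frac{\omega(p)}{p}\Bigr).
\end{align*}
By (1) we have $1+\omega(p)/p\le (1-\omega(p)/p)^{-1}$. Hence this is at most $X^{1/2}V(z)^{-1}$.

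\emph{Step 3: compare the remainder with the main term.} Applying (3) with $w=2$ and partial summation gives $\sum_{p\in\mathcal{P}}\omega(p)/p\le\kappa\log\log z+O(1)$. Using $\log(1-t)\ge -t-t^2$ for $t\le 1/2$, this yields $V(z)\gg_{\kappa,D}(\log X)^{-\kappa}$. The remainder is therefore $\ll X^{1/2}(\log X)^{2\kappa}\cdot (XV(z))/X\cdot (\log X)^{\kappa}$, which is $o(XV(z))$. Combining Steps 1--3 gives $\#\{c\in\mathcal{C}:\forall p\in\mathcal{P},\,(c,p)=1\}\ll_{\varepsilon,\kappa,D}XV(z)$.

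The only genuine obstacle is the fundamental lemma in Step 1, namely controlling $\sum_d\lambda_d^+\omega(d)/d$ uniformly in terms of $\kappa$ and $D$ alone. I would simply cite it (Halberstam--Richert, Theorem 2.5, or Tenenbaum I.4) rather than reprove it. Since $s$ depends only on $\varepsilon$, the implied constant depends only on $\varepsilon,\kappa,D$, as claimed.
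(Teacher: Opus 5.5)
Your route is the standard deduction: Brun/fundamental-lemma upper-bound weights at level $y=X^{1/2}$, the trivial remainder bound $\sum_{d<X^{1/2},\,d\mid P(z)}\omega(d)\le X^{1/2}\prod_{p\in\mathcal{P}}(1+\omega(p)/p)\le X^{1/2}V(z)^{-1}$, and a comparison with $XV(z)$. The paper gives no argument and simply quotes Theorem~2.2 of Halberstam--Richert, so yours is a reasonable unpacking of that citation.

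\textbf{The gap is in how you use hypothesis (3).} As printed, (3) is only asserted for intervals $[w,X^{\varepsilon})$ whose upper endpoint is fixed at $z=X^{\varepsilon}$. The fundamental lemma you invoke needs $\sum_{w\le p<z'}\omega(p)\log p/p\le\kappa\log(z'/w)+D$ for \emph{every} $2\le w\le z'\le z$ (the Halberstam--Richert condition $\Omega_2(\kappa)$). So does your partial summation in Step~3. In particular, ``(3) with $w=2$ and partial summation'' does not give $\sum_{p\in\mathcal{P}}\omega(p)/p\le\kappa\log\log z+O(1)$, because partial summation needs a bound on $\sum_{2\le p<t}\omega(p)\log p/p$ at every $t\le z$.

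The fixed-endpoint version is strictly weaker. Take $\omega(p)=\lfloor p/2\rfloor$ for $3\le p\le\kappa\log z$ and $\omega(p)=0$ otherwise. This satisfies (1), and it satisfies the printed (3) for large $z$, since the left side of (3) is at most $\tfrac12\theta(\kappa\log z)\sim\tfrac{\kappa}{2}\log z$. Yet $\sum_p\omega(p)/p\gg_\kappa\log z/\log\log z$, and $V(z)=\exp(-\Theta(\log z/\log\log z))$ is smaller than any power of $\log X$. For such $\omega$ the local dimension condition fails on $[2,\kappa\log z)$, and the fundamental lemma with constants depending only on $\kappa,D$ is not available. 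So your assertion that (1) and (3) are ``precisely'' the dimension conditions is not right for the statement as written.

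\textbf{How to repair it.} State explicitly that you read (3) in the Halberstam--Richert sense, for all $2\le w\le z'\le X^{\varepsilon}$. This is what the paper's citation means. The paper's one application ($\omega(p)=k$ on primes $p\equiv 3 \bmod 4$ in $[\log(x)^k,x^{1/k}]$) satisfies it by Mertens' estimate. With that reading, partial summation gives $\sum_{w\le p<z}\omega(p)/p\le\kappa\log\frac{\log z}{\log w}+\frac{D}{\log w}$, and Step~3 goes through. In fact Step~3 only needs $V(z)^2\gg X^{-1/2}$, which even the printed (3) yields, since it gives $\sum_p\omega(p)/p\ll_{\kappa,D}\log z/\log\log z$. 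Step~1, however, genuinely needs the stronger form.

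\textbf{Two smaller corrections.}
\begin{enumerate}[(a)]
\item The chain in Step~3 has a stray factor $(\log X)^{\kappa}$. The remainder is $X^{1/2}V^{-1}=X^{-1/2}V^{-2}\cdot XV\ll X^{-1/2}(\log X)^{2\kappa}\,XV$.
\item When $\varepsilon$ is large, $s=1/(2\varepsilon)$ may fall below the range where your version of the fundamental lemma holds. Repair this by sifting only by $p<z_0=X^{1/(2s_0)}$ with $s_0=s_0(\kappa)$, which only enlarges the sifted set. Then use $V(z_0)\ll_{\varepsilon,\kappa,D}V(z)$, which follows from (3) with $w=z_0$.
\end{enumerate}
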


Brun's sieve was developed by Viggo Brun in~\cite{Brun:1915:BSS}. The version stated above is Theorem~2.2 of~\cite{Halberstam:2013:SM}.

\section{Sieving modulo prime squares}\label{sec:sieve}

The aim of this section is to improve upper bounds for $|\mathcal{A}|$ when  there are many primes $p$ and residue classes $c$ mod $p$ for which  $|\{\overline{a} \in \mathbb{Z}/p^2\mathbb{Z}: a \in \mathcal{A}, a \equiv c \mbox{ mod } p\}|$ is small. That is, unlike in a typical application of  the Selberg sieve or the large sieve (see inequality (\ref{equ2:largesieve.montgomery}) of Lemma~\ref{lem:largesieve.montgomery}), the set $\mathcal{A}$ might not avoid any residue classes mod $p$ entirely, but there are many  $c$ mod $p$ for which only few of the corresponding residue classes  $\{d \mbox{ mod } p^2: d \equiv c \mbox{ mod } p\}$ occur in $\mathcal{A}$.   

\medskip
 
The main result of Section~\ref{sec:sieve}, stated in Theorem~\ref{thm:largesieveimprovement}, is the following: 
If  $\mathcal{A} \subseteq \mathbb{N} \cap [1,N]$  with 
$$|\{\overline{a} \in \mathbb{Z}/p^2\mathbb{Z}: a \in \mathcal{A}, a \equiv c \mbox{ mod } p\}| \leq \Theta \quad \mbox{ for all } p \in \mathcal{P} \mbox{ and } c \in \Omega(p),$$
 (where $\mathcal{P} \subseteq [\frac{1}{2}M,M]$ with $M^k \leq N^{1/2}$), then the following upper bound is satisfied: 
\begin{align*}
&|\mathcal{A}| \leq  \xi(k,\omega,\mathcal{P},\Theta) \, 2N \left(  \mathop{\sum \dots \sum}_{\substack{ p_1, \dots, p_k \in \mathcal{P} \\ p_1> \dots > p_k }}  \prod_{i=1}^k \left( \frac{\omega(p_i)}{p_i-\omega(p_i)}\right) \right)^{-1}.
\end{align*}

The quantity $ \xi(k,\omega,\mathcal{P},\Theta)$ is described in Definition~\ref{def:sievefactors} and is independent of $N$.

\subsubsection{Proof idea}
 When applying Linnik's large sieve inequality to the indicator function $1_{\mathcal{A}}(n)$, the size of the LHS depends on the distribution of $\mathcal{A}$ over the residue classes modulo $d$ for $d\leq N^{1/2}$ -- the LHS is big if many residue classes are missing in $\mathcal{A}$, and the LHS is small if $\mathcal{A}$ is approximately equidistributed over many residue classes.  We have residue class restrictions modulo prime squares. To utilize these, one might consider the contribution of square moduli $d=(p_1 \dots p_k)^2$ to the large sieve. (This is how the Johnsen-Selberg sieve inequality is obtained.)   However, the condition $d \leq N^{1/2}$ then implies that one may only use moduli $p_1 \dots p_k \leq N^{1/4}$, which is a severe restriction on the size of the LHS. To get around this issue,  we instead observe that the given residue class restrictions modulo prime squares imply that certain residue classes modulo primes are underrepresented, and   then  sieve modulo $d=(p_1 \dots p_k)$, allowing for a larger selection of moduli in the denominator. 
 
\medskip 
 In short, to prove Theorem~\ref{thm:largesieveimprovement}, we will give good lower bounds on the LHS of the large sieve inequality when certain residue classes modulo $d$ are not missing, but  instead occur only rarely in $\mathcal{A}$. This  key idea   lets us  use moduli $p_1 \dots p_k$ rather than $(p_1  \dots p_k)^2$, allowing us to sum over a much larger collection of moduli in the large sieve, and thus improve on the Johnsen-Selberg sieve.

\subsection{Manipulating the large sieve inequality}\label{ssec:denominator.manipulation}

As a first step towards the proof of Theorem~\ref{thm:largesieveimprovement}, we rearrange the LHS of the large sieve inequality into the following convenient expression:

\begin{lemma}\label{lem:largesieve.denominator}
Let $(a_n)$ be an arbitrary complex sequence. Then 
 \begin{align*}
         \sum_{\substack{a=1 \\ (a,d)=1}}^d \left|\sum_{n\leq N} a_n \exp\left(\frac{2 \pi i n a}{d} \right) \right|^2  &= d\sum_{e \mid d} \frac{\mu(e)}{e}
       \sum_{h=1}^{d/e} \Bigg|\sum_{\substack{n\leq N \\ n \equiv h (d/e)}} a_n   \Bigg|^2.
    \end{align*}
\end{lemma}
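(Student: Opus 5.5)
The plan is to expand the square on the left-hand side and remove the coprimality condition with Möbius inversion over $e \mid \gcd(a,d)$. Writing $e(x)=\exp(2\pi i x)$ and expanding $|\cdot|^2$, the left-hand side equals
\begin{align*}
\sum_{n,m\leq N} a_n \overline{a_m} \sum_{\substack{a=1\\(a,d)=1}}^d e\!\left(\frac{(n-m)a}{d}\right).
\end{align*}
The inner sum is a Ramanujan sum. Inserting $\sum_{e\mid (a,d)}\mu(e)$ and writing $a=eb$ with $1\le b\le d/e$, it becomes
\begin{align*}
\sum_{e\mid d}\mu(e)\sum_{b=1}^{d/e} e\!\left(\frac{(n-m)b}{d/e}\right)=\sum_{e\mid d}\mu(e)\,\frac{d}{e}\,1_{n\equiv m \ (d/e)},
\end{align*}
by orthogonality of additive characters modulo $d/e$.

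Substituting this back, the left-hand side becomes
\begin{align*}
d\sum_{e\mid d}\frac{\mu(e)}{e}\sum_{\substack{n,m\le N\\ n\equiv m\ (d/e)}} a_n\overline{a_m}.
\end{align*}
We then split the pairs $(n,m)$ according to the common residue class $h\in\{1,\dots,d/e\}$ modulo $d/e$. For each $h$, the inner double sum factors as $\bigl(\sum_{n\equiv h} a_n\bigr)\overline{\bigl(\sum_{m\equiv h} a_m\bigr)}=\bigl|\sum_{n\le N,\ n\equiv h\ (d/e)} a_n\bigr|^2$, which is exactly the stated identity.

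There is no real obstacle here: it is a finite rearrangement, and all sums are finite, so interchanging the order of summation is harmless. The only point requiring care is the bookkeeping of the factor $d/e$ coming from orthogonality, which becomes $d\cdot\frac{1}{e}$ and produces the weight $\mu(e)/e$ on the right-hand side.
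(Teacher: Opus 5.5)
Your proposal is correct and follows essentially the same route as the paper: expand the square, detect $(a,d)=1$ via $\sum_{e\mid (a,d)}\mu(e)$, substitute $a=eb$, use orthogonality modulo $d/e$ to get $\frac{d}{e}1_{n\equiv m\,(d/e)}$, and regroup the pairs by their common residue class $h$ modulo $d/e$. The only blemish is cosmetic: you use the letter $e$ both for the divisor and for $e(x)=\exp(2\pi i x)$, so one of them should be renamed.
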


\begin{proof}
We use  orthogonality relations for sums of roots of unity.
   \begin{align*}
         \sum_{\substack{a=1 \\ (a,d)=1}}^d \left|\sum_{n\leq N} a_n \exp\left(\frac{2 \pi i n a}{d} \right) \right|^2 &=
        \sum_{n\leq N}\sum_{m\leq N} a_n \overline{a_m} \sum_{\substack{a=1 }}^d 1_{(a,d)=1} \exp\left(\frac{2 \pi i (n-m) a}{d} \right)\\
        &=
        \sum_{n\leq N}\sum_{m\leq N} a_n \overline{a_m} \sum_{e \mid d} \sum_{\substack{a=1   }}^d 1_{e\mid a} \, \mu(e) \exp\left(\frac{2 \pi i (n-m) a}{d} \right) \\
         &=\sum_{e \mid d} \mu(e)
        \sum_{n\leq N}\sum_{m\leq N} a_n \overline{a_m} \sum_{\substack{b=1  }}^{d/e}  \exp\left(\frac{2 \pi i (n-m) b}{(d/e)} \right)
         \\
         &=\sum_{e \mid d} \mu(e)
        \sum_{n\leq N}\sum_{\substack{m\leq N}} a_n \overline{a_m} \,\,\frac{d}{e} 1_{n \equiv m (d/e)}  
         \\
         &=d\sum_{e \mid d} \frac{\mu(e)}{e}
       \sum_{h=1}^{d/e} \Bigg|\sum_{\substack{n\leq N \\ n \equiv h (d/e)}} a_n   \Bigg|^2.
    \end{align*}
 This is what we wanted to show.
    \end{proof}
    
In particular, if $\mathcal{A} \subseteq [1,N] \cap \mathbb{N}$ and  $C(d,h) =  \frac{|\{a \in \mathcal{A}:a \equiv h \mbox{ \rm {\scriptsize mod} } d\}|}{|\mathcal{A}|} $, Lemma~\ref{lem:largesieve.linnik} with $a_n=1_{\mathcal{A}}(n)$ now gives the following inequality:
\begin{align*}
|\mathcal{A}| \leq \frac{2N}{\sum_{d\leq N^{1/2}}d\sum_{e \mid d} \frac{\mu(e)}{e}
       \sum_{h=1}^{d/e} C\left(\frac{d}{e},h\right)^2}.
\end{align*}

Next we show that if we choose a set $S$ of residue classes  $h$  modulo $d$ and replace $\mathcal{A}$ by a set $\mathcal{B}$    which has the same number of elements in $S$ as $\mathcal{A}$, but which is  equidistributed over these residue classes $S$ (and which has the same elements  as $\mathcal{A}$ for $h \not\in S$), then the LHS of the large sieve becomes smaller.  The following inequality, which shows how taking averages decreases a quantity, is the key to this observation:

\begin{lemma} \label{lem:usedinlargesieve}
Let $p_1, \dots, p_k$ be distinct primes. Consider some  numbers $\alpha(h_1,\dots, h_k) \in \mathbb{R}$, a set $U \subseteq \{1,\dots,k\}$, and some non-empty sets $S_i \subset \{1, \dots, p_i\}$ for $i \in U$. Define $\beta(h_1,\dots,h_k)$ to equal
\begin{align*}
\beta(h_1,\dots,h_k) = \left(\prod_{i \in U} \frac{1}{|T_i|}\right)\sum_{l_1 \in T_1}\!\! \dots \!\! \sum_{l_k \in T_k} \alpha(l_1,\dots,l_k) \,\, \mbox{ where } \,\, T_i = \begin{cases}
S_i \quad \quad\quad\quad\quad\,\,\,\, \mbox{ if } i \in U, h_i \in S_i, \\
\{1,\dots,p_i\} \setminus S_i \,\, \mbox{ if } i \in U, h_i \not\in S_i, \\
\{h_i\}   \quad \quad\quad\quad\quad\! \mbox{ if } i \not\in U.
\end{cases}
\end{align*}
Then the following inequality holds:
\begin{align*}  &\sum_{I \subseteq \{1,\dots,k\}}\left(\frac{(-1)^{\#I}}{\prod_{i \in I} p_i}\right)
     \mathop{\sum \dots \sum}_{\substack{ h_i \in \{1, \dots, p_i\} \\ \mbox{ \rm \begin{footnotesize}
     for
     \end{footnotesize}} i \not\in I }}  \Bigg(   \mathop{\sum \dots \sum}_{\substack{ h_i \in \{1, \dots, p_i\} \\ \mbox{ \rm \begin{footnotesize}
     for
     \end{footnotesize}} i \in I }} \alpha(h_1,\dots,h_k)  \Bigg)^2 \\ 
     \geq 
     &\sum_{I \subseteq \{1,\dots,k\}}\left(\frac{(-1)^{\#I}}{\prod_{i \in I} p_i}\right)
     \mathop{\sum \dots \sum}_{\substack{ h_i \in \{1, \dots, p_i\} \\ \mbox{ \rm \begin{footnotesize}
     for
     \end{footnotesize}} i \not\in I }}  \Bigg( \mathop{\sum \dots \sum}_{\substack{ h_i \in \{1, \dots, p_i\} \\ \mbox{ \rm \begin{footnotesize}
     for
     \end{footnotesize}} i \in I }} \beta(h_1,\dots,h_k)  \Bigg)^2.
       \end{align*}
      \end{lemma}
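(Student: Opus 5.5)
The plan is to recognise both sides of the inequality as squared norms of orthogonal projections on the finite-dimensional real Hilbert space $V=\bigotimes_{i=1}^k \mathbb{R}^{p_i}$. Here $\alpha$ is viewed as a function on $\{1,\dots,p_1\}\times\dots\times\{1,\dots,p_k\}$, with the standard inner product $\langle \alpha,\gamma\rangle=\sum_{h}\alpha(h)\gamma(h)$. For each $i$ let $J_i$ be the averaging operator on $\mathbb{R}^{p_i}$, $(J_i f)(h)=\frac{1}{p_i}\sum_{l=1}^{p_i} f(l)$. This is the orthogonal projection onto constant functions, so $P_i=\mathrm{Id}-J_i$ is the orthogonal projection onto mean-zero functions. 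Put $P=\bigotimes_i P_i$.

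\textbf{Step 1: the left-hand side equals $\|P\alpha\|^2$.} Expanding the tensor product gives $P=\sum_{I}(-1)^{\#I}\bigotimes_{i\in I}J_i\otimes\bigotimes_{i\notin I}\mathrm{Id}$. Since $P$ is an orthogonal projection, $\|P\alpha\|^2=\langle \alpha, P\alpha\rangle$. For fixed $I$, we have $\langle \alpha, (\bigotimes_{i\in I}J_i)\alpha\rangle=\bigl(\prod_{i\in I}p_i\bigr)^{-1}\sum_{h_i,\,i\notin I}\bigl(\sum_{h_i,\,i\in I}\alpha(h)\bigr)^2$. Summing over $I$ with the signs $(-1)^{\#I}$ gives exactly the left-hand side. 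The same computation with $\beta$ in place of $\alpha$ shows that the right-hand side equals $\|P\beta\|^2$.

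\textbf{Step 2: $\beta=E\alpha$ for an orthogonal projection $E$ commuting with $P$.} For $i\in U$, let $E_i$ on $\mathbb{R}^{p_i}$ replace $f(h)$ by the average of $f$ over $S_i$ if $h\in S_i$, and by the average over $\{1,\dots,p_i\}\setminus S_i$ otherwise. For $i\notin U$, let $E_i=\mathrm{Id}$. Then $E=\bigotimes_i E_i$ satisfies $E\alpha=\beta$; this is just the defining formula with the sets $T_i$. Note the degenerate case $S_i=\{1,\dots,p_i\}$ is excluded because $S_i\subset\{1,\dots,p_i\}$ is proper, so both averages make sense. Each $E_i$ is the orthogonal projection onto the subspace of functions constant on $S_i$ and on its complement, hence self-adjoint and idempotent. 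This subspace contains the constants, so $E_iJ_i=J_i$, and taking adjoints gives $J_iE_i=J_i$. Therefore $E_i$ commutes with $J_i$, hence with $P_i$, and so $E$ commutes with $P$.

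\textbf{Step 3: conclude.} We get $\|P\beta\|=\|PE\alpha\|=\|E(P\alpha)\|\le\|P\alpha\|$, because $E$, as a tensor product of orthogonal projections, is an orthogonal projection and therefore has norm at most $1$. Squaring gives the claimed inequality.

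The only step needing care is Step 1: matching the signed sum over $I$, including the normalisations $1/\prod_{i\in I}p_i$, with the expansion of $\langle\alpha,\bigotimes_i(\mathrm{Id}-J_i)\alpha\rangle$. This is routine once one uses that $J_i$ is idempotent and self-adjoint, so that $\langle \alpha, J\alpha\rangle=\|J\alpha\|^2$ for $J=\bigotimes_{i\in I}J_i$. The primality of the $p_i$ plays no role in the argument.
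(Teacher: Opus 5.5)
Your argument is correct, and it differs genuinely from the paper's.

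\textbf{The paper's route.} The paper first reduces to a singleton $U=\{1\}$, writing $\beta$ as an iterated one-coordinate averaging and appealing to symmetry. It then notes that the terms with $1\in I$ coincide for $\alpha$ and $\beta$. For the remaining terms, with $h_1$ confined to one block $T_1$, it studies the quadratic polynomial $\Phi(t)$ obtained by replacing $\alpha$ with $t\alpha+(1-t)\beta$. It gets $\Phi'(0)=0$ because $\alpha-\beta$ sums to zero over $T_1$ while $\beta$ is constant in $h_1$ there. It gets $\Phi''\ge 0$ by rewriting $\Phi''$, via orthogonality of additive characters, as a sum of squared moduli of Fourier coefficients at non-zero frequencies.

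\textbf{The same structure in your notation.} The $1\in I$ terms agree because $J_1E_1=E_1J_1=J_1$. The remaining terms equal $\|(\mathrm{Id}\otimes P')\gamma\|^2$ with $P'=\bigotimes_{i\ge 2}P_i$. The $\Phi$-argument is then the Pythagorean expansion of this quantity at $\gamma=\beta+(\alpha-\beta)$: $\Phi'(0)=0$ is the vanishing cross term, and the character-sum formula for $\Phi''$ is Parseval.

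\textbf{What each route buys.} Your operator formulation handles every $U$ at once, with no reduction to singletons. It also gives the exact identity $\mathrm{LHS}-\mathrm{RHS}=\|(\mathrm{Id}-E)P\alpha\|^2$. Moreover, it shows that only one property is used: the range of each $E_i$ contains the constants. So the two-block partition $\{S_i,\{1,\dots,p_i\}\setminus S_i\}$ could be replaced by an arbitrary partition of $\{1,\dots,p_i\}$. The paper's route, in exchange, stays within the explicit exponential-sum formalism of the large sieve in which the lemma is applied.

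(Your remark that $S_i$ must be proper is not actually needed: if $S_i=\{1,\dots,p_i\}$ then $E_i=J_i$, and everything goes through unchanged.)
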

 
\begin{proof} Note that if $U = \{1,\dots,j\}$, then
\begin{align*}
\beta(h_1,\dots,h_k) = \frac{1}{|T_1|}\sum_{l_1 \in T_1}\left( \frac{1}{|T_2|}\sum_{l_2 \in T_2} \left(  \dots  \left(\frac{1}{|T_j|}\sum_{l_j \in T_j} \alpha(l_1,\dots,l_j,h_{j+1},\dots,h_k)\right) \dots \right)\right), 
\end{align*}
where $T_i = S_i$ if $h_i \in S_i$ and $T_i = \{1,\dots, p_i\} \setminus S_i$ otherwise. In particular, the function $\beta(h_1,\dots,h_k)$ can also be produced by repeatedly applying the construction  given at the start of Lemma~\ref{lem:usedinlargesieve} with  a singleton $U$, starting with $\alpha(h_1,\dots,h_k)$ and using $U=\{j\}$, $U=\{j-1\}$, \dots, and finally $U= \{1\}$.  
Hence if we can show that Lemma~\ref{lem:usedinlargesieve} holds for any  singleton $U=\{i\}$, then it must also hold for $U=\{1,\dots,j\}$. However, by symmetry it is clear that if Lemma~\ref{lem:usedinlargesieve} holds for  $U=\{1,\dots,j\}$, then it holds for any other $U$ of cardinality $j$. Thus it suffices to prove Lemma~\ref{lem:usedinlargesieve} for $U= \{1\}$. From now on we will thus assume that $U= \{1\}$.

\medskip

\underline{Contribution 1}: Sets $I$ with $1 \in I$. 
Note that for $U= \{1\}$ and $1 \in I$ we have \begin{align*} 
     &\mathop{\sum \dots \sum}_{\substack{ h_i \in \{1, \dots, p_i\} \\ \mbox{ \rm \begin{footnotesize}
     for
     \end{footnotesize}} i \not\in I }}  \Bigg|   \mathop{\sum \dots \sum}_{\substack{ h_i \in \{1, \dots, p_i\} \\ \mbox{ \rm \begin{footnotesize}
     for
     \end{footnotesize}} i \in I }} \beta(h_1,\dots,h_k)  \Bigg|^2 \\&= 
     \mathop{\sum \dots \sum}_{\substack{ h_i \in \{1, \dots, p_i\} \\ \mbox{ \rm \begin{footnotesize}
     for
     \end{footnotesize}} i \not\in I }}    \Bigg|  \mathop{\sum \dots \sum}_{\substack{ h_i \in \{1, \dots, p_i\} \\ \mbox{ \rm \begin{footnotesize}
     for
     \end{footnotesize}} i \in I\setminus \{1\} }} \!\! \Bigg(\sum_{h_1 \in S_1}  \frac{1}{|S_1|} \sum_{l_1 \in S_1}\alpha(l_1,h_2,\dots,h_k) \! \Bigg)\!+\! \Bigg(\sum_{h_1 \not\in S_1} \frac{1}{p_1-|S_1|} \sum_{l_1 \not\in S_1}\alpha(l_1,h_2,\dots,h_k) \! \Bigg)\Bigg|^2\\
      &=\mathop{\sum \dots \sum}_{\substack{ h_i \in \{1, \dots, p_i\} \\ \mbox{ \rm \begin{footnotesize}
     for
     \end{footnotesize}} i \not\in I }}  \Bigg|   \sum_{l_1 \in \{1,\dots,p_1\}}   \mathop{\sum \dots \sum}_{\substack{ h_i \in \{1, \dots, p_i\} \\ \mbox{ \rm \begin{footnotesize}
     for
     \end{footnotesize}} i \in I \setminus \{1\}}} \alpha(l_1,h_2,\dots,h_k)  \Bigg|^2= \mathop{\sum \dots \sum}_{\substack{ h_i \in \{1, \dots, p_i\} \\ \mbox{ \rm \begin{footnotesize}
     for
     \end{footnotesize}} i \not\in I }}  \Bigg|   \mathop{\sum \dots \sum}_{\substack{ h_i \in \{1, \dots, p_i\} \\ \mbox{ \rm \begin{footnotesize}
     for
     \end{footnotesize}} i \in I }} \alpha(h_1,\dots,h_k)  \Bigg|^2 .
       \end{align*}
       Hence it suffices to consider the contribution of sets $I$ with  $I \subseteq \{2,\dots, k\}$.  \medskip

       \underline{Contribution 2}: Sets with $1 \not\in I$.  We are done if we can show that for each $T_1 \in \{S_1, \{1,\dots,p_1\}\setminus S_1\}$, the following holds:
       \begin{align} \label{inequ:needtoshow} &\sum_{I \subseteq \{2,\dots,k\}}\left(\frac{(-1)^{\#I}}{\prod_{i \in I} p_i}\right)
  \sum_{h_1 \in T_1}   \mathop{\sum \dots \sum}_{\substack{ h_i \in \{1, \dots, p_i\} \\ \mbox{ \rm \begin{footnotesize}
     for
     \end{footnotesize}} i \in \{2,\dots,k\}\setminus I }}  \Bigg(   \mathop{\sum \dots \sum}_{\substack{ h_i \in \{1, \dots, p_i\} \\ \mbox{ \rm \begin{footnotesize}
     for
     \end{footnotesize}} i \in I }} \alpha(h_1,\dots,h_k)  \Bigg)^2  \nonumber \\  
     \geq 
     &\sum_{I \subseteq \{2,\dots,k\}}\left(\frac{(-1)^{\#I}}{\prod_{i \in I} p_i}\right)
    \sum_{h_1 \in T_1} \mathop{\sum \dots \sum}_{\substack{ h_i \in \{1, \dots, p_i\} \\ \mbox{ \rm \begin{footnotesize}
     for
     \end{footnotesize}} i \in \{2,\dots,k\}\setminus I }}  \Bigg( \mathop{\sum \dots \sum}_{\substack{ h_i \in \{1, \dots, p_i\} \\ \mbox{ \rm \begin{footnotesize}
     for
     \end{footnotesize}} i \in I }} \beta(h_1,\dots,h_k)  \Bigg)^2.
       \end{align}       
      For this purpose, we  define the function $\gamma(t;h_1,\dots,h_k)=t\alpha(h_1,\dots,h_k)+(1-t)\beta(h_1,\dots,h_k)$. The following expression is a polynomial  of degree at most $2$ in $t$:
       \begin{align*} 
   \Phi(t)= \sum_{I \subseteq \{2,\dots,k\}}\left(\frac{(-1)^{\#I}}{\prod_{i \in I} p_i}\right)
     \sum_{h_1 \in T_1}   \mathop{\sum \dots \sum}_{\substack{ h_i \in \{1, \dots, p_i\} \\ \mbox{ \rm \begin{footnotesize}
     for
     \end{footnotesize}} i \in \{2,\dots,k\}\setminus I }} \Bigg(   \mathop{\sum \dots \sum}_{\substack{ h_i \in \{1, \dots, p_i\} \\ \mbox{ \rm \begin{footnotesize}
     for
     \end{footnotesize}} i \in I }} \gamma(t;h_1,\dots,h_k)  \Bigg)^2.
     \end{align*}
       Differentiating with respect to $t$,  we get that $\Phi'(t)$ equals
             \begin{align*} 
\sum_{I \subseteq \{2,\dots,k\}}\left(\frac{(-1)^{\#I}}{\prod_{i \in I} p_i}\right)
    \sum_{h_1 \in T_1} \!\!\!  \mathop{\sum \dots \sum}_{\substack{ h_i \in \{1, \dots, p_i\} \\ \mbox{ \rm \begin{footnotesize}
     for
     \end{footnotesize}} i \in \{2,\dots,k\}\setminus I }} \!\!\!\! 2\Bigg(   \mathop{\sum \dots \sum}_{\substack{ h_i \in \{1, \dots, p_i\} \\ \mbox{ \rm \begin{footnotesize}
     for
     \end{footnotesize}} i \in I }} \gamma(t;h_1,\dots,h_k)  \Bigg)\Bigg(   \mathop{\sum \dots \sum}_{\substack{ h_i \in \{1, \dots, p_i\} \\ \mbox{ \rm \begin{footnotesize}
     for
     \end{footnotesize}} i \in I }} (\alpha-\beta)(h_1,\dots,h_k) \Bigg).
     \end{align*}
     Note that if we only consider $h_1 \in T_1$, then $\gamma(0;h_1,\dots,h_k) = \beta(h_1,\dots,h_k) = \frac{1}{|T_1|} \sum_{l_1 \in T_1} \alpha(l_1,h_2,\dots,h_k)$ does not depend on $h_1$. Write $\gamma(0;h_1,\dots,h_k)=\gamma^*(h_2,\dots,h_k)$ for   $h_1 \in T_1$. In particular, we have
         \begin{align*} \Phi'(0)=\!\!\!\! 
\sum_{I \subseteq \{2,\dots,k\}}\!\!\!\left(\frac{2(-1)^{\#I}}{\prod_{i \in I} p_i}\right)
    \!\!\!\!\!\!  \mathop{\sum \dots \sum}_{\substack{ h_i \in \{1, \dots, p_i\} \\ \mbox{ \rm \begin{footnotesize}
     for
     \end{footnotesize}} i \in \{2,\dots,k\}\setminus I }} \!\!\! \!\!\!\! \Bigg(   \mathop{\sum \dots \sum}_{\substack{ h_i \in \{1, \dots, p_i\} \\ \mbox{ \rm \begin{footnotesize}
     for
     \end{footnotesize}} i \in I }} \gamma^*(h_2,\dots,h_k)\! \Bigg)\!\!\sum_{h_1 \in T_1}\!\!\Bigg(   \mathop{\sum \dots \sum}_{\substack{ h_i \in \{1, \dots, p_i\} \\ \mbox{ \rm \begin{footnotesize}
     for
     \end{footnotesize}} i \in I }} (\alpha-\beta)(h_1,\dots,h_k)\! \Bigg).
     \end{align*}
     But for any $h_2, \dots, h_k$, we have 
     \begin{align*}
   \sum_{h_1 \in T_1}  (\alpha-\beta)(h_1,\dots,h_k) =  \sum_{h_1 \in T_1}\alpha(h_1,\dots,h_k) -   \sum_{h_1 \in T_1} \frac{1}{|T_1|}\sum_{l_1 \in T_1} \alpha(l_1,h_2, \dots,h_k) =0.
     \end{align*}
     Hence we now know that $\Phi'(0)=0$. \medskip
     
      Next we compute the second derivative of $\Phi(t)$. 
      \begin{align*}
      \Phi''(t)&=\sum_{I \subseteq \{2,\dots,k\}}\left(\frac{(-1)^{\#I}}{\prod_{i \in I} p_i}\right)
    \sum_{h_1 \in T_1} \!\!\!  \mathop{\sum \dots \sum}_{\substack{ h_i \in \{1, \dots, p_i\} \\ \mbox{ \rm \begin{footnotesize}
     for
     \end{footnotesize}} i \in \{2,\dots,k\}\setminus I }} \!\!\!\! 2\Bigg(   \mathop{\sum \dots \sum}_{\substack{ h_i \in \{1, \dots, p_i\} \\ \mbox{ \rm \begin{footnotesize}
     for
     \end{footnotesize}} i \in I }} (\alpha-\beta)(h_1,\dots,h_k) \Bigg)^2 \\
     &=\frac{2}{p_2 \dots p_k} \sum_{h_1 \in T_1}\Bigg\{\sum_{I \subseteq \{2,\dots,k\}}(-1)^{\#I} \Big(\prod_{\substack{i=2\\ i \not\in I}}^{k}p_i \Big)
    \!\!\!  \mathop{\sum \dots \sum}_{\substack{ h_i \in \{1, \dots, p_i\} \\ \mbox{ \rm \begin{footnotesize}
     for
     \end{footnotesize}} i \in \{2,\dots,k\}\setminus I }} \!\!\!\! \Bigg(   \mathop{\sum \dots \sum}_{\substack{ h_i \in \{1, \dots, p_i\} \\ \mbox{ \rm \begin{footnotesize}
     for
     \end{footnotesize}} i \in I }} (\alpha-\beta)(h_1,\dots,h_k) \Bigg)^2\Bigg\}.
     \end{align*}
Using orthogonality relations for exponential sums, we notice  
\begingroup
\allowdisplaybreaks
\begin{align*}
&\mathop{\sum \dots \sum}_{\substack{ s_i \in \{1, \dots, p_i-1\} \\ \mbox{ \rm \begin{footnotesize}
     for
     \end{footnotesize}} i \in \{2,\dots,k\} }}\Bigg|   \mathop{\sum \dots \sum}_{\substack{ h_i \in \{1, \dots, p_i\} \\ \mbox{ \rm \begin{footnotesize}
     for
     \end{footnotesize}} i \in \{2,\dots,k\} }}  \exp\left(\frac{2\pi i h_2s_2}{p_2}\right) \dots\exp\left(\frac{2\pi i h_ks_k}{p_k} \right)(\alpha-\beta)(h_1,\dots,h_k)  \Bigg|^2\\
     =& \mathop{\sum \dots \sum}_{\substack{ h_i, v_i \in \{1, \dots, p_i\} \\ \mbox{ \rm \begin{footnotesize}
     for
     \end{footnotesize}} i \in \{2,\dots,k\} }}(\alpha-\beta)(h_1,h_2,\dots,h_k) (\alpha-\beta)(h_1,v_2,\dots,v_k) \sum_{s_2=1}^{p_2-1} \dots \sum_{s_k=1}^{p_k-1}  \Bigg(\prod_{j=2}^k\exp\left(\frac{2\pi i (h_j-v_j)s_j}{p_j}\right)\Bigg) 
     \\
          =& \mathop{\sum \dots \sum}_{\substack{ h_i, v_i \in \{1, \dots, p_i\} \\ \mbox{ \rm \begin{footnotesize}
     for
     \end{footnotesize}} i \in \{2,\dots,k\} }}(\alpha-\beta)(h_1,h_2,\dots,h_k) (\alpha-\beta)(h_1,v_2,\dots,v_k)\prod_{j=2}^k \Bigg(-1+ \sum_{s_j=1}^{p_j}    \exp\left(\frac{2\pi i (h_j-v_j)s_j}{p_j}\right) \Bigg)
     \\
     =& \mathop{\sum \dots \sum}_{\substack{ h_i, v_i \in \{1, \dots, p_i\} \\ \mbox{ \rm \begin{footnotesize}
     for
     \end{footnotesize}} i \in \{2,\dots,k\} }}(\alpha-\beta)(h_1,\dots,h_k) (\alpha-\beta)(h_1,v_2,\dots,v_k) \!\!\!\!\!\! \sum_{I \subseteq \{2, \dots, k\}}  \!\!\! (-1)^{\# I} \prod_{\substack{j \not\in I\\j\neq 1}} \Bigg( \sum_{s_j=1}^{p_j} \! \exp\! \left(\frac{2\pi i (h_j-v_j)s_j}{p_j}\right)\! \Bigg)
          \\
     =& \sum_{I \subseteq \{2,\dots,k\}}(-1)^{\#I}\Big(\prod_{\substack{ j=2 \\ j \not \in I }}^{k} p_j \Big) \mathop{\sum \dots \sum}_{\substack{ h_i \in \{1, \dots, p_i\} \\ \mbox{ \rm \begin{footnotesize}
     for
     \end{footnotesize}} i \in \{2,\dots,k\}\setminus I }}\Bigg(\mathop{\sum \dots \sum}_{\substack{ h_i \in \{1, \dots, p_i\} \\ \mbox{ \rm \begin{footnotesize}
     for
     \end{footnotesize}} i \in I }}(\alpha-\beta)(h_1,\dots,h_k)     \Bigg)^2.
      \end{align*}
      \endgroup
      Hence we now know that $\Phi''(t)$ equals
       \begin{align*}
      \frac{2}{p_2 \dots p_k} \sum_{h_1 \in T_1}\Bigg\{\mathop{\sum \dots \sum}_{\substack{ s_i \in \{1, \dots, p_i-1\} \\ \mbox{ \rm \begin{footnotesize}
     for
     \end{footnotesize}} i \in \{2,\dots,k\} }}\Bigg|   \mathop{\sum \dots \sum}_{\substack{ h_i \in \{1, \dots, p_i\} \\ \mbox{ \rm \begin{footnotesize}
     for
     \end{footnotesize}} i \in \{2,\dots,k\} }}  \exp\left(\frac{2\pi i h_2s_2}{p_2}\right) \dots\exp\left(\frac{2\pi i h_ks_k}{p_k} \right)(\alpha-\beta)(h_1,\dots,h_k)  \Bigg|^2\Bigg\}
     \end{align*}
     and therefore $\Phi''(t) \geq 0$ for all $t$. Since $\Phi(t)$ is a polynomial of degree at most $2$ and  $\Phi'(0)=0$, this implies $\Phi(t) \geq \Phi(0)$ for all $t$. In particular, $\Phi(1) \geq \Phi(0)$. But this is precisely the inequality (\ref{inequ:needtoshow}) we needed to show. This concludes the proof.
\end{proof}

We can combine Lemma~\ref{lem:largesieve.denominator} and Lemma~\ref{lem:usedinlargesieve} as follows:

\begin{cor}\label{cor:denominator.rearrangement}
Let $\mathcal{A} \subseteq [1,N] \cap \mathbb{N}$. Let $p_1, \dots, p_k$ be distinct primes,   $\Omega(p_i) \subset \mathbb{Z}/p_i\mathbb{Z}$ with $\Omega(p_i) \neq \emptyset$ and
\begin{align*}
\Lambda(e_1,\dots,e_k) = \#\left\{n \in \mathcal{A}: \begin{array}{l} n  \mbox{ \rm mod } p_i\in \Omega(p_i)  \mbox{ when } e_i=0 \\  n \mbox{ \rm mod } p_i\not\in \Omega(p_i)   \mbox{ when } e_i=1 \end{array}\right\}.
\end{align*}
Set $t_i(0) = |\Omega(p_i)|$ and $t_i(1) = p_i - |\Omega(p_i)|$. We then have the following lower bound: 
 \begin{align*}
         \sum_{\substack{a=1 \\ (a,p_1\dots p_k)=1}}^{p_1\dots p_k} \left|\sum_{\substack{n\leq N \\ n \in \mathcal{A}}} \exp\left(\frac{2 \pi i n a}{p_1\dots p_k} \right) \right|^2   \geq \!\!
       \sum_{I \subseteq \{1, \dots, k\}} (-1)^{\# I}
          \mathop{\sum \dots \sum}_{\substack{ e_i \in \{0,1\}\\ \mbox{ \rm \begin{footnotesize}
     for
     \end{footnotesize}} i \not\in I }} \prod_{i   \not\in I} \Big(\frac{p_i}{t_i(e_i)} \Big)  \Bigg(     \mathop{\sum \dots \sum}_{\substack{ e_i \in \{0,1\}\\ \mbox{ \rm \begin{footnotesize}
     for
     \end{footnotesize}} i \in I }} \Lambda(e_1,\dots,e_k)   \Bigg)^2.
    \end{align*}
\end{cor}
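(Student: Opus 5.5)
Write $d=p_1\cdots p_k$. By the Chinese remainder theorem, residue classes $h$ mod $d/e$ (with $e\mid d$) correspond to tuples $(h_i)_{i\notin I}$, where $I=\{i: p_i\mid e\}$. Define
\[
\alpha(h_1,\dots,h_k)=\#\{n\in\mathcal{A}: n\equiv h_i \bmod p_i \mbox{ for all } i\}.
\]
Then the sum over $n\le N$, $n\equiv h \bmod d/e$ of $1_{\mathcal{A}}(n)$ equals $\sum_{h_i,\, i\in I}\alpha(h_1,\dots,h_k)$. Applying Lemma~\ref{lem:largesieve.denominator} with $a_n=1_{\mathcal{A}}(n)$, the left-hand side of the corollary becomes
\[
d\sum_{I\subseteq\{1,\dots,k\}}\frac{(-1)^{\#I}}{\prod_{i\in I}p_i}\mathop{\sum\dots\sum}_{h_i,\ i\notin I}\Big(\mathop{\sum\dots\sum}_{h_i,\ i\in I}\alpha(h_1,\dots,h_k)\Big)^2.
\]
This is exactly $d$ times the left-hand side of Lemma~\ref{lem:usedinlargesieve}.

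Next apply Lemma~\ref{lem:usedinlargesieve} with $U=\{1,\dots,k\}$ and $S_i=\Omega(p_i)$, identifying $\mathbb{Z}/p_i\mathbb{Z}$ with $\{1,\dots,p_i\}$. The sets $S_i$ are non-empty by hypothesis. If $S_i$ is all of $\{1,\dots,p_i\}$, the complement class is empty; the lemma requires $S_i$ to be a proper subset, and I will assume this as the paper does (the notation $\Omega(p_i)\subset$ suggests it is intended). The lemma then says that the expression above is at least the same expression with $\alpha$ replaced by $\beta$. Encode $h_i$ by $e_i(h_i)=0$ if $h_i\in\Omega(p_i)$ and $e_i(h_i)=1$ otherwise. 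Then $\beta(h)$ depends only on $(e_1,\dots,e_k)$:
\[
\beta(h)=\frac{\Lambda(e_1,\dots,e_k)}{\prod_i t_i(e_i)},
\]
since summing $\alpha$ over $l_i\in T_i$ for all $i$ counts exactly the $n\in\mathcal{A}$ with the prescribed membership pattern.

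Now evaluate the $\beta$-expression for fixed $I$.
\begin{itemize}
\item Inner sum over $h_i$, $i\in I$: group by $e_i$; each value $e_i$ occurs $t_i(e_i)$ times. The factors $t_i(e_i)$ cancel the denominators, giving
\[
\Big(\prod_{i\notin I}t_i(e_i)\Big)^{-1}\sum_{e_i,\ i\in I}\Lambda(e).
\]
\item Outer sum over $h_i$, $i\notin I$: after squaring, group again by $e_i$, which contributes a factor $t_i(e_i)$ for each $i\notin I$. The net weight is $\prod_{i\notin I}t_i(e_i)^{-1}$.
\end{itemize}
Multiplying by the prefactor $d/\prod_{i\in I}p_i=\prod_{i\notin I}p_i$ yields exactly $\prod_{i\notin I}\big(p_i/t_i(e_i)\big)$ times the squared sum of $\Lambda$ over $e_i$, $i\in I$, with the sign $(-1)^{\#I}$. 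This is precisely the stated right-hand side.

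The only real step is the appeal to Lemma~\ref{lem:usedinlargesieve}; the rest is CRT bookkeeping. The main point needing care is the precise match between the averaging sets $T_i$ in Lemma~\ref{lem:usedinlargesieve} and the $0/1$ pattern defining $\Lambda$, so that the powers of $t_i(e_i)$ cancel correctly between the inner and outer sums.
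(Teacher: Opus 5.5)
Your proof is correct and follows the paper's argument. You rewrite the left-hand side via Lemma~\ref{lem:largesieve.denominator} and the Chinese remainder theorem in terms of $\alpha(h_1,\dots,h_k)$, apply Lemma~\ref{lem:usedinlargesieve} with $U=\{1,\dots,k\}$ and $S_i$ representing $\Omega(p_i)$, and collapse the $\beta$-expression by grouping the $h_i$ according to $(e_1,\dots,e_k)$, which produces the weights $\prod_{i\notin I}p_i/t_i(e_i)$. Your caveat that $\Omega(p_i)$ must be a proper subset is the same implicit assumption the paper makes; it is needed so that $t_i(1)\neq 0$ on the right-hand side.
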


\begin{proof}
Applying Lemma~\ref{lem:largesieve.denominator} with $d = p_1 \dots p_k$, we have
 \begin{align}
         &\sum_{\substack{a=1 \\ (a,p_1\dots p_k)=1}}^{p_1\dots p_k} \left|\sum_{\substack{n\leq N \\ n \in \mathcal{A}}} \exp\left(\frac{2 \pi i n a}{p_1\dots p_k} \right) \right|^2  = \Big(\prod_{i=1}^k p_i \Big)\sum_{e \mid p_1\dots p_k} \left(\frac{\mu(e)}{e}\right)
       \sum_{h=1}^{(p_1\dots p_k)/e} \Bigg|\sum_{\substack{n\leq N \\ n \equiv h (p_1 \dots p_k/e)}} 1_{\mathcal{A}}(n)    \Bigg|^2 \nonumber \\
       &= \Big(\prod_{i=1}^k p_i \Big)\sum_{I \subseteq \{1, \dots, k\}} \left(\frac{(-1)^{\# I}}{\prod_{i \in I} p_i}\right)
          \mathop{\sum \dots \sum}_{\substack{ h_i \in \{1, \dots, p_i\}   \\ \mbox{ \rm \begin{footnotesize}
     for
     \end{footnotesize}} i \not\in I }}\Bigg|    \mathop{\sum \dots \sum}_{\substack{ h_i \in \{1, \dots, p_i\} \\ \mbox{ \rm \begin{footnotesize}
     for
     \end{footnotesize}} i \in I }}\Bigg(\sum_{\substack{n\leq N \\ \forall i: n \equiv h_i (p_i) }} 1_{\mathcal{A}}(n)  \Bigg)   \Bigg|^2. \label{cor:denominator.step1}
     \end{align}

   We recall Lemma~\ref{lem:usedinlargesieve}. For $U = \{1,\dots,k\}$ and $S_i \subset  \{1,\dots,p_i\}$ with $(S_i \mbox{ mod } p_i) = \Omega(p_i)$ and some arbitrary real valued function $\alpha(h_1, \dots, h_k)$,  the lemma gives  
    \begin{align}  &\sum_{I \subseteq \{1,\dots,k\}}\left(\frac{(-1)^{\#I}}{\prod_{i \in I} p_i}\right)
     \mathop{\sum \dots \sum}_{\substack{ h_i \in \{1, \dots, p_i\} \\ \mbox{ \rm \begin{footnotesize}
     for
     \end{footnotesize}} i \not\in I }}  \Bigg(   \mathop{\sum \dots \sum}_{\substack{ h_i \in \{1, \dots, p_i\} \\ \mbox{ \rm \begin{footnotesize}
     for
     \end{footnotesize}} i \in I }} \alpha(h_1,\dots,h_k)  \Bigg)^2 \nonumber \\ 
     &\geq \nonumber
     \sum_{I \subseteq \{1,\dots,k\}}\left(\frac{(-1)^{\#I}}{\prod_{i \in I} p_i}\right)
     \mathop{\sum \dots \sum}_{\substack{ h_i \in \{1, \dots, p_i\} \\ \mbox{ \rm \begin{footnotesize}
     for
     \end{footnotesize}} i \not\in I }}  \Bigg( \mathop{\sum \dots \sum}_{\substack{ h_i \in \{1, \dots, p_i\} \\ \mbox{ \rm \begin{footnotesize}
     for
     \end{footnotesize}} i \in I }} \left(\prod_{i =1}^k \frac{1}{|T_i|}\right)\sum_{l_1 \in T_1}\!\! \dots \!\! \sum_{l_k \in T_k} \alpha(l_1,\dots,l_k)  \Bigg)^2 \,\, \\&\mbox{ where } \,\, T_i = \begin{cases}
S_i \quad \quad\quad\quad\quad\,\,\,\,\, \mbox{ if } ( h_i \mbox{ mod } p_i ) \in \Omega(p_i), \\
\{1,\dots,p_i\} \setminus S_i \,\, \mbox{ if }  (h_i \mbox{ mod } p_i) \not\in \Omega(p_i)
\end{cases}  \nonumber \\
 &= \label{cor:denominator.step2}
     \sum_{I \subseteq \{1,\dots,k\}}\left(\frac{(-1)^{\#I}}{\prod_{i \in I} p_i}\right)
     \mathop{\sum \dots \sum}_{\substack{ e_i \in \{0,1\} \\ \mbox{ \rm \begin{footnotesize}
     for
     \end{footnotesize}} i \not\in I }} \Bigg( \prod_{i \not \in I} \frac{1}{|T_i(e_i)|} \Bigg)  \Bigg( \mathop{\sum \dots \sum}_{\substack{ e_i \in \{0,1\} \\ \mbox{ \rm \begin{footnotesize}
     for
     \end{footnotesize}} i \in I }} \sum_{l_1 \in T_1(e_1)}\!\! \dots \!\! \sum_{l_k \in T_k(e_k)} \alpha(l_1,\dots,l_k)  \Bigg)^2
     \\
     &\mbox{ where } T_i(0) = S_i \mbox{ and }  T_i(1)= \{1,\dots,p_i\} \setminus S_i.  \nonumber
       \end{align}

 We now apply inequality (\ref{cor:denominator.step2})   to identity (\ref{cor:denominator.step1}) 
by choosing  $$\alpha(h_1,\dots,h_k) =\Bigg(\sum_{\substack{n\leq N \\ \forall i: n \equiv h_i (p_i) }} 1_{\mathcal{A}}(n)  \Bigg)= \#\{n \in \mathcal{A}: n \equiv h_i \mbox{ mod } p_i \mbox{ for } i \in \{1,\dots, k\}\}.$$ 
  We also  recall  that $|T_i(0)|=|S_i|=|\Omega(p_i)|=t_i(0)$ and $|T_i(1)| = t_i(1)$ and note that
 \begin{align*}
 \Lambda(e_1,\dots,e_k) =   \sum_{l_1 \in T_1(e_1)}\!\! \dots \!\! \sum_{l_k \in T_k(e_k)} \alpha(l_1,\dots,l_k)
 \end{align*}
for  $T_i(0) = S_i \mbox{ and }  T_i(1)= \{1,\dots,p_i\} \setminus S_i$. This gives the following lower bound:
 \begin{align*}
         \sum_{\substack{a=1 \\ (a,p_1\dots p_k)=1}}^{p_1\dots p_k} \left|\sum_{\substack{n\leq N \\ n \in \mathcal{A}}} \exp\left(\frac{2 \pi i n a}{p_1\dots p_k} \right) \right|^2  \geq \!\! 
          \sum_{I \subseteq \{1,\dots,k\}}\!\!\!(-1)^{\#I}
     \mathop{\sum \dots \sum}_{\substack{ e_i \in \{0,1\} \\ \mbox{ \rm \begin{footnotesize}
     for
     \end{footnotesize}} i \not\in I }} \Bigg( \prod_{i \not \in I} \frac{p_i}{t_i(e_i)} \Bigg)  \Bigg( \mathop{\sum \dots \sum}_{\substack{ e_i \in \{0,1\} \\ \mbox{ \rm \begin{footnotesize}
     for
     \end{footnotesize}} i \in I }} \Lambda(e_1,\dots,e_k) \Bigg)^2.
\end{align*} 
  But this is exactly what we wanted to show.
\end{proof}

   \subsection{Underrepresented residue classes}\label{ssec:unusually.rare} 

We now recall what the sets treated by Theorem~\ref{thm:largesieveimprovement} look like:  For given $\mathcal{P} \subseteq \{p \in [\tfrac{1}{2}M,M]: p \mbox{ prime}\}$ and   $\Omega(p) \subset \mathbb{Z}/p\mathbb{Z}$, we think about  $\mathcal{A} \subseteq [1,N] \cap \mathbb{N}$ which have the property that for every $p \in \mathcal{P}$ and $c \in    \Omega(p)$,   $$|\{\overline{a} \in \mathbb{Z}/p^2\mathbb{Z}: a \in \mathcal{A}, a \equiv c \mbox{ mod } p\}| \leq \Theta.$$ Given that for each $c \in    \Omega(p)$, set $\mathcal{A}$ occupies only $\Theta$ residue classes modulo $p^2$, this suggests that residue classes  $c \in \Omega(p)$ are underrepresented in $\mathcal{A}$ compared to residue classes  $c \not \in \Omega(p)$.  

\medskip

For instance, we might be interested in sets $\mathcal{A} \subseteq [1,N]\cap \mathbb{N}$ which satisfy the following property for some $\varepsilon \in (0,1)$:  For all $j \in \{1, \dots, k\}$ and all products $p_1 \dots p_j \leq  N^{1/2}$ of $j$ distinct primes,  
\begin{align*}
 \#\left\{n \in \mathcal{A}: ( n  \mbox{ \rm mod } p_i )\in \Omega(p_i)  \mbox{ for } 1 \leq i \leq j\right\}\leq \varepsilon\prod_{\substack{ i =1}}^j \left(\frac{ \omega(p_i) }{p_i} \right) |\mathcal{A}|,
\end{align*} 
where $\omega(p_i)=|\Omega(p_i)|$. 
This means that for any product $p_1\dots p_j$ of at most $k$ primes less than $N^{1/2}$, the residue classes corresponding to $\Omega(p_1) \times \dots \times \Omega(p_j)$  are underrepresented in $\mathcal{A}$. In this case the results of Section~\ref{ssec:denominator.manipulation} can be used to give a nice upper bound on $|\mathcal{A}|$, which is not much larger than the bound we would get if the residue classes $\Omega(p_i)$ were missing entirely rather than just rare.

\begin{lemma}\label{lem:uncommon.residue.classes}
Let $\mathcal{A} \subseteq [1,N] \cap \mathbb{N}$. Let $\mathcal{P}$ be a subset of the primes. Let $\varepsilon \in (0,1)$. 
Let $\Omega(p) \subseteq \mathbb{Z}/p\mathbb{Z}$.

Suppose that for all $j \in \{1, \dots, K\}$ and all  $p_1 \dots p_j \leq  N^{1/2}$ with   $p_1, \dots, p_j \in \mathcal{P}$ and $p_1, \dots, p_j$ distinct,
\begin{align}\label{inequ:uncommon.residue.classes0}
 \#\left\{n \in \mathcal{A}: ( n  \mbox{ \rm mod } p_i)\in \Omega(p_i)  \mbox{ for } 1 \leq i \leq j\right\}\leq \varepsilon\prod_{\substack{ i =1}}^j \left(\frac{ \omega(p_i) }{p_i} \right) |\mathcal{A}|.
\end{align}
Then the following  upper bound is satisfied: 
 \begin{align*}
 |\mathcal{A}| \leq  \frac{2N}{ 1 -2^{K+2}\varepsilon} \left(        \sum_{\substack{d \leq N^{1/2}   }}^* \mu(d)^2 \prod_{ p \mid d} \left(\frac{\omega(p)}{p-\omega(p)} \right) \right)^{-1},
    \end{align*}
    where the sum $\sum^*$ is restricted to integers $d$ which are products of at most $K$ primes in $\mathcal{P}$.
\end{lemma}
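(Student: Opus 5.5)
The plan is to apply Linnik's large sieve (Lemma~\ref{lem:largesieve.linnik}) with $a_n=1_{\mathcal{A}}(n)$, keep on the left only the squarefree moduli $d=p_1\cdots p_k\le N^{1/2}$ with $k\le K$ and all $p_i\in\mathcal{P}$, and bound each of these terms from below by Corollary~\ref{cor:denominator.rearrangement}. It then suffices to prove, for each such $d$,
\begin{align*}
Q(\Lambda):=\sum_{I \subseteq \{1, \dots, k\}} (-1)^{\# I}\mathop{\sum \dots \sum}_{\substack{ e_i \in \{0,1\}\\ i \not\in I }} \prod_{i   \not\in I} \Big(\frac{p_i}{t_i(e_i)} \Big)  \Bigg(\mathop{\sum \dots \sum}_{\substack{ e_i \in \{0,1\}\\ i \in I }} \Lambda(e_1,\dots,e_k)   \Bigg)^2 \geq (1-2^{K+2}\varepsilon)\,|\mathcal{A}|^2\prod_{i=1}^k\frac{\omega(p_i)}{p_i-\omega(p_i)} .
\end{align*}
Summing this over $d$ and dividing by $2N|\mathcal{A}|$ gives the claimed bound; the case $d=1$ contributes $|\mathcal{A}|^2$, which is consistent. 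As a sanity check, if $\Lambda$ is concentrated at $e=(1,\dots,1)$, the terms telescope to $|\mathcal{A}|^2\prod_i\big(\frac{p_i}{p_i-\omega_i}-1\big)$, exactly the main term.

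\emph{Step 1: diagonalise $Q$.} I expect to write $Q$ as a sum of squares by the tensor-product identity
\begin{align*}
Q(\Lambda)=\sum_{e\in\{0,1\}^k}\prod_{i=1}^k\frac{p_i}{t_i(e_i)}\Bigg(\sum_{e'\in\{0,1\}^k}\prod_{i=1}^k\Big(\delta_{e_i,e'_i}-\frac{t_i(e_i)}{p_i}\Big)\Lambda(e')\Bigg)^2 .
\end{align*}
Both sides are quadratic forms that factor as tensor products over $i$, so it suffices to check the one-variable case. For $k=1$, writing $\Lambda_0,\Lambda_1$, the left side is $\frac{p}{\omega}\Lambda_0^2+\frac{p}{p-\omega}\Lambda_1^2-(\Lambda_0+\Lambda_1)^2$. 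On the right, the bracket for $e=0$ equals $\frac{(p-\omega)\Lambda_0-\omega\Lambda_1}{p}$, and the bracket for $e=1$ is its negative. The right side is therefore $\frac{p^2}{\omega(p-\omega)}\big(\frac{(p-\omega)\Lambda_0-\omega\Lambda_1}{p}\big)^2$, which agrees with the left side on expansion. I consider this identity the main technical step and would verify it carefully, including that the $(-1)^{\#I}$ structure matches the tensor product of the one-dimensional forms $\frac{p}{t(e)}x_e^2-(\sum x)^2$.

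\emph{Step 2: evaluate each bracket.} Write $\delta_{e_i,e'_i}$ in terms of $\delta_{0,e'_i}$; for $e_i=1$ this uses $\delta_{1,e'_i}=1-\delta_{0,e'_i}$. Then expand the bracket as a sum over $J\subseteq\{1,\dots,k\}$ of the quantities $\Lambda_J:=\#\{n\in\mathcal{A}: n \bmod p_i\in\Omega(p_i)\ \forall i\in J\}$. Each coefficient is $\pm\prod_{i\notin J}\omega_i/p_i$, up to sign and the factors $\delta$. Since $\Lambda_\emptyset=|\mathcal{A}|$, the bracket equals $\pm|\mathcal{A}|\prod_i\frac{\omega_i}{p_i}$ plus an error. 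By hypothesis~(\ref{inequ:uncommon.residue.classes0}), applied to the subproducts of $d$ (which are $\le N^{1/2}$ and have at most $K$ prime factors), each $J\ne\emptyset$ contributes at most $\varepsilon|\mathcal{A}|\prod_i\omega_i/p_i$. The bracket is thus $\pm|\mathcal{A}|\prod_i\frac{\omega_i}{p_i}\,(1+\theta)$ with $|\theta|\le 2^k\varepsilon$.

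\emph{Step 3: sum over $e$.} Squaring gives a factor $(1+\theta)^2\ge 1-2^{k+1}\varepsilon$. Summing the weights gives $\sum_e\prod_i\frac{p_i}{t_i(e_i)}\frac{\omega_i^2}{p_i^2}=\prod_i\frac{\omega_i^2}{p_i}\big(\frac1{\omega_i}+\frac1{p_i-\omega_i}\big)=\prod_i\frac{\omega_i}{p_i-\omega_i}$. Hence $Q\ge(1-2^{k+1}\varepsilon)|\mathcal{A}|^2\prod_i\frac{\omega_i}{p_i-\omega_i}$, which is stronger than needed since $k\le K$.

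Two points need care. First, the degenerate cases $\omega(p)\in\{0,p\}$: if $\omega(p)=0$ the factor vanishes, and if $\omega(p)=p$ then $\mathcal{A}$ is empty or the hypothesis is violated. Second, we may assume $2^{K+2}\varepsilon<1$, since otherwise there is nothing to prove.
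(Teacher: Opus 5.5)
Your proof is correct, but you obtain the key lower bound by a different route from the paper's.

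Both arguments use Lemma~\ref{lem:largesieve.linnik} and Corollary~\ref{cor:denominator.rearrangement} to reduce to bounding the same form $Q(\Lambda)$ from below for each $d=p_1\cdots p_k$.

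The paper expands $Q$ in the atoms as $\sum_{\underline e,\underline u}(-1)^{|M(\underline e,\underline u)|}\Lambda(\underline e)\Lambda(\underline u)\prod_{i\notin M(\underline e,\underline u)}\frac{p_i-t_i(e_i)}{t_i(e_i)}$. It then:
\begin{itemize}
\item discards every diagonal term except $\underline e=\underline 1$;
\item bounds all cross terms in absolute value via $\Lambda(\underline e)\le\varepsilon\prod_{e_i=0}\frac{\omega(p_i)}{p_i}|\mathcal A|$ for $\underline e\neq\underline 1$;
\item uses $\Lambda(\underline 1)\ge(1-2^k\varepsilon)|\mathcal A|$.
\end{itemize}
This yields $(1-2^k\varepsilon)^2-4^k\varepsilon^2-2^{k+1}\varepsilon=1-2^{k+2}\varepsilon$.

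You instead diagonalise $Q$ exactly. In fact the two rows of each one-prime matrix $\big(\delta_{e,e'}-t_i(e)/p_i\big)$ are negatives of each other. So all $2^k$ of your brackets agree up to sign, and $Q$ is a single square:
\[
Q(\Lambda)=\prod_{i=1}^k\frac{\omega(p_i)}{p_i-\omega(p_i)}\Bigg(\sum_{J\subseteq\{1,\dots,k\}}(-1)^{|J|}\prod_{i\in J}\frac{p_i}{\omega(p_i)}\,\Lambda_J\Bigg)^2 .
\]
Your inclusion--exclusion over the unrestricted counts $\Lambda_J$, which are exactly what the hypothesis controls, then gives the sharper factor $1-2(2^k-1)\varepsilon\ge 1-2^{k+1}\varepsilon$, with nothing discarded.

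Your route buys an exact identity, a transparent account of the main term, and a better constant. The paper's cruder atom-by-atom expansion is the form it goes on to adapt in the proof of Lemma~\ref{lem:proof.prop1.rare} (see (\ref{inequ:cor2})). There only the $j=k$ case of the hypothesis is available, so the intermediate counts $\Lambda_J$ with $0<|J|<k$ are not controlled for an individual modulus. Your per-modulus inclusion--exclusion would therefore not apply there as it stands.
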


Comparing Lemma~\ref{lem:uncommon.residue.classes} with Montgomery's large sieve (see Lemma~\ref{lem:largesieve.montgomery}), we see that these upper bounds only differ  by a factor of $\frac{1}{1-2^{K+2}\varepsilon}$, but we have the additional restriction that   $d$ must have no more than $K$ prime factors.  

\medskip
In Section~\ref{sec:ostmannproofs}, Lemma~\ref{lem:uncommon.residue.classes} will be used to bound the size of $|\mathcal{A}_1 \cap [1,x]| |\mathcal{A}_2 \cap [1,x]| $ for $\mathcal{A}_1 + \mathcal{A}_2 = \mathcal{S}+\mathcal{S}$.

\begin{proof}[Proof of Lemma~\ref{lem:uncommon.residue.classes}]
Without loss of generality we assume that $\omega(p) \in \{1,\dots,p-1\}$ for all $p \in \mathcal{P}$. 

\medskip
 First we consider  a fixed product $p_1 \dots p_k$  with $p_1, \dots, p_k \in \mathcal{P}$ distinct and $k \leq K$.  
We begin by applying Corollary~\ref{cor:denominator.rearrangement}.  Taking
\begin{align*}
\Lambda(e_1,\dots,e_k) = \#\left\{n \in \mathcal{A}: \begin{array}{l} n  \mbox{ \rm mod } p_i\in \Omega(p_i)  \mbox{ when } e_i=0 \\  n \mbox{ \rm mod } p_i\not\in \Omega(p_i)   \mbox{ when } e_i=1 \end{array}\right\},
\end{align*}
 $t_i(0) = \omega(p_i)$, $t_i(1) = p_i - \omega(p_i)$ and  $M(\underline{e},\underline{u}) = \{i \in \{1,\dots,k\}: e_i \neq u_i\}$, we  have the following  bound: 
\begin{align} \label{inequ:unevenstart}
         &\sum_{\substack{a=1 \\ (a,p_1\dots p_k)=1}}^{p_1\dots p_k} \left|\sum_{\substack{n\leq N \\ n \in \mathcal{A}}} \exp\left(\frac{2 \pi i n a}{p_1\dots p_k} \right) \right|^2   \\  &\geq \sum_{I \subseteq \{1, \dots, k\}} (-1)^{\# I}
          \mathop{\sum \dots \sum}_{\substack{ e_i \in \{0,1\}\\ \mbox{ \rm \begin{footnotesize}
     for
     \end{footnotesize}} i \not\in I }}  \Bigg( \prod_{i \not \in I} \frac{p_i}{t_i(e_i)} \Bigg)\Bigg(     \mathop{\sum \dots \sum}_{\substack{ e_i \in \{0,1\}\\ \mbox{ \rm \begin{footnotesize}
     for
     \end{footnotesize}} i \in I }}\Lambda(e_1,\dots,e_k)^2  \Bigg) \nonumber  \\ 
      &+\sum_{I \subseteq \{1, \dots, k\}} (-1)^{\# I}
          \mathop{\sum \dots \sum}_{\substack{ e_i \in \{0,1\}\\ \mbox{ \rm \begin{footnotesize}
     for
     \end{footnotesize}} i \not\in I }} \Bigg( \prod_{i \not \in I} \frac{p_i}{t_i(e_i)} \Bigg) \Bigg(     \mathop{\sum \dots \sum}_{\substack{ e_i \in \{0,1\}\\ \mbox{ \rm \begin{footnotesize}
     for
     \end{footnotesize}} i \in I }}\mathop{\sum \dots \sum}_{\substack{ u_i \in \{0,1\} \mbox{\rm \begin{footnotesize}
     for
     \end{footnotesize}} i \in I \\ u_i = e_i  \mbox{\rm \begin{footnotesize}
     for
     \end{footnotesize}} i \not\in I \\ \underline{u} \neq \underline{e}}}\Lambda(\underline{e})\Lambda(\underline{u})   \Bigg) \nonumber \\
          &=     \mathop{\sum \dots \sum}_{\substack{ e_i \in \{0,1\} }}\Lambda(e_1,\dots,e_k)^2 \Bigg(  \sum_{I \subseteq \{1, \dots, k\}} (-1)^{\# I}
          \prod_{i   \not\in I} \Big(\frac{p_i}{t_i(e_i)} \Big) \Bigg) \nonumber \\
      &+ \mathop{\sum \dots \sum}_{\substack{ e_i,u_i \in \{0,1\}\\ \underline{u} \neq \underline{e}}}\Lambda(e_1,\dots,e_k)\Lambda(u_1,\dots,u_k) \Bigg(\sum_{\substack{I \subseteq \{1, \dots, k\} \\ i \in I \mbox{\rm \begin{footnotesize}
     when
     \end{footnotesize}} u_i \neq e_i  }} (-1)^{\# I}\prod_{i   \not\in I} \Big(\frac{p_i}{t_i(e_i)} \Big)   \Bigg) \nonumber \\
     &=     \mathop{\sum \dots \sum}_{\substack{ e_i \in \{0,1\} }}\Lambda(e_1,\dots,e_k)^2  \prod_{i =1}^k \left(\frac{p_i-t_i(e_i)}{t_i(e_i)} \right) \nonumber \\
      &+ \mathop{\sum \dots \sum}_{\substack{ e_i,u_i \in \{0,1\}\\ \underline{e}\neq \underline{u}}}(-1)^{|M(\underline{e},\underline{u})|}\Lambda(e_1,\dots,e_k)\Lambda(u_1,\dots,u_k)\!\!\!\! \prod_{\substack{i  \in \{1, \dots, k\} \\ i \not \in M(\underline{e},\underline{u})  }}  \!\!\left(\frac{p_i - t_i(e_i)}{t_i(e_i)}\right) \nonumber \\
      &\geq 
        \Lambda(1,\dots,1)^2  \prod_{i =1}^k \left(\frac{\omega(p_i)}{p_i-\omega(p_i)} \right)  - \mathop{\sum \dots \sum}_{\substack{ e_i,u_i \in \{0,1\}\\ \underline{e}\neq \underline{u}}}\Lambda(e_1,\dots,e_k)\Lambda(u_1,\dots,u_k)\!\!\!\! \prod_{\substack{i  \in \{1, \dots, k\} \\ i \not \in M(\underline{e},\underline{u})  }}  \!\!\left(\frac{p_i - t_i(e_i)}{t_i(e_i)}\right). \nonumber
    \end{align}

    Now recall that   $ \#\left\{n \in \mathcal{A}:  (n  \mbox{ \rm mod } p_i)\in \Omega(p_i)  \mbox{ when } i \in J\right\}\leq \varepsilon \prod_{\substack{ i \in J}} \left(\frac{\omega(p_i)}{p_i} \right)|\mathcal{A}|$  for   $J \subseteq \{1, \dots, k\}$. So  
    \begin{align} \label{inequ:sparse.upperbound1}
    \Lambda(e_1,\dots,e_k) \leq \varepsilon \prod_{\substack{ i \in \{1,\dots,k\} \\ e_i=0}} \left(\frac{\omega(p_i)}{p_i} \right)  |\mathcal{A}|
    \end{align} 
    for any $(e_1,\dots,e_k) \neq (1,\dots,1)$. We use (\ref{inequ:sparse.upperbound1}) to give upper and lower bounds for various terms in  
    \begin{align}\label{inequ:sparse.lowerbound1}  \Lambda(1,\dots,1)^2  \prod_{i =1}^k \left(\frac{\omega(p_i)}{p_i-\omega(p_i)} \right)  - \mathop{\sum \dots \sum}_{\substack{ e_i,u_i \in \{0,1\}\\ \underline{e}\neq \underline{u}}}\Lambda(e_1,\dots,e_k)\Lambda(u_1,\dots,u_k)\!\!\!\! \prod_{\substack{i  \in \{1, \dots, k\} \\ i \not \in M(\underline{e},\underline{u})  }}  \!\!\left(\frac{p_i - t_i(e_i)}{t_i(e_i)}\right). 
    \end{align}
    
 \medskip
\underline{Case 1}: Contribution of $\underline{e}, \underline{u} \in \{0,1\}^k\setminus \underline{1}$. By  (\ref{inequ:sparse.upperbound1}),   for any $(e_1,\dots, e_k), (u_1,\dots,u_k) \in \{0,1\}^k\setminus (1,\dots, 1)$, 
    \begin{align*}
    &\Lambda(e_1,\dots,e_k) \Lambda(u_1,\dots,u_k) \prod_{\substack{i  \in \{1, \dots, k\} \\ i \not \in M(\underline{e},\underline{u})  }}  \!\!\left(\frac{p_i - t_i(e_i)}{t_i(e_i)}\right) \\&\leq \varepsilon^2 \prod_{\substack{ i \in \{1,\dots,k\} \\ e_i=0}} \left(\frac{\omega(p_i)}{p_i} \right)\prod_{\substack{ i \in \{1,\dots,k\} \\ u_i=0}} \left(\frac{\omega(p_i)}{p_i} \right)  \prod_{\substack{i  \in \{1, \dots, k\} \\ e_i = u_i =0}}  \!\!\left(\frac{p_i - \omega(p_i)}{\omega(p_i)}\right)\prod_{\substack{i  \in \{1, \dots, k\} \\ e_i = u_i=1 }}  \!\!\left(\frac{\omega(p_i)}{p_i-\omega(p_i)}\right)|\mathcal{A}|^2
     \\&\leq \varepsilon^2 \prod_{\substack{ i \in \{1,\dots,k\} \\ \{e_i,u_i\}=\{0,1\}}} \left(\frac{\omega(p_i)}{p_i-\omega(p_i)} \right) \prod_{\substack{i  \in \{1, \dots, k\} \\ e_i = u_i =0}}  \!\!\left(\frac{\omega(p_i)}{p_i-\omega(p_i)} \right)\left(\frac{p_i - \omega(p_i)}{p_i}\right)^2\prod_{\substack{i  \in \{1, \dots, k\} \\ e_i = u_i=1 }}  \!\!\left(\frac{\omega(p_i)}{p_i-\omega(p_i)}\right)|\mathcal{A}|^2
      \\&\leq  \varepsilon^2 \prod_{\substack{i  \in \{1, \dots, k\} }}  \!\!\left(\frac{\omega(p_i)}{p_i-\omega(p_i)}\right)|\mathcal{A}|^2.
    \end{align*}
    Hence the contribution of $\underline{e}, \underline{u} \in \{0,1\}^k\setminus \underline{1}$ to (\ref{inequ:sparse.lowerbound1}) is bounded below by 
    \begin{align*}
    - \mathop{\sum \dots \sum}_{\substack{ e_i,u_i \in \{0,1\}\\ \underline{e}\neq \underline{u}\\ \underline{e}, \underline{u} \neq (1,\dots,1)}}\Lambda(e_1,\dots,e_k)\Lambda(u_1,\dots,u_k)\!\!\!\! \prod_{\substack{i  \in \{1, \dots, k\} \\ i \not \in M(\underline{e},\underline{u})  }}  \!\!\left(\frac{p_i - t_i(e_i)}{t_i(e_i)}\right) \geq - 4^k \varepsilon^2\prod_{\substack{i  \in \{1, \dots, k\} }}  \!\!\left(\frac{\omega(p_i)}{p_i-\omega(p_i)}\right)|\mathcal{A}|^2.
    \end{align*}

\medskip
\underline{Case 2}: Contribution of $\underline{e} \in \{0,1\}^k\setminus \underline{1}$ and $\underline{u}=1$. By  (\ref{inequ:sparse.upperbound1}),   for any $(e_1,\dots, e_k)\in \{0,1\}^k\setminus (1,\dots, 1)$, 
 \begin{align*}
    &\Lambda(e_1,\dots,e_k)\Lambda(1,\dots,1) \prod_{\substack{i  \in \{1, \dots, k\} \\ i \not \in M(\underline{e},\underline{1})  }}  \!\!\left(\frac{p_i - t_i(e_i)}{t_i(e_i)}\right)=
    \Lambda(e_1,\dots,e_k)\Lambda(1,\dots,1) \prod_{\substack{i  \in \{1, \dots, k\} \\ e_i=1 }}  \!\!\left(\frac{\omega(p_i)}{p_i-\omega(p_i)}\right) \\&\leq \varepsilon \prod_{\substack{ i \in \{1,\dots,k\} \\ e_i=0}} \left(\frac{\omega(p_i)}{p_i} \right) \prod_{\substack{i  \in \{1, \dots, k\} \\ e_i=1 }}  \!\!\left(\frac{\omega(p_i)}{p_i-\omega(p_i)}\right)|\mathcal{A}|^2
      \leq  \varepsilon \prod_{\substack{i  \in \{1, \dots, k\} }}  \!\!\left(\frac{\omega(p_i)}{p_i-\omega(p_i)}\right)|\mathcal{A}|^2.
    \end{align*}
  Hence the contribution of $\underline{e}, \underline{u} \in \{0,1\}^k$ with $\underline{e} \neq \underline{u}$ and $\underline{1} \in \{\underline{e}, \underline{u}\}$  to (\ref{inequ:sparse.lowerbound1}) is bounded below by    
     \begin{align*}
    - \mathop{\sum \dots \sum}_{\substack{ e_i,u_i \in \{0,1\}\\ \underline{e}\neq \underline{u}\\ \underline{1} \in \{\underline{e}, \underline{u}\}}}\Lambda(e_1,\dots,e_k)\Lambda(u_1,\dots,u_k)\!\!\!\! \prod_{\substack{i  \in \{1, \dots, k\} \\ i \not \in M(\underline{e},\underline{u})  }}  \!\!\left(\frac{p_i - t_i(e_i)}{t_i(e_i)}\right) \geq - 2^{k+1}\varepsilon \prod_{\substack{i  \in \{1, \dots, k\} }}  \!\!\left(\frac{\omega(p_i)}{p_i-\omega(p_i)}\right)|\mathcal{A}|^2.
    \end{align*} 

\medskip
\underline{Case 3}: Contribution of $\underline{e},\underline{u}=1$. Note that $\Lambda(1,\dots,1) = |\mathcal{A}|  - \sum_{ \underline{e} \neq \underline{1}} \Lambda(e_1,\dots,e_k)$. Hence by (\ref{inequ:sparse.upperbound1}), 
\begin{align*}
\Lambda(1,\dots,1) \geq  \Bigg( 1-\sum_{ \underline{e} \neq \underline{1}} \varepsilon \prod_{\substack{ i \in \{1,\dots,k\} \\ e_i=0}} \left(\frac{\omega(p_i)}{p_i} \right) \Bigg)  |\mathcal{A}| \geq  ( 1-2^k\varepsilon )  |\mathcal{A}| . 
\end{align*}
    Putting the three cases above together with (\ref{inequ:unevenstart}), we finally obtain
    \begin{align*}
     \sum_{\substack{a=1 \\ (a,p_1\dots p_k)=1}}^{p_1\dots p_k} \left|\sum_{\substack{n\leq N \\ n \in \mathcal{A}}} \exp\left(\frac{2 \pi i n a}{p_1\dots p_k} \right) \right|^2  &\geq \left( \left( 1-2^k\varepsilon \right)^2 -  4^k\varepsilon^2 -2^{k+1}\varepsilon \right)\prod_{i =1}^k \left(\frac{\omega(p_i)}{p_i-\omega(p_i)} \right) |\mathcal{A}|^2\\ 
     &=  \left( 1 -2^{k+2}\varepsilon \right)\prod_{i =1}^k \left(\frac{\omega(p_i)}{p_i-\omega(p_i)} \right) |\mathcal{A}| ^2
    \end{align*}
Using Linnik's large sieve inequality (Lemma~\ref{lem:largesieve.linnik}), summing over $p_1 \dots p_k$ with $k \leq K$, and observing $1 -2^{k+2}\varepsilon \geq 1 -2^{K+2}\varepsilon$, we then obtain the desired bound 
\begin{align*}
 |\mathcal{A}| \leq  \frac{2N}{ 1 -2^{K+2}\varepsilon} \left(        \sum_{\substack{d \leq N^{1/2}   }}^* \mu(d)^2 \prod_{ p \mid d} \left(\frac{\omega(p)}{p-\omega(p)} \right) \right)^{-1}. &\qedhere
    \end{align*}
\end{proof}

\subsection{Proof of Theorem~\ref{thm:largesieveimprovement}}\label{ssec:proof.prop1}

We  now discuss the proof of Theorem~\ref{thm:largesieveimprovement}. 
In Section~\ref{ssec:unusually.rare} we already mentioned that sets $\mathcal{A}$ treated by Theorem~\ref{thm:largesieveimprovement} typically have the property that residue classes  $c \in \Omega(p)$ are underrepresented in $\mathcal{A}$ compared to residue classes  $c \not \in \Omega(p)$. We also proved a nice sieve bound for a very strict type of underrepresentation of residue classes $\Omega(p)$. However, for the purpose of proving Theorem~\ref{thm:largesieveimprovement}, this criterion was a bit too strong (we cannot ensure that it is satisfied by all relevant $\mathcal{A}$). Instead of insisting that   residue classes in $\Omega(p_1) \times \dots \Omega(p_j)$ are rare modulo $p_1 \dots p_j$ for every $j \in \{1, \dots, k\}$, we only want to  place restrictions  on residue classes modulo $p_1 \dots p_k$ (products of exactly $k$ primes). \medskip

Theorem~\ref{thm:largesieveimprovement} is a simple combination of  Lemma~\ref{lem:proof.prop1.rare} and Lemma~\ref{lem:proof.prop1.common}, stated below.   

\begin{lemma}  \label{lem:proof.prop1.rare}
Let $\mathcal{A} \subseteq [1,N] \cap \mathbb{N}$. Let $\mathcal{P}$ be a  set of   primes. 

Let $\Omega(p) \subset \mathbb{Z}/p\mathbb{Z}$ with $\omega(p) = |\Omega(p)| \in \{1,\dots, p-1\}$.  

Let $c^*(k,\omega,\mathcal{P})$ and $d^*(k,\omega,\mathcal{P})$ be as given in Theorem~\ref{thm:largesieveimprovement} and assume $c^*(k,\omega,\mathcal{P}), d^*(k,\omega,\mathcal{P}) >0$. 
 Define 
 \begin{align*}
   \widetilde{ \varepsilon}(k,\omega,\mathcal{P}) &=    \min\left\{1, |\mathcal{P}|^{-k}\left( \sum_{p \in \mathcal{P}} \left( \frac{\omega(p )}{p -\omega(p )}\right) \right)^k  \right\}    \left( \frac{  c^*(k,\omega,\mathcal{P})^k d^*(k,\omega,\mathcal{P}     )^k } {  2^{4k+6} k   } \right).   
    \end{align*}

Suppose that for all distinct $p_1, \dots, p_k \in \mathcal{P}$,  we have $p_1 \dots p_k \leq N^{1/2}$ and 
\begin{align} \label{inequ:uncommon.residue.classes1}
 \#\left\{n \in \mathcal{A}:  n  \mbox{ \rm mod } p_i\in \Omega(p_i)  \mbox{ for } 1 \leq i \leq k\right\}\leq \widetilde{ \varepsilon}(k,\omega,\mathcal{P})\prod_{\substack{ i =1}}^k \left(\frac{\omega(p_i)}{p_i} \right) |\mathcal{A}|.
\end{align}    
Then the following  upper  bound is satisfied: 
\begin{align*}
|\mathcal{A}| \leq \frac{16N}{ c^*(k,\omega,\mathcal{P})^k}\left(\mathop{\sum \dots \sum}_{\substack{p_1, \dots, p_k \in \mathcal{P} \\ p_1> \dots > p_k }} \prod_{i =1}^k \left(\frac{\omega(p_i)}{p_i-\omega(p_i)} \right) \right)^{-1}.
\end{align*}
\end{lemma}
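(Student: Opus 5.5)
The plan is to reduce Lemma~\ref{lem:proof.prop1.rare} to the mechanism of Lemma~\ref{lem:uncommon.residue.classes}. Only the $k$-fold hypothesis (\ref{inequ:uncommon.residue.classes1}) is available, so we first manufacture a large subset $\mathcal{A}'\subseteq\mathcal{A}$ on which the residue classes $\Omega(p)$ are rare at \emph{every} level $j\le k$, in the sense needed in Case 1--3 of that proof. To this end, for $n\in\mathcal{A}$ define the weighted hit count
\begin{align*}
h(n)=\sum_{\substack{p\in\mathcal{P}\\ n \bmod p\in\Omega(p)}}\frac{\omega(p)}{p-\omega(p)},
\end{align*}
and compare it with $W=\sum_{p\in\mathcal{P}}\frac{\omega(p)}{p-\omega(p)}$. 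Write $\mathcal{A}_{\mathrm{big}}=\{n: h(n)\ge \tfrac13 W\}$ and $\mathcal{A}_{\mathrm{small}}$ for the rest. The constant $\tfrac13$ is chosen to match the definition of $c^*$.

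\textbf{Step 1: $\mathcal{A}_{\mathrm{big}}$ is a small proportion of $\mathcal{A}$.} For $n\in\mathcal{A}_{\mathrm{big}}$, the weighted number of $k$-tuples $p_1>\dots>p_k$ of distinct primes with $n \bmod p_i\in\Omega(p_i)$ for all $i$ is at least $\frac{1}{k!}\bigl(h(n)-k\max_p\frac{\omega(p)}{p-\omega(p)}\bigr)^k\ge \frac{(c^*W)^k}{k!}$. Here the weight of a tuple is $\prod_i\frac{\omega(p_i)}{p_i-\omega(p_i)}$, and we discard the $\le k$ repeated primes. Summing over $n$ and swapping the order of summation, the left side is bounded via (\ref{inequ:uncommon.residue.classes1}) by
\begin{align*}
\widetilde\varepsilon\,|\mathcal{A}|\sum_{p_1>\dots>p_k}\prod_i \frac{\omega(p_i)}{p_i-\omega(p_i)}\cdot\frac{\omega(p_i)}{p_i}.
\end{align*}
The extra factors $\omega(p_i)/p_i$ are compared to $W/|\mathcal{P}|$ on average; this is the role of the factor $\min\{1,|\mathcal{P}|^{-k}W^k\}$ in $\widetilde\varepsilon$. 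The outcome is $|\mathcal{A}_{\mathrm{big}}|\le \eta|\mathcal{A}|$ with $\eta$ tiny (a power of $d^*$ times $2^{-4k}$). In particular $|\mathcal{A}_{\mathrm{small}}|\ge |\mathcal{A}|/2$.

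\textbf{Step 2: lower bound for the large sieve sum.} Apply Corollary~\ref{cor:denominator.rearrangement} to $\mathcal{A}$ for each tuple $p_1>\dots>p_k$, and expand as in (\ref{inequ:unevenstart}). The diagonal term $\Lambda(\underline 1)^2\prod_i\frac{\omega(p_i)}{p_i-\omega(p_i)}$ is kept. The other diagonal terms are nonnegative and can be dropped. The cross terms $\Lambda(\underline e)\Lambda(\underline u)$ must be bounded on average over the tuples rather than tuple by tuple. Summing over tuples, each $\Lambda(\underline e)$ with zero-set $J$ counts $n$ lying in $\Omega(p_i)$ for $i\in J$. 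Elements of $\mathcal{A}_{\mathrm{small}}$ contribute at most $(h(n)/W)^{|J|}\le 3^{-|J|}$ of the full weighted tuple mass. The elements of $\mathcal{A}_{\mathrm{big}}$ are controlled by Step 1. The factors $\frac{p_i-\omega(p_i)}{\omega(p_i)}$ that appear at positions with $e_i=u_i=0$ are bounded by $\max\{1,\max_p\frac{p-\omega(p)}{\omega(p)}\}$; the quantity $d^*$ is built exactly to absorb this loss, together with the $k$ excluded coincident primes. Similarly, $\sum_{\mathrm{tuples}}\Lambda(\underline 1)\ge$ (a constant depending on $c^*$) times $|\mathcal{A}|$ times the full weighted sum, and Cauchy--Schwarz converts this into a lower bound for $\sum\Lambda(\underline 1)^2\prod_i\frac{\omega(p_i)}{p_i-\omega(p_i)}$. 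Altogether,
\begin{align*}
\sum_{p_1>\dots>p_k}\ \sum_{(a,p_1\cdots p_k)=1}\Bigl|\sum_{n\in\mathcal{A}}e\bigl(\tfrac{na}{p_1\cdots p_k}\bigr)\Bigr|^2\ \ge\ \frac{c^{*k}}{8}\,|\mathcal{A}|^2\sum_{p_1>\dots>p_k}\prod_{i}\frac{\omega(p_i)}{p_i-\omega(p_i)}.
\end{align*}
Finally, Lemma~\ref{lem:largesieve.linnik} gives the left side $\le 2N|\mathcal{A}|$, since every modulus satisfies $p_1\cdots p_k\le N^{1/2}$; this yields the claimed constant $16/c^{*k}$.

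\textbf{Main obstacle.} The hardest step is controlling the cross terms in Step 2. The factors $\frac{p_i-\omega(p_i)}{\omega(p_i)}$ at coordinates with $e_i=u_i=0$ can be large, and in the proof of Lemma~\ref{lem:uncommon.residue.classes} they were compensated by the $j$-level hypotheses, which are not available here. If the averaging over tuples does not close, I would try first to apply the whole argument to $\mathcal{A}_{\mathrm{small}}$ alone, via Corollary~\ref{cor:denominator.rearrangement} with $a_n=1_{\mathcal{A}_{\mathrm{small}}}(n)$, since $|\mathcal{A}_{\mathrm{small}}|\ge|\mathcal{A}|/2$. For that set the rareness at every level $j$ holds on average over tuples by the very definition of $h$, and one can then hope to run Cases 1--3 of Lemma~\ref{lem:uncommon.residue.classes} in averaged form.
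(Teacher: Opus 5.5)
Your set-up (Corollary~\ref{cor:denominator.rearrangement} for each tuple, then Lemma~\ref{lem:largesieve.linnik}) is fine. So is your Step~1, which is exactly the paper's (\ref{inequ:useful3}) with threshold $\tfrac13$, and so is the Cauchy--Schwarz lower bound for the $\Lambda(\underline 1)^2$ terms. The gap is in Step~2, which you yourself flag: this is where the whole difficulty of the lemma lies, and the mechanism you describe does not work.

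In (\ref{inequ:cor2}) the cross term $\Lambda(\underline e)\Lambda(\underline u)$ carries the weight $\frac{\omega(p_i)}{p_i-\omega(p_i)}$ only where $e_i=u_i=1$. It carries weight $1$ for $i\in M(\underline e,\underline u)$, and $\frac{p_i-\omega(p_i)}{\omega(p_i)}$ where $e_i=u_i=0$. This causes three problems.

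First, consider the $2k$ negative terms in which one of $\underline e,\underline u$ is $\underline 1$ and the other has a single zero. These are governed by the \emph{unweighted} count $z(n)=\#\{p\in\mathcal P: n \bmod p\in\Omega(p)\}$, because the zero coordinate has weight $1$. After summing over tuples they are bounded only by $\frac{2}{(k-1)!}W^{k-1}|\mathcal A|\sum_{n\in\mathcal A}z(n)$. Your condition $h(n)<W/3$ only gives $z(n)\le\tfrac13 M_1W$, where $M_1=\max_p\frac{p-\omega(p)}{\omega(p)}$.

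Second, even if the weight were in place, your relative bound $3^{-1}$ per term adds up to about $2k/3$. For $k\ge 2$ this exceeds the main term, whose relative size is at most $1$ since $\Lambda(\underline 1)\le|\mathcal A|$. The paper uses $\tfrac13$ only for the positive part; for the negative terms it needs the much smaller threshold $\delta=(3k2^{k+3})^{-1}c^{*k}$.

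Third, the terms with common zeros are worse. There the factor $\frac{p_i-\omega(p_i)}{\omega(p_i)}$ couples $n_1$ and $n_2$. Bounding it by $M_1$ coordinatewise loses $M_1^{j}$, and nothing in the statement absorbs this: $d^*>0$ only says that $k\max\{1,M_1\}$ is small compared with $W$, not that $M_1$ is bounded.

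Your fallback does not repair this. Cases 1--3 of Lemma~\ref{lem:uncommon.residue.classes} use, tuple by tuple, $\Lambda(\underline e)\le\varepsilon\prod_{e_i=0}\frac{\omega(p_i)}{p_i}|\mathcal A|$ with small $\varepsilon$ at every level $j<k$. The condition $h(n)<W/3$ says nothing about rareness relative to $\prod\omega(p_i)/p_i$: if $\omega(p)/p$ is small, most elements of a random-like set satisfy it. And since the cross terms are bilinear, averaging each factor $\Lambda$ over tuples separately does not control their products.

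The missing idea is to do the bookkeeping on \emph{pairs} $(n_1,n_2)$. The paper rewrites the summed expression (\ref{inequ:cor2}) as in (\ref{inequ:prooflem1}). Each pair contributes a sum over $p_1>\dots>p_k$ of products whose factor is $\frac{\omega(p)}{p-\omega(p)}$ on $X(n_1,n_2)$, $-1$ on $Y_1\cup Y_2$, and $\frac{p-\omega(p)}{\omega(p)}$ on $Z(n_1,n_2)$. The argument then proceeds as follows.

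\begin{enumerate}[(i)]
\item It keeps the tuples inside $X(n_1,n_2)$. This is where the threshold $\tfrac13$, and hence $c^*$, enters, via (\ref{inequ:useful3}) applied to $n_1$ and $n_2$.
\item It subtracts, in absolute value, all tuples meeting $\mathcal P\setminus X(n_1,n_2)$. For good pairs this loss is at most $3\delta(1+3\delta)^{k-1}W^k/(k-1)!$. Here good means $z(n_1),z(n_2)\le\delta W$ \emph{and} $\sum_{p\in Z(n_1,n_2)}\frac{p-\omega(p)}{\omega(p)}\le\delta W$.
\item Bad pairs are rare by the $k$-th moment bounds (\ref{inequ:useful1}) and (\ref{inequ:useful2}). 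This is where $d^*>0$ is used. It is also where the factor $\min\{1,(W/|\mathcal P|)^k\}$ in $\widetilde\varepsilon$ pays for the crude bound $z(n_1)+z(n_2)\le 2|\mathcal P|$. You do not need that factor in Step~1, where $\omega(p)/p\le 1$ suffices.
\item Pairs with $\sum_{p\in Z(n_1,n_2)}\frac{p-\omega(p)}{\omega(p)}\ge W$ are handled via (\ref{inequ:useful4}). Squaring the hypothesis yields $\prod_i(\omega(p_i)/p_i)^2$, which cancels the factors $\frac{p_i-\omega(p_i)}{\omega(p_i)}$. The condition $d^*>0$ guarantees that the $k$-th elementary symmetric sum over $Z(n_1,n_2)$ is comparable to the $k$-th power of the full sum.
\end{enumerate}

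Without this pair-level analysis, the inequality with $\frac{c^{*k}}{8}$ that you state at the end of Step~2 is unsupported.
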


The proof of Lemma~\ref{lem:proof.prop1.rare} is postponed to Section~\ref{sssec:lemmaproof}.

\begin{lemma} \label{lem:proof.prop1.common}
Let $\mathcal{A} \subseteq [1,N] \cap \mathbb{N}$.  Let $\mathcal{P} $ be a set of primes,  let  $\Omega(p) \subseteq \mathbb{Z}/p\mathbb{Z}$ and $\omega(p)=|\Omega(p)|$.   

Suppose that for every $c \in    \Omega(p)$,   $|\{\overline{a} \in \mathbb{Z}/p^2\mathbb{Z}: a \in \mathcal{A}, a \equiv c \mbox{ mod } p\}| \leq \Theta$.  

Let $\varepsilon \in (0,1)$ and suppose that there exist distinct $p_1, \dots, p_k \in \mathcal{P}$ with $p_1 \dots p_k \leq N^{1/2}$ and 
\begin{align}  \label{inequ:rare.residue.fail}
 \#\left\{n \in \mathcal{A}: ( n  \mbox{ \rm mod } p_i)\in \Omega(p_i)  \mbox{ for } 1 \leq i \leq k\right\}> \varepsilon\prod_{\substack{ i = 1}}^k \left(\frac{\omega(p_i)}{p_i} \right) |\mathcal{A}|.
\end{align}
Then $|\mathcal{A}| \leq \frac{1}{\varepsilon}\left(\frac{2N \Theta^k}{p_1 \dots p_k}\right).$ In particular, if $\mathcal{P} \subseteq [\frac{1}{2}M,M]$ and $M^k \leq N^{1/2}$, we have
\begin{align*}
|\mathcal{A}| \leq \frac{2^{k+1}}{\varepsilon}\left(\frac{N \Theta^k}{M^k}\right).
\end{align*}
     \end{lemma}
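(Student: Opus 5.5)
The plan is to exploit the fact that the set $\mathcal{A}' = \{n \in \mathcal{A}: n \bmod p_i \in \Omega(p_i) \mbox{ for } 1 \leq i \leq k\}$ lives in very few residue classes modulo $q = (p_1 \dots p_k)^2$. Then we count trivially: each residue class modulo $q$ contains at most $N/q + 1$ integers in $[1,N]$. Combining this with the lower bound (\ref{inequ:rare.residue.fail}) on $|\mathcal{A}'|$ gives the claimed upper bound on $|\mathcal{A}|$. No large sieve is needed for this lemma; it is the ``easy'' alternative to Lemma~\ref{lem:proof.prop1.rare}.

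\textbf{Counting residue classes.} An element $n \in \mathcal{A}'$ determines a tuple $(c_1,\dots,c_k)$ with $c_i = n \bmod p_i \in \Omega(p_i)$. For each $i$, the class of $n$ modulo $p_i^2$ lies in $\{\overline{a} \in \mathbb{Z}/p_i^2\mathbb{Z}: a \in \mathcal{A}, a \equiv c_i \bmod p_i\}$, which has size at most $\Theta$ by hypothesis. By the Chinese remainder theorem, the number of classes modulo $q$ occupied by $\mathcal{A}'$ is therefore at most $\prod_{i} \omega(p_i)\Theta = \Theta^k \prod_i \omega(p_i)$. Since $q = (p_1\dots p_k)^2 \leq N$, each such class meets $[1,N]$ in at most $N/q + 1 \leq 2N/q$ integers. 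Hence
\begin{align*}
|\mathcal{A}'| \leq \Theta^k \prod_{i=1}^k \omega(p_i) \cdot \frac{2N}{(p_1\dots p_k)^2}.
\end{align*}

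\textbf{Conclusion.} Combining this with (\ref{inequ:rare.residue.fail}) gives
\begin{align*}
\varepsilon \prod_{i=1}^k \frac{\omega(p_i)}{p_i}\, |\mathcal{A}| < \Theta^k \prod_{i=1}^k \omega(p_i) \cdot \frac{2N}{(p_1\dots p_k)^2}.
\end{align*}
Cancelling $\prod_i \omega(p_i)/p_i$ (nonzero, since (\ref{inequ:rare.residue.fail}) forces $\mathcal{A}'\neq\emptyset$) yields $|\mathcal{A}| \leq \frac{1}{\varepsilon}\cdot\frac{2N\Theta^k}{p_1\dots p_k}$. For the second statement, note that $p_i \geq M/2$ gives $p_1\dots p_k \geq M^k/2^k$, so $|\mathcal{A}| \leq \frac{2^{k+1}}{\varepsilon}\cdot\frac{N\Theta^k}{M^k}$.

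The only point needing care is the inequality $N/q + 1 \leq 2N/q$, which requires $q \leq N$, i.e.\ $p_1\dots p_k \leq N^{1/2}$. This is exactly the hypothesis of the lemma, so I expect no real obstacle. One should also check that the CRT count is correct: the tuple of classes modulo $p_i^2$ determines the class modulo $q$, and each coordinate has at most $\omega(p_i)\Theta$ options.
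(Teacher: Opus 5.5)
Your proposal is correct and follows essentially the same argument as the paper: you bound the number of classes modulo $(p_1\cdots p_k)^2$ occupied by the restricted set by $\Theta^k\prod_i\omega(p_i)$, count each class trivially by $2N/(p_1\cdots p_k)^2$ using $p_1\cdots p_k\le N^{1/2}$, and compare with the lower bound (\ref{inequ:rare.residue.fail}). Your additional remark that the hypothesis forces $\omega(p_i)>0$, which justifies the cancellation, is a small point the paper leaves implicit.
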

     
     \begin{proof}
 Write $\mathcal{B} = \left\{n \in \mathcal{A}:  (n  \mbox{ \rm mod } p_i )\in \Omega(p_i)  \mbox{ for } 1 \leq i \leq k\right\}$.    Then recall that for every $c \in \Omega(p)$ and $p \in \mathcal{P}$, we have  $|\{\overline{a} \in \mathbb{Z}/p^2\mathbb{Z}: a \in \mathcal{A}, a \equiv c \mbox{ mod } p\}| \leq \Theta$. Hence $|\{\overline{b} \in \mathbb{Z}/p_i^2\mathbb{Z}: b \in \mathcal{B}\}| \leq \Theta \omega(p_i)$ for $1 \leq i \leq k$.  Thus $\mathcal{B} \subseteq [1,N]$ occupies at most $ \Theta^k \omega(p_1) \dots \omega(p_k)$ residue classes modulo $(p_1 \dots p_k)^2$, and  
    \begin{align*} 
|\mathcal{B}| \leq \Theta^k \omega(p_1) \dots \omega(p_k) \left( \frac{N  }{(p_1 \dots p_k)^2} +1\right) \leq \frac{2N \Theta^k \omega(p_1) \dots \omega(p_k)}{(p_1 \dots p_k)^2}.
    \end{align*}  
   On the other hand, we also assumed  
 $ |\mathcal{B}|> \varepsilon\prod_{\substack{ i =1}}^k \left(\frac{\omega(p_i)}{p_i} \right) |\mathcal{A}|.$
Combining these two inequalities, we get the desired bound  $$
|\mathcal{A}| \leq \frac{1}{\varepsilon}\left(\frac{2N  \Theta^k}{p_1 \dots p_k}\right).$$  In particular, if $\mathcal{P} \subseteq [\frac{1}{2}M,M]$, then 
$
|\mathcal{A}| \leq \frac{2^{k+1}}{\varepsilon}\left(\frac{N \Theta^k}{M^k}\right).
$
     \end{proof} 
\underline{Note}: In the proof of Lemma~\ref{lem:proof.prop1.common}, it was important that (\ref{inequ:rare.residue.fail}) holds for a long product $p_1 \dots p_k$ rather than some shorter product $p_1 \dots p_j$ with $j<k$, as the latter would give the much weaker bound $
|\mathcal{A}| \leq \frac{2^{k+1}}{\varepsilon}\left(\frac{N \Theta^k}{M^j}\right).
$ The need for a long product $p_1 \dots p_k$ is the reason we cannot combine  Lemma~\ref{lem:proof.prop1.common} with the simple Lemma~\ref{lem:uncommon.residue.classes}, and need to replace the  strict underrepresentation criterion  (\ref{inequ:uncommon.residue.classes0}) of Lemma~\ref{lem:uncommon.residue.classes} with the weaker condition (\ref{inequ:uncommon.residue.classes1}) given in  Lemma~\ref{lem:proof.prop1.rare}. \medskip

We now deduce  Theorem~\ref{thm:largesieveimprovement} from Lemma~\ref{lem:proof.prop1.rare}
 and Lemma~\ref{lem:proof.prop1.common}:

 \begin{proof}[Proof of Theorem~\ref{thm:largesieveimprovement} $($assuming Lemma~\ref{lem:proof.prop1.rare}$)$] Suppose  $\mathcal{A} \subseteq [1,N] \cap \mathbb{N}$, $\mathcal{P} \subseteq \{p \in [\frac{1}{2}M,M]: p \mbox{ prime}\}$ with $M^k \leq N^{1/2}$ and  $\Omega(p) \subset \mathbb{Z}/p\mathbb{Z}$  with    $|\{\overline{a} \in \mathbb{Z}/p^2\mathbb{Z}: a \in \mathcal{A}, a \equiv c \mbox{ mod } p\}| \leq \Theta$ for  $c \in    \Omega(p)$ and $p \in \mathcal{P}$. Then $\mathcal{A}$ must satisfy the conditions of either Lemma~\ref{lem:proof.prop1.rare}  or Lemma~\ref{lem:proof.prop1.common} with $\varepsilon= \widetilde{ \varepsilon}(k,\omega,\mathcal{P})$. If the conditions of Lemma~\ref{lem:proof.prop1.rare} are satisfied, then 
 \begin{align*} 
|\mathcal{A}| \leq \frac{16N}{ c^*(k,\omega,\mathcal{P})^k}\left(\mathop{\sum \dots \sum}_{\substack{p_1, \dots, p_k \in \mathcal{P} \\ p_1> \dots > p_k }} \prod_{i =1}^k \left(\frac{\omega(p_i)}{p_i-\omega(p_i)} \right) \right)^{-1}.
\end{align*}
On the other hand, if the conditions of Lemma~\ref{lem:proof.prop1.common} are satisfied with $\varepsilon= \widetilde{ \varepsilon}(k,\omega,\mathcal{P})$ , then 
\begin{align*} \nonumber
|\mathcal{A}| &\leq \frac{2^{k+1}}{ \widetilde{ \varepsilon}(k,\omega,\mathcal{P})}\left(\frac{N \Theta^k}{M^k}\right)  \\
&=\frac{  2^{5k+7}\Theta^k k  N  }{c^*(k,\omega,\mathcal{P})^k d^*(k,\omega,\mathcal{P})^k}  \max\left\{ \left(\frac{\left( \sum_{p \in \mathcal{P}} \left( \frac{\omega(p )}{p -\omega(p )}\right) \right) }{M } \right)^k , \left( \frac{|\mathcal{P}| }{M} \right)^k \right\} \left( \sum_{p \in \mathcal{P}} \left( \frac{\omega(p )}{p -\omega(p )}\right) \right)^{-k}  \nonumber \\
&\leq 128N \left( \frac{  32 \Theta     }{c^*  d^* } \right)^k \max\left\{ \left(\frac{\left( \sum_{p \in \mathcal{P}} \left( \frac{\omega(p )}{p -\omega(p )}\right) \right) }{M } \right)^k , \left( \frac{|\mathcal{P}| }{M} \right)^k \right\} \left(\mathop{\sum \dots \sum}_{\substack{p_1, \dots, p_k \in \mathcal{P} \\ p_1> \dots > p_k }} \prod_{i =1}^k \left(\frac{\omega(p_i)}{p_i-\omega(p_i)} \right) \right)^{-1}.
\end{align*}  
Recall now that we defined the quantity 
 \begin{align*}
 \xi(k,\omega,\mathcal{P},\Theta)=64\left(\frac{ 32 \Theta }{c^*(k,\omega,\mathcal{P})  d^*(k,\omega,\mathcal{P}) } \right)^k\max\left\{ \left(\frac{   d^*(k,\omega,\mathcal{P}) }{ 32 \Theta }  \right)^k,   \left(\frac{|\mathcal{P}|}{ M }\right)^k   ,  \left(  \frac{ \sum_{p \in \mathcal{P}} \left( \frac{\omega(p )}{p -\omega(p )}\right) }{M } \right)^k \right\}.
\end{align*}
Thus we see that Lemma~\ref{lem:proof.prop1.rare}   and Lemma~\ref{lem:proof.prop1.common} imply
\begin{align*}
&|\mathcal{A}| \leq  \xi(k,\omega,\mathcal{P},\Theta) \, 2N \left(  \mathop{\sum \dots \sum}_{\substack{ p_1, \dots, p_k \in \mathcal{P} \\ p_1> \dots > p_k }}  \prod_{i=1}^k \left( \frac{\omega(p_i)}{p_i-\omega(p_i)}\right) \right)^{-1}.
\end{align*}
This is what we wanted to prove.
 \end{proof}

 \subsection{Proof of Lemma~\ref{lem:proof.prop1.rare}}\label{sssec:lemmaproof}

In this section,  we prove Lemma~\ref{lem:proof.prop1.rare}.  This lemma involves condition (\ref{inequ:uncommon.residue.classes1}), 
\begin{align*} 
 \#\left\{n \in \mathcal{A}:  (n  \mbox{ \rm mod } p_i)\in \Omega(p_i)  \mbox{ for } 1 \leq i \leq k \right\}\leq \widetilde{ \varepsilon}(k,\omega,\mathcal{P})\prod_{\substack{ i =1}}^k \left(\frac{\omega(p_i)}{p_i} \right) |\mathcal{A}|.
\end{align*} 
We first describe how (\ref{inequ:uncommon.residue.classes1}) can be used to  bound   the number of $n \in \mathcal{A}$ for which there are many $p \in \mathcal{P}$ with $(n  \mbox{ \rm mod } p)\in \Omega(p)$. This is a key ingredient of the proof of Lemma~\ref{lem:proof.prop1.rare}.

\begin{lemma}\label{lem:lem10proof.inter}
Let $\mathcal{A} \subseteq [1,N] \cap \mathbb{N}$. Let $\mathcal{P}$ be a set of primes, let   $\Omega(p) \subseteq  \mathbb{Z}/p\mathbb{Z}$ and $\omega(p)=|\Omega(p)|$ with $0 < \omega(p) <p$.  
 Let $\varepsilon, \delta \in (0,1)$. Suppose that for all distinct $p_1, \dots, p_k \in \mathcal{P}$,  we have $p_1 \dots p_k \leq N^{1/2}$ and 
\begin{align} \label{inequ:uncommon.residue.classes2}
  \#\left\{n \in \mathcal{A}:  (n  \mbox{ \rm mod } p_i)\in \Omega(p_i)  \mbox{ for } 1 \leq i \leq k\right\}\leq \varepsilon \prod_{\substack{ i =1}}^k \left(\frac{\omega(p_i)}{p_i} \right) |\mathcal{A}|.
\end{align}    

 For any $n_1, n_2 \in \mathcal{A}$, we define $Z(n_1,n_2)=\{p \in \mathcal{P}: (n_1 \mbox{ mod } p)  \in \Omega(p), (n_2 \mbox{ mod } p) \in \Omega(p)\}$ and set $Z(n)=Z(n,n)$ and $z(n)=|Z(n)|$. We further define the following useful quantities:
 \begin{align*}
 &S(\omega, \mathcal{P})=  \sum_{p \in \mathcal{P}} \left( \frac{\omega(p)}{p-\omega(p)} \right) \,\, \mbox{ and } 
 \,\,
 M_1(\omega, \mathcal{P}) = \max_{p \in \mathcal{P}} \left( \frac{p-\omega(p)}{\omega(p)}\right)
\,\, \mbox{ and } \,\, 
 M_2(\omega, \mathcal{P}) = \max_{p \in \mathcal{P}} \left( \frac{\omega(p)}{p-\omega(p)}\right).
 \end{align*}
Assume $\delta S(\omega, \mathcal{P}) > \max\{k, kM_1(\omega,\mathcal{P}),kM_2(\omega,\mathcal{P})\}$.  We then have the following four inequalities: 
 \begin{align} \label{inequ:useful1}
 &\#\left\{n \in \mathcal{A}: z(n) >  \delta \sum_{p \in \mathcal{P}} \left( \frac{\omega(p)}{p-\omega(p)} \right)\right\} \leq \frac{k! \varepsilon }{(\delta  S(\omega, \mathcal{P}) -k )^k }  \mathop{\sum \dots \sum}_{\substack{ p_1, \dots, p_k\in \mathcal{P} \\ p_1> \dots > p_k }}     \prod_{\substack{ i =1}}^k \left(\frac{\omega(p_i)}{p_i} \right) |\mathcal{A}|, \\  \label{inequ:useful2}
 &\#\left\{(n_1,n_2) \in \mathcal{A} \times \mathcal{A}: \sum_{p \in Z(n_1,n_2)}   \left( \frac{p-\omega(p)} {\omega(p)}\right)  >  \delta   \sum_{p \in \mathcal{P}} \left( \frac{\omega(p)}{p-\omega(p)} \right)\right\}  \\ &\leq  \frac{k! \varepsilon}{ (\delta  S(\omega, \mathcal{P}) - k M_1(\omega, \mathcal{P}))^k}  \mathop{\sum \dots \sum}_{\substack{ p_1, \dots, p_k\in \mathcal{P} \\ p_1> \dots > p_k }}     \prod_{\substack{ i =1}}^k \left(\frac{\omega(p_i)}{p_i} \right)  |\mathcal{A}|^2, \nonumber  \\  \label{inequ:useful3}
 &\#\left\{n \in \mathcal{A}: \sum_{p \in Z(n)}   \left( \frac {\omega(p)}{p-\omega(p)}\right)  >  \delta  S(\omega, \mathcal{P})\right\}  \leq  \frac{\varepsilon    S(\omega,\mathcal{P})^k   }{ (\delta  S(\omega, \mathcal{P}) - k M_2(\omega, \mathcal{P}))^k}    |\mathcal{A}|,
\end{align}
\begin{align} 
  \label{inequ:useful4}
  &\sum_{\substack{(n_1,n_2) \in \mathcal{A} \times \mathcal{A} }}    \mathop{\sum \dots \sum}_{\substack{ p_1, \dots, p_k\in Z(n_1,n_2) \\ p_1> \dots > p_k }} \prod_{i=1}^k  \left( \frac{p_i-\omega(p_i)} {\omega(p_i)}\right)   \leq  \varepsilon   \mathop{\sum \dots \sum}_{\substack{ p_1, \dots, p_k\in \mathcal{P} \\ p_1> \dots > p_k }}     \prod_{\substack{ i =1}}^k \left(\frac{\omega(p_i)}{p_i} \right)  |\mathcal{A}|^2.
\end{align} 
\end{lemma}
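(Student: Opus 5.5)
The plan is a first-moment (Markov-type) argument applied to $k$-fold elementary symmetric sums. Everything reduces to double counting: for each subset $\{p_1>\dots>p_k\}\subseteq\mathcal{P}$ of distinct primes, (\ref{inequ:uncommon.residue.classes2}) bounds the number of $n\in\mathcal{A}$ with $n \bmod p_i\in\Omega(p_i)$ for all $i$. Equivalently, it bounds the number of $n$ with $\{p_1,\dots,p_k\}\subseteq Z(n)$.

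For (\ref{inequ:useful4}), swap the order of summation. The left side equals
\[
\sum_{p_1>\dots>p_k}\prod_i\tfrac{p_i-\omega(p_i)}{\omega(p_i)}\cdot \#\{n: \{p_i\}\subseteq Z(n)\}^2,
\]
since $\{p_i\}\subseteq Z(n_1,n_2)$ exactly when both $n_1$ and $n_2$ lie in the set counted by (\ref{inequ:uncommon.residue.classes2}). Bound one factor of the square by $\varepsilon\prod\frac{\omega(p_i)}{p_i}|\mathcal{A}|$ and the other trivially by $|\mathcal{A}|$; this gives $\varepsilon\prod\frac{p_i-\omega(p_i)}{p_i}\prod\frac{\omega(p_i)}{p_i}|\mathcal{A}|^2$. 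That is not quite the claimed bound, so instead use (\ref{inequ:uncommon.residue.classes2}) for both factors, giving
\[
\varepsilon^2\prod\tfrac{p_i-\omega(p_i)}{\omega(p_i)}\cdot\tfrac{\omega(p_i)^2}{p_i^2}=\varepsilon^2\prod\tfrac{\omega(p_i)}{p_i}\cdot\tfrac{p_i-\omega(p_i)}{p_i}\le\varepsilon\prod\tfrac{\omega(p_i)}{p_i},
\]
which is exactly what is needed.

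For (\ref{inequ:useful1})--(\ref{inequ:useful3}), use the following elementary lemma. If nonnegative weights $w_p\le W$ on a set $Z$ have total $T=\sum_{p\in Z}w_p$, then
\[
e_k(w):=\sum_{p_1>\dots>p_k\in Z}\prod w_{p_i}\ \ge\ \frac{(T-kW)^k}{k!}\quad\text{when } T>kW.
\]
This follows by choosing the $p_i$ greedily and sequentially: after removing $j<k$ elements, the remaining weight is at least $T-jW\ge T-kW$, so the ordered sum is at least $(T-kW)^k$, and dividing by $k!$ accounts for orderings. The three weights are:
\begin{itemize}
\item for (\ref{inequ:useful1}), $w_p=1$, $W=1$, and $T=z(n)>\delta S$;
\item for (\ref{inequ:useful2}), $w_p=\frac{p-\omega(p)}{\omega(p)}$, $W=M_1$;
\item for (\ref{inequ:useful3}), $w_p=\frac{\omega(p)}{p-\omega(p)}$, $W=M_2$.
\end{itemize}
The hypothesis $\delta S>\max\{k,kM_1,kM_2\}$ guarantees $T>kW$ in each case. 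Markov then gives
\[
\#\{\text{bad } n\}\cdot\frac{(\delta S-kW)^k}{k!}\le\sum_n e_k(w|_{Z(n)}).
\]

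It remains to evaluate $\sum_n e_k(w|_{Z(n)})$ in each case by swapping sums.
\begin{itemize}
\item In case (\ref{inequ:useful1}) it equals $\sum_{p_1>\dots>p_k}\#\{n:\{p_i\}\subseteq Z(n)\}\le\varepsilon\sum\prod\frac{\omega(p_i)}{p_i}|\mathcal{A}|$, which gives (\ref{inequ:useful1}).
\item Case (\ref{inequ:useful2}) is the pair version: the sum is over $(n_1,n_2)$, and the swapped sum is exactly the left side of (\ref{inequ:useful4}). Combining with (\ref{inequ:useful4}) yields (\ref{inequ:useful2}).
\item In case (\ref{inequ:useful3}), the swapped sum is $\le\varepsilon\sum_{p_1>\dots>p_k}\prod\frac{\omega(p_i)}{p_i-\omega(p_i)}\cdot\frac{\omega(p_i)}{p_i}|\mathcal{A}|$. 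Bound $\frac{\omega(p_i)}{p_i}\le1$, and bound the $k$-fold ordered sum of $\prod\frac{\omega}{p-\omega}$ by $S^k/k!$. The factor $k!$ then cancels against the $k!$ from the lemma, leaving $\varepsilon S^k/(\delta S-kM_2)^k$.
\end{itemize}

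The main point needing care is the order of proof. Derive (\ref{inequ:useful4}) first, because (\ref{inequ:useful2}) depends on it. The only other delicate step is the greedy lower bound for $e_k$. Everything else is exchanging finite sums.
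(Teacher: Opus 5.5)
Your proposal is correct and follows the paper's proof essentially step for step: you swap the order of summation so that (\ref{inequ:uncommon.residue.classes2}) applies to each $k$-tuple (squared, together with $\varepsilon^2\le\varepsilon$, for (\ref{inequ:useful4})), and then apply a Markov-type bound via $e_k\ge (T-kW)^k/k!$, which the paper uses implicitly (through $z(z-1)\cdots(z-k+1)\ge(z-k)^k$ in (\ref{inequ:useful1})) and you make explicit. The only blemish is in your discarded detour for (\ref{inequ:useful4}): bounding one factor trivially actually gives $\varepsilon\prod_i\frac{p_i-\omega(p_i)}{p_i}|\mathcal{A}|^2$, not the expression you wrote, but this plays no role in your final argument.
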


\begin{proof} We begin by recalling the inequality (\ref{inequ:uncommon.residue.classes2}), given below:
\begin{align*}  
  \#\left\{n \in \mathcal{A}:  (n  \mbox{ \rm mod } p_i)\in \Omega(p_i)  \mbox{ for } 1 \leq i \leq k\right\}\leq \varepsilon \prod_{\substack{ i =1}}^k \left(\frac{\omega(p_i)}{p_i} \right) |\mathcal{A}|.
\end{align*}

 \underline{Proof of (\ref{inequ:useful1})}: 
Changing the order of summation and using 
 condition (\ref{inequ:uncommon.residue.classes2}), we first note
\begin{align*}
 \sum_{n \in \mathcal{A}}   \binom{z(n)}{k}   &=\mathop{\sum \dots \sum}_{\substack{ p_1, \dots, p_k\in \mathcal{P} \\ p_1> \dots > p_k }}      \#\left\{n \in \mathcal{A}:  (n  \mbox{ \rm mod } p_i)\in \Omega(p_i)  \mbox{ for } 1 \leq i \leq k\right\}  \\
 &\leq       \varepsilon  \mathop{\sum \dots \sum}_{\substack{ p_1, \dots, p_k\in \mathcal{P} \\ p_1> \dots > p_k }}     \prod_{\substack{ i =1}}^k \left(\frac{\omega(p_i)}{p_i} \right) |\mathcal{A}|.
\end{align*} 
We then observe the following:
\begin{align*}
 \#\left\{n \in \mathcal{A}: z(n) >  \delta \sum_{p \in \mathcal{P}} \left( \frac{\omega(p)}{p-\omega(p)} \right)\right\}  &= \sum_{\substack{ n \in \mathcal{A} \\ z(n) > \delta S(\omega, \mathcal{P})}}   \frac{k! }{ z(n) (z(n)-1) \dots (z(n)-k+1)}   \binom{z(n)}{k} 
 \\
  &\leq  \frac{k! }{(\delta  S(\omega, \mathcal{P}) -k )^k }   \sum_{\substack{ n \in \mathcal{A} }}   \binom{z(n)}{k}  \\
  &\leq  \frac{k! \varepsilon }{(\delta  S(\omega, \mathcal{P}) -k )^k }  \mathop{\sum \dots \sum}_{\substack{ p_1, \dots, p_k\in \mathcal{P} \\ p_1> \dots > p_k }}     \prod_{\substack{ i =1}}^k \left(\frac{\omega(p_i)}{p_i} \right) |\mathcal{A}|.
\end{align*}
 \underline{Proof of (\ref{inequ:useful2}) and (\ref{inequ:useful4})}: To prove the second and fourth inequality, we  observe
\begin{align*}
 &\sum_{\substack{(n_1,n_2) \in \mathcal{A} \times \mathcal{A} }}    \mathop{\sum \dots \sum}_{\substack{ p_1, \dots, p_k\in Z(n_1,n_2) \\ p_1> \dots > p_k }} \prod_{i=1}^k  \left( \frac{p_i-\omega(p_i)} {\omega(p_i)}\right) \\  &=\mathop{\sum \dots \sum}_{\substack{ p_1, \dots, p_k\in \mathcal{P} \\ p_1> \dots > p_k }}   \prod_{i=1}^k  \left( \frac{p_i-\omega(p_i)} {\omega(p_i)}\right)  \#\left\{n \in \mathcal{A}:  (n  \mbox{ \rm mod } p_i)\in \Omega(p_i)  \mbox{ for } 1 \leq i \leq k\right\}^2    \\
 &\leq  \varepsilon^2  \mathop{\sum \dots \sum}_{\substack{ p_1, \dots, p_k\in \mathcal{P} \\ p_1> \dots > p_k }}   \prod_{i=1}^k  \left( \frac{p_i-\omega(p_i)} {\omega(p_i)}\right)         \prod_{\substack{ i =1}}^k \left(\frac{\omega(p_i)}{p_i} \right)^2 |\mathcal{A}|^2  \leq  \varepsilon   \mathop{\sum \dots \sum}_{\substack{ p_1, \dots, p_k\in \mathcal{P} \\ p_1> \dots > p_k }}     \prod_{\substack{ i =1}}^k \left(\frac{\omega(p_i)}{p_i} \right)  |\mathcal{A}|^2.
\end{align*} 
Recalling $M_1(\omega, \mathcal{P}) = \max_{p \in \mathcal{P}} ( \frac{p-\omega(p)}{\omega(p)}) $, we then observe the following:
\begin{align*}
 &\#\left\{(n_1,n_2) \in \mathcal{A} \times \mathcal{A}: \sum_{p \in Z(n_1,n_2)}   \left( \frac{p-\omega(p)} {\omega(p)}\right)  >  \delta  S(\omega, \mathcal{P})\right\} \\ 
 & \leq  \sum_{\substack{(n_1,n_2) \in \mathcal{A} \times \mathcal{A} }}   \frac{k!}{ (\delta  S(\omega, \mathcal{P}) - k M_1(\omega, \mathcal{P}))^k} \mathop{\sum \dots \sum}_{\substack{ p_1, \dots, p_k\in Z(n_1,n_2) \\ p_1> \dots > p_k }} \prod_{i=1}^k  \left( \frac{p_i-\omega(p_i)} {\omega(p_i)}\right)\\ 
 & \leq  \frac{k!}{ (\delta  S(\omega, \mathcal{P}) - k M_1(\omega, \mathcal{P}))^k} \sum_{\substack{(n_1,n_2) \in \mathcal{A} \times \mathcal{A} }}   \mathop{\sum \dots \sum}_{\substack{ p_1, \dots, p_k\in Z(n_1,n_2) \\ p_1> \dots > p_k }} \prod_{i=1}^k  \left( \frac{p_i-\omega(p_i)} {\omega(p_i)}\right) \\
  & \leq  \frac{k! \varepsilon}{ (\delta  S(\omega, \mathcal{P}) - k M_1(\omega, \mathcal{P}))^k}  \mathop{\sum \dots \sum}_{\substack{ p_1, \dots, p_k\in \mathcal{P} \\ p_1> \dots > p_k }}     \prod_{\substack{ i =1}}^k \left(\frac{\omega(p_i)}{p_i} \right)  |\mathcal{A}|^2.
\end{align*} 
 \underline{Proof of (\ref{inequ:useful3})}: Finally, we observe that 
\begin{align*}
 &\sum_{\substack{n \in \mathcal{A}  }}    \mathop{\sum \dots \sum}_{\substack{ p_1, \dots, p_k\in Z(n) \\ p_1> \dots > p_k }} \prod_{i=1}^k  \left( \frac{\omega(p_i)}{p_i-\omega(p_i)} \right) \\ &=\mathop{\sum \dots \sum}_{\substack{ p_1, \dots, p_k\in \mathcal{P} \\ p_1> \dots > p_k }}   \prod_{i=1}^k  \left( \frac{\omega(p_i)}{p_i-\omega(p_i)} \right)  \#\left\{n \in \mathcal{A}:  (n  \mbox{ \rm mod } p_i)\in \Omega(p_i)  \mbox{ for } 1 \leq i \leq k\right\}     \\
 &\leq  \varepsilon   \mathop{\sum \dots \sum}_{\substack{ p_1, \dots, p_k\in \mathcal{P} \\ p_1> \dots > p_k }}   \prod_{i=1}^k  \left( \frac{\omega(p_i)}{p_i-\omega(p_i)} \right)          |\mathcal{A}| 
\leq \frac{ \varepsilon }{k!}   S(\omega,\mathcal{P})^k         |\mathcal{A}|.
\end{align*} 
Recalling $M_2(\omega, \mathcal{P}) = \max_{p \in \mathcal{P}} ( \frac{\omega(p)}{p-\omega(p)}) $, we then observe the following:
\begin{align*}
 &\#\left\{n \in \mathcal{A}: \sum_{p \in Z(n)}   \left( \frac {\omega(p)}{p-\omega(p)}\right)  >  \delta  S(\omega, \mathcal{P})\right\} \\ 
 & \leq  \sum_{\substack{n \in \mathcal{A}  }}   \frac{k!}{ (\delta  S(\omega, \mathcal{P}) - k M_2(\omega, \mathcal{P}))^k} \mathop{\sum \dots \sum}_{\substack{ p_1, \dots, p_k\in Z(n) \\ p_1> \dots > p_k }} \prod_{i=1}^k  \left( \frac {\omega(p_i)}{p_i-\omega(p_i)}\right)\\  
  & \leq  \frac{\varepsilon    S(\omega,\mathcal{P})^k   }{ (\delta  S(\omega, \mathcal{P}) - k M_2(\omega, \mathcal{P}))^k}    |\mathcal{A}|.
\end{align*} 
We have shown that all four inequalities hold.
\end{proof}

Using Lemma~\ref{lem:lem10proof.inter}, we now complete the proof of Lemma~\ref{lem:proof.prop1.rare}.  

\begin{proof}[Proof of Lemma~\ref{lem:proof.prop1.rare}]
First we consider  a fixed product $p_1 \dots p_k$  with distinct  $p_1, \dots, p_k \in \mathcal{P}$ and write \begin{align*}
\Lambda(e_1,\dots,e_k) = \#\left\{n \in \mathcal{A}: \begin{array}{l} n  \mbox{ \rm mod } p_i\in \Omega(p_i)  \mbox{ when } e_i=0 \\  n \mbox{ \rm mod } p_i\not\in \Omega(p_i)   \mbox{ when } e_i=1 \end{array}\right\},
\end{align*}
and $t_i(0) = \omega(p_i)$, $t_i(1) = p_i - \omega(p_i)$ and  $M(\underline{e},\underline{u}) = \{i \in \{1,\dots,k\}: e_i \neq u_i\}$. As in the proof of Lemma~\ref{lem:uncommon.residue.classes}, we begin by recalling Corollary~\ref{cor:denominator.rearrangement}, which implies the following:
 \begin{align}
\nonumber   \sum_{\substack{a=1 \\ (a,p_1\dots p_k)=1}}^{p_1\dots p_k} \left|\sum_{\substack{n\leq N \\ n \in \mathcal{A}}} \exp\left(\frac{2 \pi i n a}{p_1\dots p_k} \right) \right|^2   &\geq
       \sum_{I \subseteq \{1, \dots, k\}} (-1)^{\# I}
          \mathop{\sum \dots \sum}_{\substack{ e_i \in \{0,1\}\\ \mbox{ \rm \begin{footnotesize}
     for
     \end{footnotesize}} i \not\in I }} \prod_{i   \not\in I} \Big(\frac{p_i}{t_i(e_i)} \Big)  \Bigg(     \mathop{\sum \dots \sum}_{\substack{ e_i \in \{0,1\}\\ \mbox{ \rm \begin{footnotesize}
     for
     \end{footnotesize}} i \in I }} \Lambda(e_1,\dots,e_k)   \Bigg)^2 \\  &=     \mathop{\sum \dots \sum}_{\substack{ e_i,u_i \in \{0,1\} }}\Lambda(\underline{e})\Lambda(\underline{u}) \Bigg(\sum_{\substack{I \subseteq \{1, \dots, k\} \\ i \in I \mbox{\rm \begin{footnotesize}
     when
     \end{footnotesize}} u_i \neq e_i  }} \!\!\!\!\!\! (-1)^{\# I}\prod_{i   \not\in I} \Big(\frac{p_i}{t_i(e_i)} \Big)   \Bigg) \nonumber\\ 
     &=    \mathop{\sum \dots \sum}_{\substack{ e_i,u_i \in \{0,1\} }}\Lambda(\underline{e})\Lambda(\underline{u})(-1)^{|M(\underline{e},\underline{u})|}\!\!\!\! \prod_{\substack{i  \in \{1, \dots, k\} \\ i \not \in M(\underline{e},\underline{u})  }}  \!\!\left(\frac{p_i - t_i(e_i)}{t_i(e_i)}\right).   \label{inequ:cor2}
    \end{align} 
    
Given a pair $(n_1, n_2) \in \mathcal{A} \times \mathcal{A}$, we then introduce the following notation: 
\begin{align*}
X(n_1,n_2)&=\{p \in \mathcal{P}: (n_1 \mbox{ mod } p)  \not\in \Omega(p), (n_2 \mbox{ mod } p) \not\in \Omega(p)\}, \\ 
Y_1(n_1,n_2)&=\{p \in \mathcal{P}: (n_1 \mbox{ mod } p)  \not\in \Omega(p), (n_2 \mbox{ mod } p) \in \Omega(p)\}, \\
Y_2(n_1,n_2)&=\{p \in \mathcal{P}: (n_1 \mbox{ mod } p)  \in \Omega(p), (n_2 \mbox{ mod } p) \not\in \Omega(p)\}, \\
Z(n_1,n_2)&=\{p \in \mathcal{P}: (n_1 \mbox{ mod } p)  \in \Omega(p), (n_2 \mbox{ mod } p) \in \Omega(p)\}.
\end{align*}
We further set $Z(n)=Z(n,n)$    and $z(n)=|Z(n)|$. 

\medskip
Summing (\ref{inequ:cor2}) over $p_1, \dots, p_k \in \mathcal{P}$ with $p_1>\dots>p_k$ and carefully considering the contribution of any given  $(n_1, n_2) \in \mathcal{A} \times \mathcal{A}$, we then get the following:  
 \begin{align}\label{inequ:prooflem1}
        &\mathop{\sum \dots \sum}_{\substack{ p_1, \dots, p_k \in \mathcal{P} \\ p_1> \dots > p_k }}     \sum_{\substack{a=1 \\ (a,p_1\dots p_k)=1}}^{p_1\dots p_k} \left|\sum_{\substack{n\leq N \\ n \in \mathcal{A}}} \exp\left(\frac{2 \pi i n a}{p_1\dots p_k} \right) \right|^2      
       \\
         &\geq  \sum_{n_1 \in \mathcal{A}} \sum_{n_2 \in \mathcal{A}} \mathop{\sum \dots \sum}_{\substack{ p_1, \dots, p_k \in \mathcal{P} \\ p_1> \dots > p_k }}     \prod_{\substack{i  \in \{1, \dots, k\} \\ p_i \in X(n_1,n_2) }}  \!\!\left(\frac{\omega(p_i)}{p_i - \omega(p_i)}\right)  
       \!\!\!\!\!\!\!\!\!\!  \prod_{\substack{i  \in \{1, \dots, k\} \\ p_i \in Y_1(n_1,n_2) \cup Y_2(n_1,n_2) }}  \!\!\!\!\!\!\!\!\left(-1\right) 
          \prod_{\substack{i  \in \{1, \dots, k\} \\ p_i \in Z(n_1,n_2) }}  \!\!\left(\frac{p_i - \omega(p_i)}{\omega(p_i)}\right) \nonumber \\
             &\geq  \sum_{n_1 \in \mathcal{A}} \sum_{n_2 \in \mathcal{A}} \mathop{\sum \dots \sum}_{\substack{ p_1, \dots, p_k \in X(n_1,n_2) \\ p_1> \dots > p_k }}     \prod_{\substack{i =1}}^k   \left(\frac{\omega(p_i)}{p_i - \omega(p_i)}\right)   
          - \frac{1}{(k-1)!}
            \sum_{n_1 \in \mathcal{A}} \sum_{n_2 \in \mathcal{A}} \mathop{\sum \dots \sum}_{\substack{ p_1, \dots, p_k \in \mathcal{P}   \\ p_1 \not\in X(n_1,n_2)  }}  \prod_{i=1}^k   \zeta(p_i;n_1,n_2) \nonumber \\
            &\mbox{where } \zeta(p_i;n_1,n_2) = \begin{cases} \left(\frac{\omega(p)}{p - \omega(p)}\right) \quad \quad  \mbox{ if } p \in X(n_1,n_2) \nonumber \\
            \left(\frac{p - \omega(p)}{\omega(p)}\right) 
          \quad\quad  \mbox{ if }   p \in Z(n_1,n_2)    \\ 
            1   \quad\quad\quad\quad\quad \,\,\,\, \mbox{ if } p \in Y_1(n_1,n_2) \cup  Y_2(n_1,n_2). \nonumber  \end{cases}
    \end{align}   
\underline{Step 1}: Lower bounds for the positive contribution of $X(n_1,n_2)$ to (\ref{inequ:prooflem1}).   

 We first use inequality (\ref{inequ:useful3}) of  Lemma~\ref{lem:lem10proof.inter}  and  recall that for 
 $M_2(\omega, \mathcal{P}) = \max_{p \in \mathcal{P}} ( \frac{\omega(p)}{p-\omega(p)}) $,
\begin{align*}
 &\#\left\{n \in \mathcal{A}: \sum_{p \in Z(n)}   \left( \frac {\omega(p)}{p-\omega(p)}\right)  >  \delta_1  S(\omega, \mathcal{P})\right\}  \leq  \frac{\widetilde{\varepsilon}(k,\omega,\mathcal{P})    S(\omega,\mathcal{P})^k   }{ (\delta_1  S(\omega, \mathcal{P}) - k M_2(\omega, \mathcal{P}))^k}    |\mathcal{A}|.
\end{align*} 
Since $\mathcal{P} \setminus X(n_1,n_2) \subseteq Z(n_1) \cup Z(n_2)$, we then find that
\begin{align*}
 &\sum_{n_1 \in \mathcal{A}} \sum_{n_2 \in \mathcal{A}} \mathop{\sum \dots \sum}_{\substack{ p_1, \dots, p_k \in X(n_1,n_2) \\ p_1> \dots > p_k }}     \prod_{\substack{i =1}}^k   \left(\frac{\omega(p_i)}{p_i - \omega(p_i)}\right)   \\
 &\geq \frac{1}{k!} \sum_{n_1 \in \mathcal{A}} \sum_{n_2 \in \mathcal{A}}   \left( \sum_{p \in \mathcal{P}} \left(\frac{\omega(p)}{p - \omega(p)}\right)- \sum_{p \in Z(n_1) \cup Z(n_2)} \left(\frac{\omega(p)}{p - \omega(p)}\right) - kM_2(\omega, \mathcal{P})  \right)^k
 \\
 &\geq   \frac{1}{k!}     \left(   (1- 2\delta_1) S(\omega, \mathcal{P})- kM_2(\omega, \mathcal{P})  \right)^k  \left( 1-\frac{ \widetilde{\varepsilon}(k,\omega,\mathcal{P}) S(\omega,\mathcal{P})^k   }{ (\delta_1  S(\omega, \mathcal{P}) - k M_2(\omega, \mathcal{P}))^k}  \right)^2  |\mathcal{A}|^2.
\end{align*} 
We now choose $\delta_1 =\frac{1}{3}$ and recall definition
\begin{align*}
c^*(k,\omega, \mathcal{P}) = \left( \sum_{p \in \mathcal{P}} \left(\frac{\omega(p)}{p-\omega(p)} \right) \right)^{-1} \left\{\frac{1}{3} \sum_{p \in \mathcal{P}} \left(\frac{\omega(p)}{p-\omega(p)} \right) - k \max_{p \in \mathcal{P}}  \left(\frac{\omega(p)}{p-\omega(p)} \right) \right\},
\end{align*}
which implies that 
\begin{align*}
 &\sum_{n_1 \in \mathcal{A}} \sum_{n_2 \in \mathcal{A}} \mathop{\sum \dots \sum}_{\substack{ p_1, \dots, p_k \in X(n_1,n_2) \\ p_1> \dots > p_k }}     \prod_{\substack{i =1}}^k   \left(\frac{\omega(p_i)}{p_i - \omega(p_i)}\right)   
 \geq   \frac{  c^*(k,\omega,\mathcal{P})^k}{k!}   \left( 1-\frac{ \widetilde{\varepsilon}(k,\omega,\mathcal{P})      }{  c^*(k,\omega,\mathcal{P})^k  }  \right)^2    S(\omega,\mathcal{P})^k  |\mathcal{A}|^2.
\end{align*} 
Recalling that $\widetilde{\varepsilon}(k,\omega,\mathcal{P}) \leq \frac{c^*(k,\omega,\mathcal{P})^k}{2}$ and looking at (\ref{inequ:prooflem1}),  we have ensured that  
\begin{align} \label{inequ:prooflem2}
&\mathop{\sum \dots \sum}_{\substack{ p_1, \dots, p_k \in \mathcal{P} \\ p_1> \dots > p_k }}     \sum_{\substack{a=1 \\ (a,p_1\dots p_k)=1}}^{p_1\dots p_k} \left|\sum_{\substack{n\leq N \\ n \in \mathcal{A}}} \exp\left(\frac{2 \pi i n a}{p_1\dots p_k} \right) \right|^2 \\ &\geq  
\frac{ c^*(k,\omega,\mathcal{P})^k}{ 4k!}  S(\omega,\mathcal{P})^k |\mathcal{A}|^2  
          - \frac{1}{(k-1)!}
            \sum_{n_1 \in \mathcal{A}} \sum_{n_2 \in \mathcal{A}} \mathop{\sum \dots \sum}_{\substack{ p_1, \dots, p_k \in \mathcal{P}   \\ p_1 \not\in X(n_1,n_2)  }}  \prod_{i=1}^k   \zeta(p_i;n_1,n_2). \nonumber
\end{align}

To bound the negative terms, we now separately consider small and large values of $z(n_1)$ and $z(n_2)$. 

\bigskip
\bigskip
\bigskip
  
\underline{Step 2}: Upper bounds for the negative contribution of small $z(n_i)$ to (\ref{inequ:prooflem2}). 

For a given $\delta$, we now consider the set $$\mathcal{C}(\delta)= \left\{ (n_1,n_2) \in \mathcal{A} \times \mathcal{A} :  z(n_1),z(n_2)\leq  \delta S(\omega, \mathcal{P}) \mbox{ and } \sum_{p \in Z(n_1,n_2)} \left(\frac{p-\omega(p)}{\omega(p)} \right)  \leq  \delta S(\omega, \mathcal{P}) \right\}.$$
We have that for any $(n_1,n_2) \in \mathcal{C}(\delta)$, 
\begin{align*} 
            &\mathop{\sum }_{\substack{  p \in \mathcal{P} \\
             p  \not\in X(n_1,n_2)  } }    \zeta(p ;n_1,n_2) \leq  z(n_1) + z(n_2)+ \sum_{p \in Z(n_1,n_2)} \left(\frac{p-\omega(p)}{\omega(p)} \right)   \leq 3  \delta S(\omega, \mathcal{P}),\\
             &\mathop{\sum }_{\substack{  p \in \mathcal{P}   } }    \zeta(p ;n_1,n_2) \leq S(\omega, \mathcal{P})+ z(n_1) + z(n_2)+ \sum_{p \in Z(n_1,n_2)} \left(\frac{p-\omega(p)}{\omega(p)} \right)   \leq (1+3  \delta) S(\omega, \mathcal{P}). 
\end{align*}
It follows that for any $(n_1,n_2) \in \mathcal{C}(\delta)$, 
\begin{align*}
\frac{1}{(k-1)!}
            \mathop{\sum \dots \sum}_{\substack{ p_1, \dots, p_k \in \mathcal{P}   \\ p_1 \not\in X(n_1,n_2)  }}  \prod_{i=1}^k   \zeta(p_i;n_1,n_2)  
            \leq  \frac{3\delta (1+ 3\delta)^{k-1}}{(k-1)!}  S(\omega, \mathcal{P})^k  .
\end{align*}
We now take $\delta =  (3k\cdot 2^{k+3} )^{-1} c^*(k,\omega,\mathcal{P})^k$ and note that
\begin{align*}
 \frac{3\delta (1+ 3\delta)^{k-1}}{(k-1)!}  \leq \frac{1}{4}  \frac{ c^*(k,\omega,\mathcal{P})^k}{ 4k!}.
\end{align*}
Continuing on from (\ref{inequ:prooflem2}) and simply using bound $|\mathcal{C}(\delta)| \leq |\mathcal{A}|^2$, this choice of $\delta$ ensures that 
\begin{align}\label{inequ:prooflem3}
&\mathop{\sum \dots \sum}_{\substack{ p_1, \dots, p_k \in \mathcal{P} \\ p_1> \dots > p_k }}     \sum_{\substack{a=1 \\ (a,p_1\dots p_k)=1}}^{p_1\dots p_k} \left|\sum_{\substack{n\leq N \\ n \in \mathcal{A}}} \exp\left(\frac{2 \pi i n a}{p_1\dots p_k} \right) \right|^2 \\ &\geq  \frac{3}{4}
\frac{ c^*(k,\omega,\mathcal{P})^k}{ 4k!}  S(\omega,\mathcal{P})^k  |\mathcal{A}|^2 
          - \frac{1}{(k-1)!}
            \sum_{\substack{ (n_1,n_2) \in  \in \mathcal{A} \times \mathcal{A} \\ (n_1,n_2) \not \in \mathcal{C}(\delta)}} \mathop{\sum \dots \sum}_{\substack{ p_1, \dots, p_k \in \mathcal{P}   \\ p_1 \not\in X(n_1,n_2)  }}  \prod_{i=1}^k   \zeta(p_i;n_1,n_2). \nonumber
\end{align}

\underline{Step 3}: Upper bounds for the negative contribution of $(\mathcal{A} \times \mathcal{A} ) \setminus\mathcal{C}(\delta)$ to (\ref{inequ:prooflem3}).

\medskip

\underline{Step 3A}: A  first upper bound via Lemma~\ref{lem:lem10proof.inter}.

Inequalities (\ref{inequ:useful1}) and (\ref{inequ:useful2}) of Lemma~\ref{lem:lem10proof.inter} and the definitions of $d^*(k,\omega,\mathcal{P})$  and $\mathcal{C}(\delta)$   tell  us   that
\begin{align*}
|(\mathcal{A} \times \mathcal{A} ) \setminus \mathcal{C}(\delta)| &\leq   \widetilde{\varepsilon}(k,\omega,\mathcal{P}) \! \left( \frac{2k! }{(\delta  S(\omega, \mathcal{P}) -k )^k }  + \frac{k!  }{ (\delta  S(\omega, \mathcal{P}) - k M_1(\omega, \mathcal{P}))^k} \right) \mathop{\sum \dots \sum}_{\substack{ p_1, \dots, p_k\in \mathcal{P} \\ p_1> \dots > p_k }}     \prod_{\substack{ i =1}}^k \left(\frac{\omega(p_i)}{p_i} \right)  |\mathcal{A}|^2 \\
&\leq   \widetilde{\varepsilon}(k,\omega,\mathcal{P}) \! \left( \frac{3  }{d^*(k,\omega,\mathcal{P})^k}   \right)    |\mathcal{A}|^2. 
\end{align*}
Further, for any $(n_1, n_2) \in (\mathcal{A} \times \mathcal{A} ) \setminus \mathcal{C}(\delta)$, 
\begin{align*}
 \mathop{\sum \dots \sum}_{\substack{ p_1, \dots, p_k \in \mathcal{P}   \\ p_1 \not\in X(n_1,n_2)  }}  \prod_{i=1}^k   \zeta(p_i;n_1,n_2)   &\leq \left( \sum_{p \in X(n_1,n_2)} \left(\frac{\omega(p)}{p-\omega(p)} \right) + z(n_1)+ z(n_2) +\sum_{p \in Z(n_1,n_2)}\left(\frac{p-\omega(p)}{\omega(p)} \right)\right)^k \\
 &\leq  4^k \max\left\{S(\omega,\mathcal{P})^k,|\mathcal{P}|^k, \left( \sum_{p \in Z(n_1,n_2)}\left(\frac{p-\omega(p)}{\omega(p)} \right)  \right)^k  \right\}. 
\end{align*}

Overall the contribution of 
$(n_1, n_2) \in (\mathcal{A} \times \mathcal{A} ) \setminus \mathcal{C}(\delta)$  to (\ref{inequ:prooflem3}) is therefore bounded by 
\begin{align}\label{inequ:inter1}
&\frac{1}{(k-1)!}\sum_{\substack{(n_1,n_2) \in \mathcal{A} \times \mathcal{A} \\ (n_1,n_2) \not\in \mathcal{C}(\delta)}}     \mathop{\sum \dots \sum}_{\substack{ p_1, \dots, p_k \in \mathcal{P}   \\ p_1 \not\in X(n_1,n_2)  }}  \prod_{i=1}^k   \zeta(p_i;n_1,n_2)  \\  
  &\leq   \left(\frac{ 4^k \max\left\{S(\omega,\mathcal{P})^k,|\mathcal{P}|^k\right\}}{(k-1)!}  \right)  |(\mathcal{A} \times \mathcal{A}) \setminus \mathcal{C}(\delta)| +  \frac{4^k}{(k-1)! } \sum_{(n_1,n_2) \in \mathcal{A} \times \mathcal{A}}^* \left( \sum_{p \in Z(n_1,n_2)}\left(\frac{p-\omega(p)}{\omega(p)} \right)  \right)^k   \nonumber\\ 
  &\leq   \frac{\widetilde{\varepsilon}(k,\omega,\mathcal{P})}{(k-1)!} \max\left\{1,\left(\frac{|\mathcal{P}|}{S(\omega,\mathcal{P})}\right)^k\right\} \left( \frac{ 4^{k+1} }{d^*(k,\omega,\mathcal{P})^k}   \right)   S(\omega,\mathcal{P})^k  |\mathcal{A}|^2 \nonumber \\ &+  \frac{4^k}{(k-1)! } \sum_{(n_1,n_2) \in \mathcal{A} \times \mathcal{A}}^* \left( \sum_{p \in Z(n_1,n_2)}\left(\frac{p-\omega(p)}{\omega(p)} \right)  \right)^k,   \nonumber
\end{align}
where $^*$ indicates that the summation is restricted to $(n_1,n_2)$ with $\sum_{p \in Z(n_1,n_2)}\left(\frac{p-\omega(p)}{\omega(p)} \right) \geq S(\omega, \mathcal{P})$.

\underline{Step 3B}: Consideration of large $\sum_{p \in Z(n_1,n_2)}\left(\frac{p-\omega(p)}{\omega(p)} \right)$.

 Suppose that $\sum_{p \in Z(n_1,n_2)}\left(\frac{p-\omega(p)}{\omega(p)} \right) \geq S(\omega, \mathcal{P})$. 
Recall  $\delta =  (3k\cdot 2^{k+3} )^{-1} c^*(k,\omega,\mathcal{P})^k$ and recall 
\begin{align*} 
&d^*(k,\omega,\mathcal{P})=\left( \sum_{p \in \mathcal{P}} \left(\frac{\omega(p)}{p-\omega(p)} \right) \right)^{-1} \left\{\frac{c^*(k,\omega,\mathcal{P})^k}{   2^{k+5} k} \sum_{p \in \mathcal{P}} \left(\frac{\omega(p)}{p-\omega(p)} \right) - k \max\left\{1,\max_{p \in \mathcal{P}}  \left(\frac{p-\omega(p)}{\omega(p)} \right) \right\} \right\} 
\end{align*}

Observe that
\begin{align*}
   k \max_{p \in \mathcal{P}}  \left(\frac{p-\omega(p)} {\omega(p)}\right) \leq  \!\left( \sum_{p \in \mathcal{P}} \left(\frac{\omega(p)}{p-\omega(p)} \right) \right)\! (  \delta - d^*(k,\omega, \mathcal{P}) )  \leq \left( \sum_{p \in Z(n_1,n_2)} \!\left(\frac{p-\omega(p)}{\omega(p)} \right) \right) (  \delta - d^*(k,\omega, \mathcal{P}) ).  
\end{align*}

Since $M_1(\omega, \mathcal{P}) = \max_{p \in \mathcal{P}}  \left(\frac{p-\omega(p)} {\omega(p)}\right)$,   we then get
\begin{align*}
  \mathop{\sum \dots \sum}_{\substack{ p_1, \dots, p_k\in Z(n_1,n_2) \\ p_1> \dots > p_k }} \prod_{i=1}^k  \left( \frac{p_i-\omega(p_i)} {\omega(p_i)}\right)
 &\geq \frac{1}{k!} \left( \sum_{p \in Z(n_1,n_2)}\left(\frac{p-\omega(p)}{\omega(p)} \right)  - k M_1(\omega, \mathcal{P}) \right)^k \\
 &\geq \frac{(1-  \delta + d^*(k,\omega, \mathcal{P}) )^k}{k!}   \left( \sum_{p \in Z(n_1,n_2)}\left(\frac{p-\omega(p)}{\omega(p)} \right)    \right)^k.
\end{align*} 
Thus  if $\sum_{p \in Z(n_1,n_2)}\left(\frac{p-\omega(p)}{\omega(p)} \right) \geq S(\omega, \mathcal{P})$, we have inequality 
\begin{align*}  &  \left( \sum_{p \in Z(n_1,n_2)}\left(\frac{p-\omega(p)}{\omega(p)} \right)  \right)^k   \leq \frac{k!}{(1-  \delta + d^*(k,\omega, \mathcal{P}) )^k}  \left(\mathop{\sum \dots \sum}_{\substack{ p_1, \dots, p_k\in Z(n_1,n_2) \\ p_1> \dots > p_k }} \prod_{i=1}^k  \left( \frac{p_i-\omega(p_i)} {\omega(p_i)}\right) \right).
\end{align*}

Applying inequality (\ref{inequ:useful4}) of  Lemma~\ref{lem:lem10proof.inter}, we finally observe that 
\begin{align*}
&\sum_{(n_1,n_2) \in \mathcal{A} \times \mathcal{A}}^* \left( \sum_{p \in Z(n_1,n_2)}\left(\frac{p-\omega(p)}{\omega(p)} \right)  \right)^k \\ &\leq \frac{k!}{(1-  \delta + d^*(k,\omega, \mathcal{P}) )^k}  \left(\sum_{\substack{(n_1,n_2) \in \mathcal{A} \times \mathcal{A} }}   \mathop{\sum \dots \sum}_{\substack{ p_1, \dots, p_k\in Z(n_1,n_2) \\ p_1> \dots > p_k }} \prod_{i=1}^k  \left( \frac{p_i-\omega(p_i)} {\omega(p_i)}\right) \right) \\
&\leq \frac{ k! \widetilde{\varepsilon}(k,\omega,\mathcal{P})}{(1-  \delta + d^*(k,\omega, \mathcal{P}) )^k} \mathop{\sum \dots \sum}_{\substack{ p_1, \dots, p_k\in \mathcal{P} \\ p_1> \dots > p_k }}     \prod_{\substack{ i =1}}^k \left(\frac{\omega(p_i)}{p_i} \right)  |\mathcal{A}|^2.
\end{align*}

\underline{Step 3C}: Putting everything together. Taking another look at (\ref{inequ:inter1}), we therefore get   \begin{align} \label{inequ:inter2}
&\frac{1}{(k-1)!}\sum_{\substack{(n_1,n_2) \in \mathcal{A} \times \mathcal{A} \\ (n_1,n_2) \not\in \mathcal{C}}}     \mathop{\sum \dots \sum}_{\substack{ p_1, \dots, p_k \in \mathcal{P}   \\ p_1 \not\in X(n_1,n_2)  }}  \prod_{i=1}^k   \zeta(p_i;n_1,n_2)  \\  
  &\leq   \frac{\widetilde{\varepsilon}(k,\omega,\mathcal{P})}{(k-1)!} \max\left\{1,\left(\frac{|\mathcal{P}|}{S(\omega,\mathcal{P})}\right)^k\right\} \left( \frac{ 4^{k+1} }{d^*(k,\omega,\mathcal{P})^k}   \right)   S(\omega,\mathcal{P})^k  |\mathcal{A}|^2 \nonumber \\ &+  \frac{4^k}{(k-1)! }\frac{\widetilde{  \varepsilon}(k,\omega,\mathcal{P})}{(1-  \delta + d^*(k,\omega, \mathcal{P}) )^k}     S(\omega,\mathcal{P})^k  |\mathcal{A}|^2    \nonumber \\
  &\leq   \frac{\widetilde{\varepsilon}(k,\omega,\mathcal{P})}{(k-1)!} \max\left\{1,\left(\frac{|\mathcal{P}|}{S(\omega,\mathcal{P})}\right)^k\right\} \left( \frac{ 4^{2k+1} }{d^*(k,\omega,\mathcal{P})^k}   \right)   S(\omega,\mathcal{P})^k  |\mathcal{A}|^2. \nonumber
\end{align}

However, the definition $\widetilde{\varepsilon}(k,\omega,\mathcal{P}) = \frac{1}{16 k} \min\left\{1,\left(\frac{S(\omega,\mathcal{P})}{|\mathcal{P}|}\right)^k  \right\}    \left( \frac{  c^*(k,\omega,\mathcal{P})^k d^*(k,\omega,\mathcal{P}     )^k } {  4^{2k+1}   } \right) $ ensures  
\begin{align*}
\frac{\widetilde{\varepsilon}(k,\omega,\mathcal{P})}{(k-1)!}  \max\left\{1,\left(\frac{|\mathcal{P}|}{S(\omega,\mathcal{P})}\right)^k\right\}   \left( \frac{  4^{2k+1}   }{ d^*(k,\omega,\mathcal{P}     )^k }  \right) \leq   \frac{1}{4}
\frac{ c^*(k,\omega,\mathcal{P})^k}{ 4 k!}.           
\end{align*}
Combining (\ref{inequ:prooflem3}) with (\ref{inequ:inter2}), we therefore find that 
 \begin{align*}
&\mathop{\sum \dots \sum}_{\substack{ p_1, \dots, p_k \in \mathcal{P} \\ p_1> \dots > p_k }}     \sum_{\substack{a=1 \\ (a,p_1\dots p_k)=1}}^{p_1\dots p_k} \left|\sum_{\substack{n\leq N \\ n \in \mathcal{A}}} \exp\left(\frac{2 \pi i n a}{p_1\dots p_k} \right) \right|^2  \geq 
\frac{ c^*(k,\omega,\mathcal{P})^k}{ 8 } \left\{ \mathop{\sum \dots \sum}_{\substack{ p_1, \dots, p_k \in \mathcal{P} \\ p_1> \dots > p_k }}  \prod_{i=1}^k \left( \frac{\omega(p_i)}{p_i-\omega(p_i)}\right)\right\} |\mathcal{A}|^2.
\end{align*}
Combining this lower bound with Linnik's large sieve inequality (Lemma~\ref{lem:largesieve.linnik}), we get the proposed upper bound on $|\mathcal{A}|$.  
    \end{proof}

\section{Lower and upper bounds for sumset decompositions}\label{sec:ostmannproofs}

In this section we prove Theorem~\ref{thm:2squares}. We show the following: If $\mathcal{A}+\mathcal{B}=\mathcal{S}+\mathcal{S}$ and $|\mathcal{A}|, |\mathcal{B}| \geq 2$, then 
\begin{align*} 
\sqrt{x}/(\log x)^{9/2} \ll |\mathcal{A}\cap [1,x]|, |\mathcal{B}\cap [1,x]| \ll \sqrt{x} (\log x)^4.
\end{align*}
The proof uses   various   sieve inequalities and our strategy closely resembles the treatment of asymptotic additive decompositions of the primes, with the large sieve inequality replaced by Theorem~\ref{thm:largesieveimprovement}. 

\subsection{Useful notation} The following notation is used throughout this section: 

\begin{defn}\label{def:residueclasses}
Consider a set   $\mathcal{C}$ and recall that $\mathcal{C}(x)=\mathcal{C} \cap [1,x]$. For a given prime $p$, we then define
\begin{align*}
&V_{\mathcal{C}}(p;x) = \{ (c \mbox{ mod } p) \in \mathbb{Z}/p\mathbb{Z}:  c  \in \mathcal{C}(x)\}, \\
&W_{\mathcal{C}}(p;x) = \{  (c \mbox{ mod } p)  \in V_{\mathcal{C}}(p;x): \forall c_1, c_2 \in \mathcal{C}(x) \,\, c_1 \equiv c_2 \equiv c (p) \Rightarrow c_1 \equiv c_2  (p^2)\},\\
&U_{\mathcal{C}}(p;x) = V_{\mathcal{C}}(p;x) \setminus W_{\mathcal{C}}(p;x). 
\end{align*} 
Furthermore,   $v_{\mathcal{C}}(p;x) = |V_{\mathcal{C}}(p;x)| $, $w_{\mathcal{C}}(p;x) =  |W_{\mathcal{C}}(p;x)| $ and $u_{\mathcal{C}}(p;x) =  |U_{\mathcal{C}}(p;x)| $.
\end{defn}

\subsection{Residue class structure}
 We begin the proof of Theorem~\ref{thm:2squares} by thinking about the structure of the residue classes of $\mathcal{A}$ and $\mathcal{B}$ with $\mathcal{A} + \mathcal{B} =  \mathcal{S}+\mathcal{S}$.

\begin{lemma}\label{lem:basicstructure}

Suppose $\mathcal{A}, \mathcal{B} \subseteq \mathbb{Z}$ with  $\mathcal{A} + \mathcal{B}   =  \mathcal{S}+\mathcal{S}$. Using the notation from Definition~\ref{def:residueclasses}, we   have the following relationships for any $x>0$ and any prime $p$ with $p \equiv 3 \mbox{ mod } 4$:
\begin{align*}
    &U_{\mathcal{A} }(p;x) \cap (-V_{\mathcal{B} }(p;x)) = \emptyset  \quad \mbox{ and } \quad U_{\mathcal{B} }(p;x) \cap (-V_{\mathcal{A} }(p;x)) = \emptyset, \\
    &u_{\mathcal{A} }(p;x)  + v_{\mathcal{B}}(p;x) \leq p  \quad\quad\,\,\,\, \mbox{ and }  \quad u_{\mathcal{B} }(p;x) +v_{\mathcal{A} }(p;x)  \leq p.
\end{align*}
\end{lemma}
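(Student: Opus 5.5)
The plan is to use the classical fact that for a prime $p \equiv 3 \bmod 4$, if $p \mid x^2+y^2$ then $p \mid x$ and $p \mid y$, so $p^2 \mid x^2+y^2$. Thus every element $s \in \mathcal{S}+\mathcal{S}$ with $s \equiv 0 \bmod p$ automatically satisfies $s \equiv 0 \bmod p^2$. Everything else is a residue class bookkeeping argument using Definition~\ref{def:residueclasses}. By symmetry in $\mathcal{A}$ and $\mathcal{B}$ it suffices to prove the two statements involving $U_{\mathcal{A}}(p;x)$.

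For the disjointness statement, suppose for contradiction that some class $c \bmod p$ lies in $U_{\mathcal{A}}(p;x) \cap (-V_{\mathcal{B}}(p;x))$. Since $c \in U_{\mathcal{A}}(p;x)$, there exist $a_1, a_2 \in \mathcal{A}(x)$ with $a_1 \equiv a_2 \equiv c \bmod p$ but $a_1 \not\equiv a_2 \bmod p^2$. Since $-c \in V_{\mathcal{B}}(p;x)$, there is $b \in \mathcal{B}(x)$ with $b \equiv -c \bmod p$.

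Then $a_1+b$ and $a_2+b$ both lie in $\mathcal{A}+\mathcal{B} = \mathcal{S}+\mathcal{S}$, and both are divisible by $p$. By the fact above, both are divisible by $p^2$. Hence $a_1 \equiv -b \equiv a_2 \bmod p^2$, which is a contradiction. The other relation $U_{\mathcal{B}}(p;x) \cap (-V_{\mathcal{A}}(p;x)) = \emptyset$ follows identically with the roles of $\mathcal{A}$ and $\mathcal{B}$ swapped.

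For the counting inequality, note that $U_{\mathcal{A}}(p;x)$ and $-V_{\mathcal{B}}(p;x)$ are disjoint subsets of $\mathbb{Z}/p\mathbb{Z}$. Negation is a bijection on $\mathbb{Z}/p\mathbb{Z}$, so $|-V_{\mathcal{B}}(p;x)| = v_{\mathcal{B}}(p;x)$. It follows that $u_{\mathcal{A}}(p;x) + v_{\mathcal{B}}(p;x) \le p$, and symmetrically $u_{\mathcal{B}}(p;x) + v_{\mathcal{A}}(p;x) \le p$.

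There is no real obstacle. The only point to state carefully is the number-theoretic input: $-1$ is a non-residue modulo $p \equiv 3 \bmod 4$, so $x^2 \equiv -y^2 \bmod p$ forces $p \mid x, y$. I would include this one-line justification. Note that the restriction to $x$ plays no role beyond the elements being chosen from $\mathcal{A}(x)$ and $\mathcal{B}(x)$.
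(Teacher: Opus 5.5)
Your proposal is correct and follows the paper's proof: the paper likewise combines the fact that $p \mid n \in \mathcal{S}+\mathcal{S}$ forces $p^2 \mid n$ for $p \equiv 3 \bmod 4$ with a pair $a, a' \in \mathcal{A}(x)$ that agree mod $p$ but differ mod $p^2$, which gives disjointness. The counting inequality then follows from disjointness inside $\mathbb{Z}/p\mathbb{Z}$, just as you describe.
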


\begin{proof}
Suppose  $\mathcal{A}  + \mathcal{B}   =  \mathcal{S}+\mathcal{S}$ and $p \equiv 3 \mbox{ mod } 4$. Recall the following    property of $\mathcal{S} + \mathcal{S}$: If $n \in \mathcal{S} + \mathcal{S}$, then $n \equiv 0$ mod $p$ implies $n \equiv 0$ mod $p^2$.

\medskip 
Let $a  \in \mathcal{A}(x)  $ and $b \in \mathcal{B}(x)  $. Then $a+b \in \mathcal{S}+\mathcal{S}$. Suppose $a + b\equiv 0$ mod $p$. This implies $a + b \equiv 0$ mod $p^2$. Suppose further that there exists $a' \in \mathcal{A}(x)$ with $ a'  \equiv a $ mod $p$, but $a'  \not \equiv a $ mod $p^2$. Then $a + b \equiv 0$ mod $p$ implies both $b \equiv -a $ mod $p^2$ and $b \equiv -a' $ mod $p^2$, a contradiction. Hence $a + b \equiv 0$ mod $p$ tells us that $\mathcal{A}(x)  $ contains no elements of the residue classes $a+kp$ mod $p^2$ for $k \in \{1, \dots, p-1\}$ and, similarly, $\mathcal{B}(x)  $ contains no elements of the residue classes $b+kp$ mod $p^2$ for $k \in \{1, \dots, p-1\}$. 

\medskip If $(a \mbox{ mod } p) \in U_{\mathcal{A}}(p;x)$, then $\mathcal{A}(x)$ contains at least two distinct residue classes of the form $a+kp$, and the discussion above shows that there cannot be any $b \in \mathcal{B}(x)$ with $a + b\equiv 0$ mod $p$. Hence we have $-a \not\in  V_{\mathcal{B}}(p;x)$ and  $U_{\mathcal{A} }(p;x) \cap (-V_{\mathcal{B} }(p;x)) = \emptyset$. Similarly,   $ U_{\mathcal{B} }(p;x) \cap (-V_{\mathcal{A} }(p;x)) = \emptyset$. Since $|\mathbb{Z}/p\mathbb{Z}|=p$, disjointedness also implies $u_{\mathcal{A} }(p;x)  + v_{\mathcal{B}}(p;x) \leq p$ and $ u_{\mathcal{B} }(p;x) +v_{\mathcal{A} }(p;x)  \leq p$.
\end{proof}

\subsection{Bounds on the product of set cardinalities}
We now give simple bounds on the product of cardinalities $|\mathcal{A}  \cap (-\infty,x]||\mathcal{B} \cap (-\infty,x]| $. (From later results it follows that the given bounds also hold for the product  $|\mathcal{A}(x) ||\mathcal{B} (x)|.$)

\begin{lemma}\label{lem:productsize}
Suppose $\mathcal{A}\subseteq \mathbb{Z}$ and $\mathcal{B}  \subseteq \mathbb{Z}$ with  $\mathcal{A}  + \mathcal{B} = \mathcal{S}+\mathcal{S}$.
 Then for large $x$, 
\begin{align*}
 x \log(x)^{-1/2} \ll   |\mathcal{A}  \cap (-\infty,x]||\mathcal{B} \cap (-\infty,x]| \ll x \log(x)^2.
\end{align*} 
$($The implied constants and the required size of $x$ depend on $\mathcal{A}$ and $\mathcal{B}$.$)$
\end{lemma}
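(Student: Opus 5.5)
Both bounds come from counting sums $a+b$ with $a\in\mathcal{A}$, $b\in\mathcal{B}$, each side using a different property of $\mathcal{S}+\mathcal{S}$.

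\emph{Lower bound.} Every $n\in(\mathcal{S}+\mathcal{S})\cap[1,x]$ is a sum $a+b$. Since $\mathcal{A}+\mathcal{B}=\mathcal{S}+\mathcal{S}$ is bounded below, both $\mathcal{A}$ and $\mathcal{B}$ are bounded below, say by $-C$. Hence $a,b\le x+C$, and the number of pairs is at most $|\mathcal{A}\cap(-\infty,x+C]|\,|\mathcal{B}\cap(-\infty,x+C]|$. Landau's theorem gives $\#(\mathcal{S}+\mathcal{S})\cap[1,x]\asymp x/\sqrt{\log x}$. Replacing $x$ by $x-C$ yields
\[
|\mathcal{A}\cap(-\infty,x]|\,|\mathcal{B}\cap(-\infty,x]|\gg x(\log x)^{-1/2}.
\]

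\emph{Upper bound.} First pick $b_0\in\mathcal{B}$ and note that $\mathcal{A}+b_0\subseteq\mathcal{S}+\mathcal{S}$. This shows $\mathcal{A}\cap(-\infty,x]$ has size $\ll x/\sqrt{\log x}$, and the same holds for $\mathcal{B}$. We then sieve the set $\mathcal{A}(x)$ using Lemma~\ref{lem:basicstructure}. The key step is to show that large sets force many collisions modulo $p^2$ for primes $p\equiv 3\bmod 4$. For $a\in\mathcal{A}(x)$ and $b\in\mathcal{B}(x)$ with $a+b\equiv0\pmod p$, we must have $a+b\equiv 0\pmod{p^2}$.

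Following Elsholtz--Harper, I would work instead with the product directly through Gallagher's larger sieve (Lemma~\ref{lem:largesieve.gallagher}) applied to both sets. By Lemma~\ref{lem:basicstructure}, $u_{\mathcal{A}}(p)+v_{\mathcal{B}}(p)\le p$ and $u_{\mathcal{B}}(p)+v_{\mathcal{A}}(p)\le p$. I would split into two cases.

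\textbf{Case 1: $u$ is small.} If $u_{\mathcal{A}}(p)$ is small for many $p$, then $\mathcal{A}$ essentially has at most one class mod $p^2$ above most classes mod $p$. Hence $\mathcal{A}$ occupies about $v_{\mathcal{A}}(p)\le p$ classes mod $p^2$. Gallagher's sieve with moduli $p^2$, $p\le x^{1/2}$, then gives $|\mathcal{A}(x)|\ll \sqrt{x}$ up to logs.

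\textbf{Case 2: $u$ is large.} Otherwise $v_{\mathcal{B}}(p)\le p-u_{\mathcal{A}}(p)$. Gallagher's sieve applied to both sets then gives
\[
|\mathcal{A}(x)|\ll\frac{\log x}{\sum_p \log p/v_{\mathcal{A}}(p^2)},\qquad
|\mathcal{B}(x)|\ll\frac{\log x}{\sum_p \log p/v_{\mathcal{B}}(p)}.
\]
Using $v_{\mathcal{A}}(p^2)\le w_{\mathcal{A}}(p)+u_{\mathcal{A}}(p)\,p$ and the constraint above, Cauchy--Schwarz (or the inequality $\frac1{s}+\frac1{t}\ge\dots$) should give a product bound of the form $x(\log x)^2$. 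The sum of $\log p/p$ over $p\equiv 3\bmod 4$ up to $\sqrt x$ is $\asymp\log x$, and the two squared $\log x$ numerators account for the $(\log x)^2$.

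The main obstacle is the upper bound, specifically finding the right combination of sieves modulo $p$ and $p^2$ so that the product is bounded uniformly in how the classes split between $U$ and $W$. I would first try the pointwise inequality
\[
\frac{1}{v_{\mathcal{A}}(p^2)}+\frac{1}{v_{\mathcal{B}}(p)}\gg\frac1p,
\]
choosing for each prime which sieve it contributes to. I would then use $\min(X,Y)^2\le XY$-type manipulations to pass from the sum to the product bound. Negative elements of $\mathcal{A}$ and $\mathcal{B}$ are handled by the shift $C$ as in the lower bound.
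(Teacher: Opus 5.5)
Your lower bound is correct and is essentially the paper's argument. The upper bound, however, has a genuine gap, and the tool you propose cannot close it.

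Your two displayed Gallagher bounds misquote Lemma~\ref{lem:largesieve.gallagher}. The numerator is $\sum_{p\in\mathcal P}\log p-\log N$, which is of the size of the largest prime used, not $\log x$. With $\log x$ there and the primes $p\equiv 3\bmod 4$ up to $\sqrt x$ you intend, your bound for $\mathcal B$ would give $|\mathcal S(x)|=O(1)$ when $\mathcal A=\mathcal B=\mathcal S$. With the correct numerator, Gallagher's sieve is useless in the configuration that matters. The hypothesis gives information only at primes $p\equiv 3\bmod 4$, and there one can have $v_{\mathcal A}(p)\approx u_{\mathcal B}(p)\approx p/2$. This is exactly what $\mathcal A=\mathcal B=\mathcal S$ looks like, and for that pair the product really is $\asymp x$. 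Then $\sum_{p\le Q,\,p\equiv 3 (4)}\log p/v_{\mathcal A}(p)-\log x\approx\log Q-\log x$ is positive only for $Q>x$, where the numerator already exceeds $x$. At moduli $p^2$, the information $v_{\mathcal A}(p^2)\le w_{\mathcal A}(p)+p\,u_{\mathcal A}(p)\approx p^2/2$ is worthless.

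Even your Case~1 fails as stated. The bound $v_{\mathcal A}(p^2)\le p$ only gives $\sum_{p\le\sqrt x,\,p\equiv 3(4)}\log p/v_{\mathcal A}(p^2)\ge(\tfrac14+o(1))\log x$, so the Gallagher denominator need not be positive. What works there is direct counting at a single prime $p\asymp\sqrt x$, via $|\mathcal A(x)|\le v_{\mathcal A}(p^2)(x/p^2+1)$. But it must be applied to the larger of the two sets to bound the product. Finally, your pointwise inequality $\frac1{v_{\mathcal A}(p^2)}+\frac1{v_{\mathcal B}(p)}\gg\frac1p$ holds trivially because $v_{\mathcal B}(p)\le p$, so it carries no information.

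The missing idea is how to exploit the constraint $u_{\mathcal B}(p)+v_{\mathcal A}(p)\le p$ from Lemma~\ref{lem:basicstructure}. It involves $u_{\mathcal B}$ rather than $v_{\mathcal B}$, so you must sieve $\mathcal B(x)$ as if the classes in $W_{\mathcal B}(p;x)$ were missing, although they need not be. The paper does this at the primes $p\equiv 3\bmod 4$ in $[\tfrac12\sqrt x,\sqrt x]$, assuming $|\mathcal A(x)|\le|\mathcal B(x)|$, and splits into two cases.

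\emph{Either} for some such $p$, more than $\frac{w_{\mathcal B}(p;x)}{100p}|\mathcal B(x)|$ elements of $\mathcal B(x)$ lie in $W_{\mathcal B}$-classes. These occupy only $w_{\mathcal B}(p;x)$ classes mod $p^2\asymp x$, which forces $|\mathcal B(x)|\ll\sqrt x$, so the product is $\ll x$.

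\emph{Or} for every such $p$, the classes $\Omega(p)=W_{\mathcal B}(p;x)\cup\bigl(\mathbb Z/p\mathbb Z\setminus V_{\mathcal B}(p;x)\bigr)$ are underrepresented by a factor $\frac1{100}$. Then the new Lemma~\ref{lem:uncommon.residue.classes} (with $K=1$) gives $|\mathcal B(x)|\ll x\bigl(\sum_p\frac{p-u_{\mathcal B}(p;x)}{u_{\mathcal B}(p;x)}\bigr)^{-1}$. Montgomery's sieve (Lemma~\ref{lem:largesieve.montgomery}) gives $|\mathcal A(x)|\ll x\bigl(\sum_p\frac{p-v_{\mathcal A}(p;x)}{v_{\mathcal A}(p;x)}\bigr)^{-1}$. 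Since $u_{\mathcal B}\le p-v_{\mathcal A}$, the product of the two summands is at least $1$ for each $p$. Cauchy--Schwarz then gives $|\mathcal A(x)||\mathcal B(x)|\ll x^2|\mathcal P|^{-2}\ll x(\log x)^2$. The $(\log x)^2$ comes from $|\mathcal P|\asymp\sqrt x/\log x$, not from two $\log x$ numerators.
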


\begin{proof} Throughout the proof we use the notation given in Definition~\ref{def:residueclasses}.  We begin our proof with the lower bound, which is very easy to obtain.

\medskip

\underline{Lower bound}: Write $K=\min( \mathcal{A}  \cup \mathcal{B} )$.   We have $\mathcal{A}+\mathcal{B} =  \mathcal{S}+\mathcal{S}$ and so  $c \in  \mathcal{S}+\mathcal{S}$ implies $c=a+b$ for some $a \in \mathcal{A}$ and $b\in \mathcal{B}$ with $a \leq c-K$ and $b \leq c-K$. Then $$(\mathcal{A}\cap (-\infty,x]) + (\mathcal{B}\cap (-\infty,x])\supseteq (\mathcal{S} + \mathcal{S}) \cap \left[1,x + K\right].$$  
The number of integers up to $x$ which are sums of two squares is $\gg x \log(x)^{-1/2}$. It follows that 
$$|\mathcal{A}  \cap (-\infty,x]||\mathcal{B} \cap (-\infty,x]| \gg_K x\log(x)^{-1/2},$$
with the implied constant dependent on $K =\min( \mathcal{A}  \cup \mathcal{B} )$.

\medskip
Now we prove the upper bound in Lemma~\ref{lem:productsize}. Fix $x$ and assume $ |\mathcal{A}(x)| \leq |\mathcal{B}(x)|$. 
    
    \medskip 
    \underline{Upper bound  1}: Suppose first that there exists some $p \equiv 3$ mod $4$ with $p \in [\frac{1}{2}\sqrt{x},\sqrt{x}]$ and $$\#\{b \in \mathcal{B}(x): (b \mbox{ mod } p) \in W_{\mathcal{B} }(p;x)\} > \frac{w_{\mathcal{B} }(p;x)}{100p}  |\mathcal{B}(x)|.$$ 
    Since $( h \mbox{ mod } p) \in W_{\mathcal{B} }(p;x)$ implies $(\forall b_1, b_2 \in \mathcal{B}(x) \,\, b_1 \equiv b_2 \equiv h (p) \Rightarrow b_1 \equiv b_2 (p^2))$, the elements of $\{b \in \mathcal{B}(x): (b \mbox{ mod } p) \in W_{\mathcal{B} }(p;x)\}$ occupy only $w_{\mathcal{B} }(p;x)$ residue classes modulo $p^2$. But then $$ \frac{w_{\mathcal{B} }(p;x)}{100p} |\mathcal{B}(x)| < \#\{b \in \mathcal{B}(x): (b \mbox{ mod } p) \in W_{\mathcal{B} }(p;x)\} \leq w_{\mathcal{B} }(p;x)\left(\frac{x}{p^2}+1\right).$$
    Since the inequality is strict, $w_{\mathcal{B}}(p;x) \neq 0$. 
Rearranging, we get $ |\mathcal{B}(x)|  \ll \sqrt{x}$ and $|\mathcal{A}(x)||\mathcal{B}(x)| \ll x$. It follows that also 
$|\mathcal{A}  \cap (-\infty,x]||\mathcal{B} \cap (-\infty,x]| \ll_K x$, with the implied constant dependent on $\min(\mathcal{A} \cup \mathcal{B})$.

    \medskip \underline{Upper bound  2}: Now we may assume that for every $p \equiv 3$ mod $4$ with $p \in [\frac{1}{2}\sqrt{x},\sqrt{x}]$, 
    $$\#\{b \in \mathcal{B}(x): (b \mbox{ mod } p) \in W_{\mathcal{B} }(p;x)\} \leq \frac{w_{\mathcal{B}}(p;x)}{100p} |\mathcal{B}(x)|.$$ 

Since $  b \in \mathcal{B}(x)$ implies $  (b \mbox{ mod } p) \in V_{\mathcal{B} }(p;x)$,   also   $\#\{b \in \mathcal{B}(x): (b \mbox{ mod } p) \in (\mathbb{Z}/p\mathbb{Z}) \setminus V_{\mathcal{B} }(p;x)\} = 0$ and  
     $$\#\{b \in \mathcal{B}(x): (b \mbox{ mod } p) \in W_{\mathcal{B} }(p;x)\cup ((\mathbb{Z}/p\mathbb{Z}) \setminus V_{\mathcal{B} }(p;x))\} \leq \frac{w_{\mathcal{B}}(p;x)+p-v_{\mathcal{B}}(p;x) }{100p} |\mathcal{B}(x)|.$$

We   apply Lemma~\ref{lem:uncommon.residue.classes} with $\mathcal{P}=\{p \in [\frac{1}{2}\sqrt{x},\sqrt{x}]: p \equiv 3 \mbox{ mod } 4\}$ and $\Omega(p)=W_{\mathcal{B} }(p;x)\cup ((\mathbb{Z}/p\mathbb{Z}) \setminus V_{\mathcal{B} }(p;x))$ and $\varepsilon = \frac{1}{100}$.    Then $\omega(p) =  p -v_{\mathcal{B}}(p;x)  +w_{\mathcal{B}}(p;x)  = p- u_{\mathcal{B}}(p;x) 
$.  We find that 
 \begin{align*}
    |\mathcal{B}(x)| \ll x \Bigg( \sum_{\substack{p \in [\frac{1}{2}\sqrt{x},\sqrt{x}] \\ p \equiv 3 (4) }} \left( \frac{p- u_{\mathcal{B}}(p;x) }{u_{\mathcal{B}}(p;x)  } \right)\Bigg)^{-1}.
\end{align*} 
     On the other hand, applying the large sieve (Lemma~\ref{lem:largesieve.montgomery}) to $\mathcal{A}(x)$ with $\Omega(p)= (\mathbb{Z}/p\mathbb{Z}) \setminus V_{\mathcal{A}}(p;x)$
      gives 
      \begin{align*}
    |\mathcal{A}(x)| \ll x \Bigg( \sum_{\substack{p \in [\frac{1}{2}\sqrt{x},\sqrt{x}] \\ p \equiv 3 (4) }} \left(\frac{p-v_{\mathcal{A}}(p;x)}{v_{\mathcal{A}}(p;x)} \right)\Bigg)^{-1}.
\end{align*} 
Recall now from Lemma~\ref{lem:basicstructure} that $u_{\mathcal{B}}(p;x)+ v_{\mathcal{A}}(p;x) \leq p$. Since $ \frac{p-x}{x} \geq \frac{p-y}{y}$ for $x \leq y$ when $x,y \in [1,p]$,   
\begin{align*}
\left( \frac{p- u_{\mathcal{B}}(p;x) }{u_{\mathcal{B}}(p;x)  } \right) \left(\frac{p-v_{\mathcal{A}}(p;x)}{v_{\mathcal{A}}(p;x)} \right)&\geq \left(\frac{v_{\mathcal{A}}(p;x)} {p-v_{\mathcal{A}}(p;x)}\right) \left( \frac{p- v_{\mathcal{A}}(p;x) }{v_{\mathcal{A}}(p;x)  } \right)= 1.
\end{align*} 
By the Cauchy-Schwarz inequality, we then get the   upper bound 
\begin{align*}
     |\mathcal{A}(x)||\mathcal{B}(x)| &\ll x^2 \Bigg( \sum_{\substack{p \in [\frac{1}{2}\sqrt{x},\sqrt{x}] \\ p \equiv 3 (4) }} \left( \frac{p- v_{\mathcal{A}}(p;x) }{v_{\mathcal{A}}(p;x)  } \right)\Bigg)^{-1}  \Bigg( \sum_{\substack{p \in [\frac{1}{2}\sqrt{x},\sqrt{x}] \\ p \equiv 3 (4) }} \left( \frac{p- u_{\mathcal{B}}(p;x) }{u_{\mathcal{B}}(p;x)  } \right)\Bigg)^{-1}\\
     &\leq x^2 \Big( \sum_{\substack{p \in [\frac{1}{2}\sqrt{x},\sqrt{x}]   \\ p \equiv 3 (4) }} \left( \frac{p- v_{\mathcal{A}}(p;x) }{v_{\mathcal{A}}(p;x)  } \right)^{1/2} \left( \frac{p- u_{\mathcal{B}}(p;x) }{u_{\mathcal{B}}(p;x)  } \right)^{1/2}\Big)^{-2} 
     \ll x\log(x)^2. 
\end{align*}
It follows that also 
$|\mathcal{A}  \cap (-\infty,x]||\mathcal{B} \cap (-\infty,x]| \ll_K x\log(x)^2$.
\end{proof}

\subsection{Bounds on the individual sets   } 

We now apply many different sieve estimates  to prove upper and lower bounds on $|\mathcal{A}(x)|$ and $|\mathcal{B}(x)|$ for $\mathcal{A}+\mathcal{B}=\mathcal{S}+\mathcal{S}$.   The sieve estimates used are Brun's small sieve, Montgomery's large sieve, Gallagher's larger sieve, as well as Theorem~\ref{thm:largesieveimprovement}.

\subsubsection{Application of Brun's small sieve}

We first show that $\mathcal{A}(x)$ and $\mathcal{B}(x)$ have at least $\log(x)^{c}$ elements. We use Brun's small sieve, stated in Lemma~\ref{lem:small.brun}. 

\begin{lemma}\label{lem:size.step1}
Suppose $\mathcal{A}\subseteq \mathbb{Z}$ and $\mathcal{B}  \subseteq \mathbb{Z}$ with  $\mathcal{A}  + \mathcal{B} = \mathcal{S}+\mathcal{S}$. Suppose  that $|\mathcal{A}|,|\mathcal{B}| \geq 2$. Let $c \in \mathbb{N}$.

 Then  for sufficiently large $x$, 
\begin{align*}
\log(x)^{c} \ll   |\mathcal{A}(x)|, |\mathcal{B}(x)| \ll x \log(x)^{-c},
\end{align*}
where the implied constants  and the required size of $x$ depend on $ \mathcal{A}$, $ \mathcal{B} $ and $c$.
\end{lemma}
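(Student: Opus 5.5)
The plan is to prove only the upper bound $|\mathcal{B}(x)| \ll x\log(x)^{-c}$ (and symmetrically for $\mathcal{A}$). The lower bound then follows from Lemma~\ref{lem:productsize}. Indeed, $|\mathcal{A}\cap(-\infty,x]|\,|\mathcal{B}\cap(-\infty,x]| \gg x\log(x)^{-1/2}$, and both sets are bounded below by $K=\min(\mathcal{A}\cup\mathcal{B})$. Hence $|\mathcal{A}\cap(-\infty,x]| = |\mathcal{A}(x)|+O_K(1)$, and likewise for $\mathcal{B}$. So an upper bound $|\mathcal{B}(x)| \ll x\log(x)^{-c-1}$ forces $|\mathcal{A}(x)| \gg \log(x)^{c+1/2}$, and vice versa.

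For the upper bound, fix two distinct elements $a_1,a_2\in\mathcal{A}$, which exist since $|\mathcal{A}|\ge 2$. Every $b\in\mathcal{B}$ satisfies $a_1+b,\,a_2+b\in\mathcal{S}+\mathcal{S}$. For a prime $p\equiv 3 \bmod 4$, every element of $\mathcal{S}+\mathcal{S}$ divisible by $p$ is divisible by $p^2$. Hence if $p$ exactly divides $a_1+b$, or exactly divides $a_2+b$, we get a contradiction. I would use only the simplest consequence. Let $\mathcal{C}=\{n\le x+|a_1|: n \in \mathcal{S}+\mathcal{S}\}$ and consider the shifted set $b\mapsto n=b+a_1\in\mathcal{C}$, with $n':=n+(a_2-a_1)\in\mathcal{S}+\mathcal{S}$. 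For primes $p\equiv 3\bmod 4$ with $p\nmid (a_2-a_1)$, $p\le x^{\varepsilon}$, the number $n$ must avoid the residue classes where $p\,\|\,n$ or $p\,\|\,n'$. Avoiding all of these would cost a density factor of only about $1-2/p+O(1/p^2)$ per prime.

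Rather than sieving within $\mathcal{S}+\mathcal{S}$, I would sieve the integers $b\in[1,x]$ directly. The density of "$p$ exactly divides $b+a_1$" is $1/p-1/p^2$, which is not a single residue class mod $p$. To fit Lemma~\ref{lem:small.brun}, I apply it to the sequence $\mathcal{C}=\{(m+a_1)(m+a_2): m\le x\}$ with condition ``$p\nmid c$'', after first discarding the integers $m$ with $p^2\mid (m+a_i)$ for some $p>P_0$. The discarded set has size $\ll x/P_0$ overall plus $O(x^{\varepsilon}\pi(x^\varepsilon))$. This is wasteful, so instead I would restrict $\mathcal{P}$ to primes $p\equiv 3\bmod 4$, $p\nmid(a_2-a_1)$, $p\le x^{\varepsilon}$. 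I handle the ``$p^2$ divides'' exceptions by a union bound: the elements $b$ with $p^2\mid b+a_i$ for some $p\in\mathcal{P}$ with $p>y$ number at most $\sum_{p>y} 2x/p^2 \ll x/y$.

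The remaining $b$ avoid both classes $-a_1,-a_2$ mod $p$ for every $p\in\mathcal{P}\cap(y,x^{\varepsilon}]$. So $\omega(p)=2$, hypotheses (1)–(3) hold with $\kappa=1$, and Brun's sieve gives
\begin{align*}
|\mathcal{B}(x)| \ll x\prod_{\substack{y<p\le x^{\varepsilon}\\ p\equiv 3 (4)}}\Bigl(1-\frac{2}{p}\Bigr) + \frac{x}{y} \ll x\Bigl(\frac{\log y}{\log x}\Bigr) + \frac{x}{y},
\end{align*}
using that primes $p\equiv 3 \bmod 4$ have relative density $1/2$. Taking $y=\log(x)^{c+1}$ gives $x\log\log x/\log x$. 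This is only one power of $\log$, not $\log(x)^{-c}$, so the two-class information is insufficient.

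The main obstacle is therefore getting an arbitrary power $c$. For that I would use more elements of $\mathcal{A}$. If $\mathcal{A}$ is infinite, pick $h=2c+2$ distinct elements $a_1,\dots,a_h\in\mathcal{A}$ and sieve out the $h$ classes $-a_i$ mod $p$ (restricting to $p$ not dividing any difference $a_i-a_j$). This gives a factor $(\log y/\log x)^{h/2}$, while the exceptional $p^2$-divisible set still costs $\ll hx/y$. Choosing $y=\log(x)^{c}$ then yields $|\mathcal{B}(x)| \ll x\log(x)^{-c}$. If $\mathcal{A}$ is finite, then $|\mathcal{A}(x)|=O(1)$ and Lemma~\ref{lem:productsize} gives $|\mathcal{B}(x)|\gg x\log(x)^{-1/2}$. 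But $\mathcal{B}+a_1\subseteq\mathcal{S}+\mathcal{S}$, which has size $\ll x\log(x)^{-1/2}$, and I expect a contradiction from the two-shift sieve above (density roughly $x/\log x$) versus $x\log(x)^{-1/2}$. That argument shows $\mathcal{A}$ is infinite, after which the $h$-class argument applies symmetrically. Checking hypothesis (2) of Lemma~\ref{lem:small.brun} with $\omega(p)=h$ on the polynomial sequence $\prod_i(m+a_i)$ is routine.
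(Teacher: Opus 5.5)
Your proposal is correct and follows essentially the paper's proof. Both arguments take several elements of $\mathcal{A}$, remove by a union bound ($\ll x/y$) those $b$ with $p^2\mid b+a_i$ for some large $p$, and apply Brun's sieve with $\omega(p)=h$ to $\prod_i(m+a_i)$ over primes $p\equiv 3 \bmod 4$. They do this first with two elements to force both sets to be infinite, then with many elements, and transfer to the lower bound via Lemma~\ref{lem:productsize}. The only cosmetic difference is how infiniteness is obtained: the paper reads it off the $k=2$ lower bound $\log(x)^{1/6}$, whereas you contradict $|\mathcal{B}(x)|\gg x\log(x)^{-1/2}$, which amounts to the same thing.
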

 
\begin{proof}
Suppose  $\mathcal{A}  + \mathcal{B} = \mathcal{S}+\mathcal{S}$.  Choose some $k\geq 2$ and assume $|\mathcal{A}| \geq k$ and $|\mathcal{B}| \geq k$. Let $\{a_1, \dots, a_k\} $ be the set containing the $k$ smallest elements of $\mathcal{A}  $, and let $\{b_1, \dots, b_k\} $ be the set containing the $k$ smallest elements of $\mathcal{B}  $. 
 Fix $x$ and assume (without loss of generality) that $|\mathcal{A}(x)| \leq |\mathcal{B}(x)|$.

\medskip
Consider $p \equiv 3 \mbox{ mod } 4$. Since  $\{a_1, \dots, a_k\} \subseteq \mathcal{A} $, we have that for any $b \in \mathcal{B}(x)$,  $$( b\equiv -a_i \mbox{ mod } p ) \Rightarrow ( b \equiv -a_i \mbox{ mod } p^2) \quad  \mbox{ for all } i \in \{1,\dots,k\}.$$ Let $\mathcal{P} = \{p \in   [ \log(x)^k, x^{1/k}]: p \equiv 3 \mbox{ mod } 4 \}$. We then define $$W(x;k) = \{ b \in \mathcal{B}(x): \exists p \in \mathcal{P} \,\,\exists i \in \{1, \dots, k\} \mbox{ with } b \equiv -a_i (p)\}$$ and $U(x;k) = \mathcal{B}(x) \setminus W(x;k) $.

\medskip \underline{Step 1}: Bounding $|W(x;k)| $. For each $p \in \mathcal{P}$ and each $i \in \{1, \dots, k\}$, there exist only $O_k( \frac{x}{p^2})$ integers  $b \in \mathcal{B}(x) $ with $ b \equiv -a_i (p^2)$.  Hence 
$$|W(x;k)| \ll \sum_{p \in \mathcal{P}} \frac{kx}{p^2}\leq \sum_{p \geq \log(x)^k} \frac{kx}{p^2} \ll_k \frac{x}{\log(x)^k}.$$

\medskip \underline{Step 2}: Bounding $|U(x;k)| $. By construction, all elements $n$ of $U(x;k)$ satisfy $n \not\equiv -a_i (p)$ for all  $p \in \mathcal{P}$ and all $i \in \{1, \dots, k\}$. In other words, if $n \in U(x;k)$, then $ \prod_{i=1}^k(n+a_i) $ is coprime to $p$ for all $p \in \mathcal{P}$. In particular, for  $\mathcal{C} = \{\prod_{i=1}^k(n+a_i): n \in \mathbb{N} \cap [1,x] \}$, 
\begin{align*}
|U(x;k)| \leq \#\{c \in \mathcal{C}:\forall p \in \mathcal{P}\,\, (c,p)=1 \}.
\end{align*}
We now apply a small sieve (Lemma~\ref{lem:small.brun}) to $\mathcal{C}$. We note $X = |\mathcal{C}| \asymp  x$ and take $\omega(p) = k$. For $p \geq 2k$ we then get $\frac{\omega(p)}{p} \leq \frac{1}{2}$ and condition (1) of Lemma~\ref{lem:small.brun} holds.  
For distinct $p_1, \dots, p_j \in \mathcal{P}$, an integer $\prod_{i=1}^k(n+a_i)  $ is divisible by $p_1\dots p_j$ if and only if $n \equiv -\alpha$ mod $p_1\dots p_j$ for some $\alpha \in \mathbb{Z}/(p_1 \dots p_j)\mathbb{Z}$ with $$\alpha \in \{a_1, \dots, a_k\} \mbox{ mod } p_i \quad \mbox{ for all } i \in \{1, \dots, j\}.$$ Assuming $p_1, \dots,\ p_j \geq \max\{a_1, \dots, a_k\}- \min(\mathcal{A})$ (so that we get $k$ different residue classes mod $p_i$), there are $k^j$ possible choices for $\alpha$ mod $p_1\dots p_j$.  Thus 
 $$\left|\#\{c \in \mathcal{C}: p_1 \dots p_j \,\, \mbox{\rm divides } c\} - \frac{k^j}{p_1 \dots p_j} \#\mathcal{C}\right| \leq k^j$$
 and condition (2) of Lemma~\ref{lem:small.brun} holds for $\omega(p)=k$. Finally, we observe that for $2 \leq w \leq x^{1/k}$,
 $$\sum_{\substack{w \leq p <x^{1/k} \\ p \in \mathcal{P}}} \frac{\omega(p) \log(p)}{p} =\sum_{\substack{\max\{w,\log(x)^k\} \leq p <x^{1/k} \\ p \equiv 3(4)}} \frac{k\log(p)}{p} \leq k\log\left(\frac{x^{1/k}}{w}\right)  +O(1),$$
 and so condition (3) is   satisfied. Lemma~\ref{lem:small.brun}  applies if $\log(x)^k \geq \max\{2k,a_1- \min(\mathcal{A}), \dots, a_k- \min(\mathcal{A})\} $. So, assuming $x$ is sufficiently large (where the required size of $x$ depends on $\mathcal{A}$ and $k$), we then have
 \begin{align*}
|U(x;k)| &\leq \#\{c \in \mathcal{C}: \forall p \in \mathcal{P} \,\, (c,p)=1\} \\ &\ll_{k}  x\prod_{\substack{\log(x)^k \leq p <x^{1/k} \\ p \equiv 3(4)}} \left(1-\frac{k}{p} \right) \ll
  x\exp\Big(\sum_{\substack{\log(x)^k \leq p <x^{1/k} \\ p \equiv 3(4)}} \frac{1}{p} \Big)^{-k} \\ 
  &\ll  
  x\exp\Big(  \frac{ \log\log(x)- \log\log\log(x)}{2}+ O_k(1) \Big)^{-k} \ll_k  
  \frac{x}{ \log(x)^{k/3}}.
 \end{align*}
 Overall, we thus have $|\mathcal{B}(x)| \ll_{k,\mathcal{A}}  x \log(x)^{-k/3}$ and $|\mathcal{B}\cap (-\infty,x]| \ll_{k,\mathcal{A}}  x \log(x)^{-k/3}$. But by Lemma~\ref{lem:productsize} we have $|\mathcal{A}\cap (-\infty,x]| |\mathcal{B}\cap (-\infty,x]| \gg_{\mathcal{A},\mathcal{B}} x \log(x)^{-1/2}$ and so     $|\mathcal{A}(x)|  \gg_{k,\mathcal{A},\mathcal{B}}  \log(x)^{k/3-1/2}$.
 
 \medskip
\underline{Observation}: We have shown that if $|\mathcal{A}|, |\mathcal{B}| \geq k$  and $x \gg_{k,\mathcal{A},\mathcal{B}}   1$, then  $|\mathcal{A}(x)|, |\mathcal{B}(x)| \gg_{k,\mathcal{A},\mathcal{B}}  \log(x)^{k/3-1/2}$. 
 
 \medskip
\underline{Final Step}: In the statement of Lemma~\ref{lem:size.step1}, we assumed that $| \mathcal{A} | \geq 2$ and $| \mathcal{B} | \geq 2$. So we first apply the observation with $k=2$ to get $|\mathcal{A}(x)| \gg_{\mathcal{A},\mathcal{B}} \log(x)^{2/3-1/2}$ and $|\mathcal{B}(x)| \gg_{\mathcal{A},\mathcal{B}} \log(x)^{2/3-1/2}$ for sufficiently large $x$.  Then we apply the observation with $k=3c+9$ to get  $|\mathcal{A}(x)| \gg \log(x)^{c+2}$ and $|\mathcal{B}(x)| \gg  \log(x)^{c+2}$, where the implied constants depend on $\mathcal{A}$, $\mathcal{B}$ and $c$. Using  Lemma~\ref{lem:productsize}, which tells us that 
$|\mathcal{A}\cap (-\infty,x]| |\mathcal{B}\cap (-\infty,x]| \ll_{\mathcal{A},\mathcal{B}} x \log(x)^{2}$, this 
  also implies $|\mathcal{A}(x)|,|\mathcal{B}(x)| \ll_{\mathcal{A},\mathcal{B},c} x\log(x)^{-c}.$
\end{proof}

\subsubsection{Application of the Johnsen-Selberg sieve and Gallagher's larger sieve}

We now show that $\mathcal{A}(x)$ and $\mathcal{B}(x)$ have at least $x^{1/4-\varepsilon}$ elements. To do so, we use  Johnsen-Selberg   and Gallagher's larger sieve.

\begin{lemma}\label{lem:size.step2}
Suppose $\mathcal{A}\subseteq \mathbb{Z}$ and $\mathcal{B}  \subseteq \mathbb{Z}$ with  $\mathcal{A}  + \mathcal{B} = \mathcal{S}+\mathcal{S}$. Suppose  that $|\mathcal{A}|,|\mathcal{B}| \geq 2$. Let $\varepsilon >0$.

 Then  for sufficiently large $x$, 
\begin{align*}
x^{1/4-\varepsilon} \ll   |\mathcal{A}(x)|, |\mathcal{B}(x)| \ll x^{3/4+\varepsilon},
\end{align*}
where the implied constants  and the required size of $x$ depend on $ \mathcal{A}$, $ \mathcal{B} $ and $\varepsilon$.
\end{lemma}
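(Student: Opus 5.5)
By Lemma~\ref{lem:productsize}, $|\mathcal{A}(x)||\mathcal{B}(x)|$ lies between $x(\log x)^{-1/2}$ and $x(\log x)^2$ up to constants. So it suffices to prove the two lower bounds $|\mathcal{A}(x)|,|\mathcal{B}(x)|\gg x^{1/4-\varepsilon}$; each upper bound then follows from the other set's lower bound. The plan is to let each set sieve the other: a lower bound on one set gives many residue classes for it, which gives an upper bound on the other set, which through the product bound gives a lower bound on the first. I would run this as a bootstrap, starting from the polylogarithmic lower bound of Lemma~\ref{lem:size.step1}.

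\emph{Step 1: from $\mathcal{A}$ to residue restrictions on $\mathcal{B}$.} Fix a prime $p\equiv 3 \bmod 4$. The argument in Lemma~\ref{lem:basicstructure} shows that if $a\in\mathcal{A}(x)$ and $b\equiv -a \bmod p$, then $b\equiv -a \bmod p^2$. Hence $\mathcal{B}(x)$ misses every class $-a+jp \bmod p^2$ with $j\not\equiv 0 \bmod p$, and we may apply Lemma~\ref{lem:johnsen} to $\mathcal{B}(x)$ with $\Omega(p)=\emptyset$ and $\omega(p^2)=v_{\mathcal{A}}(p;x)(p-1)$. The weight attached to $p^2$ is then
\[
\frac{\omega(p^2)}{p^2-\omega(p^2)} \asymp \frac{v_{\mathcal{A}}(p;x)}{p} .
\]
For this to be useful we need $v_{\mathcal{A}}(p;x)\gg p/(\log x)^{C}$ for many $p$. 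I would get this from the counting identity behind Lemma~\ref{lem:largesieve.gallagher}. Counting pairs in $\mathcal{A}(x)^2$ congruent modulo $p$, weighted by $\log p$, gives
\[
\sum_{p\le Q}\log p\,\Big(\frac{|\mathcal{A}(x)|^2}{v_{\mathcal{A}}(p;x)}-|\mathcal{A}(x)|\Big)\le |\mathcal{A}(x)|^2\log x .
\]
Hence, if $Q\le |\mathcal{A}(x)|$ (so that $v\le p$ is not forced to be small), all but a negligible proportion of primes $p\in[Q/2,Q]$ with $p \equiv 3 \bmod 4$ satisfy $v_{\mathcal{A}}(p;x)\gg p/(\log x)^2$.

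\emph{Step 2: the Johnsen--Selberg sum and the bootstrap.} Let $y=|\mathcal{A}(x)|^{1-o(1)}$ and let $\mathcal{P}_y$ be the set of good primes $p\le y$ from Step~1. I would keep only those $d=m^2$ in Lemma~\ref{lem:johnsen} with $m\le x^{1/4}$ squarefree and composed of at most $K$ primes of $\mathcal{P}_y$ lying in $[y^{1/2},y]$. Each such $d$ contributes at least $(\log x)^{-2K}$. The number of such $m$ is about $x^{1/4}/\mathrm{poly}(\log x, K)$ once $y^{K}\approx x^{1/4}$, that is $K\approx \log x^{1/4}/\log y$. This gives
\[
|\mathcal{B}(x)|\ll x^{3/4}(\log x)^{2K+O(1)} ,
\]
and by Lemma~\ref{lem:productsize}, $|\mathcal{A}(x)|\gg x^{1/4}(\log x)^{-2K-3}$. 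As long as $y\ge x^{\eta}$ with $\eta\gg \varepsilon$ small but fixed, $K=O(1/\eta)$ and the loss is $(\log x)^{O(1/\eta)}\le x^{\varepsilon}$, which is what we want.

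The same argument with the roles of $\mathcal{A}$ and $\mathcal{B}$ swapped gives the bound for $\mathcal{B}$. The iteration is needed to reach $y\ge x^{\eta}$ in the first place. Starting from $|\mathcal{A}(x)|\ge(\log x)^{c}$ (Lemma~\ref{lem:size.step1}), one round gives $|\mathcal{A}(x)|\gg x^{1/4}/\#\{\text{obstruction}\}$, with the lower bound controlled by the count $\Psi(x^{1/4},y)$ of $y$-smooth squarefree numbers up to $x^{1/4}$ (with the weight loss). Since $\Psi(x^{1/4},y)$ grows much faster than $y$, the exponent of $|\mathcal{A}(x)|$ improves at each round, and after finitely many rounds (depending on $\varepsilon$) it reaches $1/4-\varepsilon$.

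\emph{Main obstacle.} The delicate point is the start of the bootstrap, where $y$ is only a power of $\log x$. There the number of $y$-smooth squarefree $m$ is tiny ($\le 2^{\pi(y)}$), and the weight loss $(\log x)^{-2K}$ could swamp the gain. I would first try to use Lemma~\ref{lem:size.step1} with a large $c$, so that $\Psi(x^{1/4},(\log x)^c)=x^{1/4\cdot(1-1/c+o(1))}$ beats the losses. If the $(\log x)^{-2}$ per-prime loss is too costly there, I would instead use the set $\{p: v_{\mathcal{A}}(p;x)\ge p/2\}$, which the collision identity also shows to have positive density among $p\le y/(\log x)^{2}$. On that set each Johnsen--Selberg factor is $\ge 1$, so there is no loss at all.
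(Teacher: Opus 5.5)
Your argument is correct, and in the form that actually works it is the paper's proof. The version you offer only as a ``first try'' in your last paragraph already finishes in one pass, so the bootstrap, which you say is needed, can be discarded.

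Here is the one-pass version. Take $y=(\log x)^c$; this is admissible in your Step~1 because Lemma~\ref{lem:size.step1} gives $|\mathcal{A}(x)|\gg(\log x)^{c+1}$. Put $K=\lfloor\log(x^{1/4})/\log y\rfloor\approx\log x/(4c\log\log x)$, and let $g\gg y/\log y$ be the number of good primes in $[y/2,y]$. Then there are $\binom{g}{K}\ge(g/K)^K\ge x^{(c-1)/(4c)-o(1)}$ squarefree products $m\le x^{1/4}$ of $K$ good primes. Each carries Johnsen--Selberg weight at least $x^{-1/(2c)-o(1)}$, coming from the $(\log x)^{-2}$ loss per prime. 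So the denominator is at least $x^{1/4-3/(4c)-o(1)}$, and taking $c\asymp 1/\varepsilon$ gives the lemma via Lemma~\ref{lem:productsize}.

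The paper does precisely this with primes up to $(\log x)^{2/\varepsilon}$. The differences from your route are minor:
\begin{itemize}
\item Instead of your collision count, it runs Gallagher's larger sieve in a case split. If at least half of these primes had $v_{\mathcal{A}}(p;x)\le p^{1-\varepsilon}$, then $|\mathcal{A}(x)|\ll(\log x)^{2/\varepsilon-2}$, contradicting Lemma~\ref{lem:size.step1}. Otherwise half of them carry weight $\ge\tfrac12 p^{-\varepsilon}\ge\tfrac12(\log x)^{-2}$.
\item It assumes $|\mathcal{A}(x)|\le|\mathcal{B}(x)|$, so sieving $\mathcal{B}(x)$ once yields both bounds, rather than sieving each set in turn.
\end{itemize}

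Your Step~2 with $y\ge x^{\eta}$ is then a valid but optional second pass; it would even replace the $x^{\varepsilon}$ loss by a power of $\log x$.

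Drop the fallback using $\{p: v_{\mathcal{A}}(p;x)\ge p/2\}$. The collision identity only bounds $\sum_{p\le Q}\log p/v_{\mathcal{A}}(p;x)$ by roughly $\log x$. That is consistent with $v_{\mathcal{A}}(p;x)\approx p/3$ for every $p\le x^{1/3}$, so it gives no positive density for that set.
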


\begin{proof}
Suppose $\mathcal{A}  + \mathcal{B} = \mathcal{S}+\mathcal{S}$ with $|\mathcal{A}|, |\mathcal{B}|\geq 2$. Fix $x$ and assume  $|\mathcal{A}(x)| \leq |\mathcal{B}(x)|$.  Let $\varepsilon \in (0,\frac{1}{4})$ and  
\begin{align*}
&\mathcal{P}_1(\varepsilon;x) = \{p \in  [1,\log(x)^{2/\varepsilon}]: p \equiv 3 \mbox{ mod } 4 \mbox{ and }  v_{\mathcal{A}}(p;x) \leq p^{1-\varepsilon}\}, \\ 
&\mathcal{P}_2(\varepsilon;x) = \{p \in  [1,\log(x)^{2/\varepsilon}]: p \equiv 3 \mbox{ mod } 4 \mbox{ and } v_{\mathcal{A}}(p;x) > p^{1-\varepsilon}\},  
\end{align*} 
where we  use the notation given in Definition~\ref{def:residueclasses}.
Recall from Lemma~\ref{lem:basicstructure} that $u_{\mathcal{B}}(p;x)+v_{\mathcal{A}}(p;x) \leq p$ for $p \equiv 3 \mbox{ mod } 4$,  and observe that $\mathcal{P}_2(\varepsilon;x) \subseteq \{ p \in  [1,\log(x)^{2/\varepsilon}]: u_{\mathcal{B}}(p;x) \leq p- p^{1-\varepsilon}\}$.  

\medskip

\underline{Case 1}: $|\mathcal{P}_1(\varepsilon;x)| \geq |\mathcal{P}_2(\varepsilon;x)|$. We apply Gallagher's larger sieve (Lemma~\ref{lem:largesieve.gallagher}) with $\mathcal{P}=\mathcal{P}_1(\varepsilon;x)$ and get
\begin{align*}
    |\mathcal{A}(x)| &\leq \frac{-\log(x) + \sum_{p \in \mathcal{P}_1(\varepsilon;x)} \log(p)}{-\log(x) + \sum_{p \in \mathcal{P}_1(\varepsilon;x)} \frac{\log(p) }{v_{\mathcal{A}}(p;x)}}
    \\ &\leq \frac{-\log(x) + \sum_{p \in \mathcal{P}_1(\varepsilon;x)} \log(p)}{-\log(x) + \sum_{p \in \mathcal{P}_1(\varepsilon;x)} \frac{\log(p) }{p^{1-\varepsilon}}} \\
    &\leq \frac{-\log(x) + |\mathcal{P}_1(\varepsilon;x)| \log(\log(x)^{2/\varepsilon})}{-\log(x) + |\mathcal{P}_1(\varepsilon;x)| \frac{\log(\log(x)^{2/\varepsilon}) }{\log(x)^{2/\varepsilon-2}}}  \\
    &\ll_\varepsilon  \frac{-\log(x) + \log(x)^{2/\varepsilon}}{-\log(x) + (1/8)\log(x)^{2}}.
\end{align*}
Hence $|\mathcal{A}(x)| \ll_\varepsilon  \log(x)^{2/\varepsilon-2}$. However, by Lemma~\ref{lem:size.step1} (applied with $c =  \lceil 2/\varepsilon -1 \rceil $), we also have $|\mathcal{A}(x)| \gg_{\varepsilon, \mathcal{A},\mathcal{B}} \log(x)^{2/\varepsilon-1} $. For large $x$,  this gives a contradiction and Case 1 cannot occur. 

\medskip

\underline{Case 2}: $|\mathcal{P}_1(\varepsilon;x)| < |\mathcal{P}_2(\varepsilon;x)|$.  Recall that $p \in \mathcal{P}_2(\varepsilon;x)$ implies $ u_{\mathcal{B}}(p;x) \leq p- p^{1-\varepsilon}$  and hence 
\begin{align*}
v_\mathcal{B}(p^2;x)&:= |\{ (b \mbox{ mod } p^2) \in \mathbb{Z}/p^2\mathbb{Z}: b \in \mathcal{B}(x)\}|\leq pu_{\mathcal{B}}(p;x)+ w_{\mathcal{B} }(p;x)  \\
&\leq (p-1)u_{\mathcal{B} }(p;x)+ p\leq p^2- p^{2-\varepsilon}  + p^{1-\varepsilon} \leq p^2 - \frac{1}{2}p^{2-\varepsilon}.
\end{align*}
We apply  the Johnsen-Selberg sieve (Lemma~\ref{lem:johnsen}) to $\mathcal{B}(x)$. For $p \not\in \mathcal{P}_2(\varepsilon;x)$ we simply take $\Omega(p)=\Omega(p^2)=\emptyset$. For $p  \in \mathcal{P}_2(\varepsilon;x)$ we still take $\Omega(p)=\emptyset$, but   $\Omega(p^2)= (\mathbb{Z}/p^2\mathbb{Z}) \setminus \{ (b \mbox{ mod } p^2) \in \mathbb{Z}/p^2\mathbb{Z}: b \in \mathcal{B}(x)\}$, so that $\omega(p)=0$ and  $\omega(p^2)=p^2- v_\mathcal{B}(p^2;x)$.
We set $ m(x) =\left\lfloor \frac{\log(x^{1/4})}{\log(\log(x)^{2/\varepsilon})} \right\rfloor$  and find
\begin{align*}
    |\mathcal{B}(x)| &\leq 2x \Bigg(\sum_{\substack{q \leq x^{1/4} \\ p \mid q \Rightarrow p \in \mathcal{P}_2(\varepsilon;x)}} \mu(q)^2 \prod_{p \mid q}\left(\frac{p^2-v_\mathcal{B}(p^2;x)}{v_\mathcal{B}(p^2;x)} \right) \Bigg)^{-1} \\
    &\leq 2x \Bigg(\sum_{\substack{q \leq x^{1/4} \\ p \mid q \Rightarrow p \in \mathcal{P}_2(\varepsilon;x)}} \mu(q)^2 \prod_{\substack{p \mid q }}\left(\frac{1}{1 - \frac{1}{2}p^{-\varepsilon}}-1\right) \Bigg)^{-1} \\
    &\leq 2x \Bigg(\sum_{\substack{q \leq x^{1/4} \\ p \mid q \Rightarrow p \in \mathcal{P}_2(\varepsilon;x)}} \mu(q)^2 \prod_{p \mid q} \sum_{k=1}^\infty \left(\frac{1}{2p^{\varepsilon}}\right)^k \Bigg)^{-1} \\
     &\leq 2x \Bigg(\frac{1}{m(x)!}\Bigg(\sum_{\substack{p \leq x^{1/(4m(x))} \\ p \in \mathcal{P}_2(\varepsilon;x)}}  \frac{1}{2p^{\varepsilon}}\Bigg)^{m(x)} \Bigg)^{-1} \\
     &\leq 2x \Bigg(\frac{1}{m(x)}\sum_{\substack{p \in \mathcal{P}_2(\varepsilon;x)}}  \frac{1}{2p^{\varepsilon}}\Bigg)^{-m(x)}  
    \\
    & \ll_\varepsilon 2x \Bigg(  \frac{\log(x)^{2/\varepsilon}}{8m(x)\log(\log(x)^{2/\varepsilon})\log(x)^2 }\Bigg)^{-m(x)}  \\
     &\ll_\varepsilon 2x \Bigg( \exp\left(-\frac{(1-\varepsilon)\log( \log(x)^{2/\varepsilon - 3 -\varepsilon})\log(x^{1/4})}{\log(\log(x)^{2/\varepsilon})}\right)\Bigg) \ll_\varepsilon x^{3/4+\varepsilon}.
\end{align*} 
Thus for large $x$, 
we have $|\mathcal{A}(x)|, |\mathcal{B}(x)| \ll_\varepsilon x^{3/4+\varepsilon}$.  By Lemma~\ref{lem:productsize}, this also implies the lower bound $|\mathcal{A}(x)|, |\mathcal{B}(x)| \gg_{\varepsilon,\mathcal{A},\mathcal{B}} x^{1/4-\varepsilon} \log(x)^{-1/2}$.
\end{proof}

\subsubsection{Application of Theorem~\ref{thm:largesieveimprovement}}

The Johnsen-Selberg sieve gives us no better lower and upper bounds on $|\mathcal{A}(x)|$ and  $|\mathcal{B}(x)|$ than $x^{1/4}$ and $x^{3/4}$. To go beyond these bounds, we need our new sieve estimate, Theorem~\ref{thm:largesieveimprovement}. We first  
  describe how Theorem~\ref{thm:largesieveimprovement} can be combined with Gallagher's sieve to iteratively prove new bounds. We then apply this result to deduce Theorem~\ref{thm:2squares}.

\begin{lemma}\label{lem:size.step3} 
Suppose $L:[1,\infty) \rightarrow \mathbb{R}$ and $U:[1,\infty) \rightarrow \mathbb{R}$   with $x^{1/10} \leq L(x) \leq  x^{1/2} \leq U(x)$ for all $x$. 

Suppose further that $\mathcal{A}\subseteq \mathbb{Z}$ and $\mathcal{B}  \subseteq \mathbb{Z}$ have the property that $\mathcal{A}+\mathcal{B}=\mathcal{S}+\mathcal{S}$ and for   $x \gg_{\mathcal{A},\mathcal{B}} 1$, 
\begin{align*}
L(x) \ll_{\mathcal{A},\mathcal{B}}   |\mathcal{A}(x)|, |\mathcal{B}(x)| \ll_{\mathcal{A},\mathcal{B}} U(x).
\end{align*}
Let $k \in \{1, \dots, 5\}$ with $L(x)^k \leq x^{1/2}$ for all $x$. Then for all sufficiently large $x$,
\begin{align*}
\frac{L(x)^k}{\log(L(x))^{2k+1/2}} \ll_{\mathcal{A},\mathcal{B}}   |\mathcal{A}(x)|, |\mathcal{B}(x)| \ll_{\mathcal{A} ,\mathcal{B}} \frac{x \log(L(x))^{2k}}{  L(x)^k} .
\end{align*}
\end{lemma}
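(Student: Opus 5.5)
We will mimic the case split from the proof of Lemma~\ref{lem:size.step2}, but with the scale $M$ tied to $L(x)$ rather than to a power of $\log x$, and with Theorem~\ref{thm:largesieveimprovement} (in the form of Corollary~\ref{cor:john-selb-improvement}) replacing Johnsen--Selberg. Fix $x$ large and, by symmetry, prove the upper bound for $|\mathcal{B}(x)|$ and the lower bound for $|\mathcal{A}(x)|$. Set $M=L(x)$. Since $L(x)^k\le x^{1/2}$, the condition $M^k\le N^{1/2}$ with $N=x$ holds. Let $\mathcal{P}_0=\{p\in[\frac12 M,M]: p\equiv 3 \bmod 4\}$, so $|\mathcal{P}_0|\gg M/\log M$. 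We split $\mathcal{P}_0$ according to $v_{\mathcal{A}}(p;x)$:
\[
\mathcal{P}_1=\{p\in\mathcal{P}_0: v_{\mathcal{A}}(p;x)\le p/4\},\qquad
\mathcal{P}_2=\{p\in\mathcal{P}_0: v_{\mathcal{A}}(p;x)>p/4\}.
\]

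\emph{Case $|\mathcal{P}_1|\ge|\mathcal{P}_0|/2$.} Here we apply Gallagher's larger sieve (Lemma~\ref{lem:largesieve.gallagher}) to $\mathcal{A}(x)$ with $\mathcal{P}=\mathcal{P}_1$. We have $\sum_{p\in\mathcal{P}_1}\log p\asymp M$, while
\[
\sum_{p\in\mathcal{P}_1}\frac{\log p}{v_{\mathcal{A}}(p;x)}\ \ge\ \frac{4|\mathcal{P}_1|\log(M/2)}{M}\ \gg\ 1.
\]
This is not enough to beat $\log x$ in the denominator. So we use a stronger threshold: let $\mathcal{P}_1=\{p: v_{\mathcal{A}}(p;x)\le p/(C\log x)\}$. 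Then the denominator is $\ge (C'-1)\log x>0$ for suitable $C$, and Gallagher gives $|\mathcal{A}(x)|\ll M\log M\cdot(\log x)/\log x\ll L(x)\log x$. This is far below the hypothesised lower bound only when compared to something larger. A direct contradiction needs $|\mathcal{A}(x)|\gg L(x)\cdot(\text{large})$, which we do not have. The way out is to apply Gallagher at a \emph{smaller} scale: take $M'$ with $M'\log M'\asymp L(x)^{1-\eta}$, or more simply, choose the primes in $\mathcal{P}_1$ from $[\frac12 M,M]$ with $M=L(x)/(\log L(x))^{A}$. Then Gallagher yields $|\mathcal{A}(x)|\ll L(x)(\log)^{1-A}$, contradicting $|\mathcal{A}(x)|\gg L(x)$ for $A\ge 2$. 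We therefore set the scale $M=L(x)(\log L(x))^{-2}$ throughout. This is exactly where the $\log(L(x))^{2k}$ losses come from, since then $M^k=L(x)^k/\log(L(x))^{2k}$.

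\emph{Case $|\mathcal{P}_2|\ge|\mathcal{P}_0|/2$.} Now $v_{\mathcal{A}}(p;x)>p/(C\log x)$ on a positive proportion of primes. It is cleaner to take $v_{\mathcal{A}}>c p$, and to put the primes with $v_{\mathcal{A}}\in(p/(C\log x),cp]$ into the Gallagher case, which still works since the denominator there is $\gg |\mathcal{P}_1|\log M/(cM)$. By Lemma~\ref{lem:basicstructure}, $u_{\mathcal{B}}(p;x)\le p-v_{\mathcal{A}}(p;x)\le(1-c)p$. Take
\[
\Omega(p)=W_{\mathcal{B}}(p;x)\cup\bigl((\mathbb{Z}/p\mathbb{Z})\setminus V_{\mathcal{B}}(p;x)\bigr),
\]
so that $\omega(p)=p-u_{\mathcal{B}}(p;x)\ge cp$. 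Every class $c\in\Omega(p)$ meets $\mathcal{B}(x)$ in at most $\Theta=1$ class mod $p^2$. If necessary, we discard the primes with $\omega(p)>(1-c)p$ or pass to a subset so that $\omega(p)\in[c_2p,c_3p]$ with $c_3<1$. When $\omega(p)$ is close to $p$, $\mathcal{B}$ misses almost all classes; in that situation we instead apply Montgomery's large sieve directly, or note that the bound only improves. Corollary~\ref{cor:john-selb-improvement} then gives
\[
|\mathcal{B}(x)|\ll \frac{x(\log M)^k}{M^k}\ll \frac{x\log(L(x))^{2k}}{L(x)^k},
\]
with constants depending only on $k\le 5$. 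Lemma~\ref{lem:productsize}, which gives $|\mathcal{A}(x)||\mathcal{B}(x)|\gg x(\log x)^{-1/2}$, then converts this into $|\mathcal{A}(x)|\gg L(x)^k/\log(L(x))^{2k+1/2}$, using $\log x\asymp\log L(x)$ since $L(x)\ge x^{1/10}$. Running the same argument with the roles of $\mathcal{A}$ and $\mathcal{B}$ swapped gives the remaining two bounds.

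\textbf{Main obstacle.} The delicate step is calibrating the threshold and the scale $M$ so that the Gallagher case genuinely contradicts the \emph{assumed} lower bound $L(x)$, while the large-sieve case keeps $\omega(p)/p$ bounded away from $0$ and $1$ as Corollary~\ref{cor:john-selb-improvement} requires. In particular, we must check that the primes where $v_{\mathcal{A}}$ is small but not tiny can be absorbed by Gallagher with only logarithmic loss.
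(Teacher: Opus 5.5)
Your overall architecture matches the paper's, but there is a genuine gap at exactly the step you flagged as the main obstacle, and two things go wrong there.

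First, your rejection of the threshold $v_{\mathcal{A}}(p;x)\le p/(C\log x)$ at scale $M=L(x)$ rests on an arithmetic slip. The numerator in Gallagher's bound (Lemma~\ref{lem:largesieve.gallagher}) is at most $\sum_{p\le M}\log p\ll M$, not $M\log M$. The denominator is at least $-\log x+C|\mathcal{P}_1|\log x\,\log(M/2)/M\gg\log x$ once $|\mathcal{P}_1|\gg M/\log M$ and $C$ is large. So Gallagher gives $|\mathcal{A}(x)|\ll M/\log x\ll L(x)/\log L(x)$, which already contradicts $|\mathcal{A}(x)|\gg L(x)$. This is precisely the paper's Case~1, with threshold $p/(100\log p)$, which is equivalent because $\log p\asymp\log x$. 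No rescaling of $M$ is needed, and rescaling is not where the factor $\log(L(x))^{2k}$ comes from. Indeed, with $M=L(x)/(\log L(x))^2$ your final display would read $x(\log M)^k/M^k\asymp x(\log L(x))^{3k}/L(x)^k$, not what you wrote.

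Second, and fatally, the primes with $p/(C\log x)<v_{\mathcal{A}}(p;x)\le cp$ cannot be moved into the Gallagher case. On a dyadic block, the only guaranteed lower bound for $\sum_{p\in\mathcal{P}_1}\log p/v_{\mathcal{A}}(p;x)$ is $|\mathcal{P}_1|\log(M/2)/(cM)=O(1/c)$. That cannot beat the $-\log x$ in the denominator; it is the very failure you noticed yourself for $c=1/4$. Hence in the complementary case all you know is $\omega(p)=p-u_{\mathcal{B}}(p;x)\ge v_{\mathcal{A}}(p;x)>p/(C\log x)$. So $\omega(p)/p$ is not bounded below, and Corollary~\ref{cor:john-selb-improvement} does not apply.

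The missing idea is to use Theorem~\ref{thm:largesieveimprovement} itself. Take $\Omega(p)$ to be a subset of $W_{\mathcal{B}}(p;x)\cup((\mathbb{Z}/p\mathbb{Z})\setminus V_{\mathcal{B}}(p;x))$ of size $\omega(p)\asymp p/\log p$. Shrinking $\Omega(p)$ preserves the $\Theta=1$ hypothesis, so no separate Montgomery argument is needed when $\omega(p)$ is close to $p$. Then check that the constants survive this regime:
\begin{itemize}
\item $\omega(p)/(p-\omega(p))\asymp 1/\log L$, $|\mathcal{P}|\asymp L/\log L$, and $S=\sum_{p\in\mathcal{P}}\omega(p)/(p-\omega(p))\asymp L/(\log L)^2$;
\item consequently $c^*\to 1/3$;
\item $d^*\asymp 1$, because $k\max_p (p-\omega(p))/\omega(p)\ll\log L=o(S)$;
\item the maximum in $\xi$ is attained by $(d^*/32)^k$, since $|\mathcal{P}|/M$ and $S/M$ tend to $0$, so $\xi\asymp 1$.
\end{itemize}
The $k$-fold sum is then $\gg (L/(\log L)^2)^k$. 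It is these weights $\omega(p)/(p-\omega(p))\asymp 1/\log L$, not a change of scale, that produce $\log(L(x))^{2k}$.

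The remaining ingredients of your outline are fine: Lemma~\ref{lem:basicstructure}, the choice $\Theta=1$, and Lemma~\ref{lem:productsize} for the lower bound.
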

Lemma~\ref{lem:size.step3} allows us to take some lower   bound  $L(x)$ and use it to construct new, hopefully better bounds.

\begin{proof}
Suppose $\mathcal{A}  + \mathcal{B} = \mathcal{S}+\mathcal{S}$ and for  large $x$, 
\begin{align*}
L(x) \ll_{\mathcal{A},\mathcal{B}}   |\mathcal{A}(x)|, |\mathcal{B}(x)| \ll_{\mathcal{A},\mathcal{B}} U(x).
\end{align*}
Fix $x$ and assume that $|\mathcal{A}(x)| \leq |\mathcal{B}(x)|$.  Then define 
\begin{align*}
&\mathcal{P}_1(x) = \left\{p \in  [\tfrac{1}{2}L(x),L(x)]: p \equiv 3 \mbox{ mod } 4 \mbox{ and }  v_{\mathcal{A}}(p;x) \leq \tfrac{p}{100 \log(p)}\right\}, \\ 
&\mathcal{P}_2(x) = \left\{p \in  [\tfrac{1}{2}L(x),L(x)]: p \equiv 3 \mbox{ mod } 4 \mbox{ and } v_{\mathcal{A}}(p;x) > \tfrac{p}{100 \log(p)}\right\},  
\end{align*} 
where we  use the notation given in Definition~\ref{def:residueclasses}.
Recall from Lemma~\ref{lem:basicstructure} that $u_{\mathcal{B}}(p;x)+v_{\mathcal{A}}(p;x) \leq p$ for $p \equiv 3 \mbox{ mod } 4$,  and observe that $\mathcal{P}_2(x) \subseteq \{ p \in [\tfrac{1}{2}L(x),L(x)]: u_{\mathcal{B}}(p;x) \leq p- \tfrac{p}{100 \log(p)}\}$.

\medskip

\underline{Case 1}: $|\mathcal{P}_1(x)| \geq |\mathcal{P}_2(x)|$. We apply Gallagher's larger sieve (Lemma~\ref{lem:largesieve.gallagher}) with $\mathcal{P}=\mathcal{P}_1(x)$ and get
\begin{align*}
     |\mathcal{A}(x)| &\leq \frac{-\log(x) + \sum_{p \in \mathcal{P}_1(x)} \log(p)}{-\log(x) + \sum_{p \in \mathcal{P}_1(x)} \frac{\log(p) }{v_{\mathcal{A}}(p;x)}}\\
     &\leq \frac{-\log(x) + \sum_{p \in \mathcal{P}_1(x)} \log(p)}{-\log(x) + \sum_{p \in \mathcal{P}_1(x)} \frac{100\log(p)^2 }{p}} \\
    &\leq \frac{-\log(x) + |\mathcal{P}_1(x)| \log(L(x))}{-\log(x) + |\mathcal{P}_1(x)|   \frac{100 \log(L(x))^2}{L(x)}}\\ 
    &\ll  \frac{-\log(x) + L(x)}{-\log(x) +   \frac{100}{9} \log(L(x)) }.
\end{align*}
Hence $|\mathcal{A}(x)| \ll   L(x)/\log(L(x))$. But recall our assumption $|\mathcal{A}(x)| \gg_{\mathcal{A},\mathcal{B}} L(x)$. For large $x$ (where the required size of $x$ depends on $\mathcal{A}$ and $\mathcal{B}$), this gives a contradiction.
 
\medskip

\underline{Case 2}: $|\mathcal{P}_1(x)| < |\mathcal{P}_2(x)|$. 
 Recall that $p \in \mathcal{P}_2(x)$ implies $ u_{\mathcal{B}}(p;x) \leq p- \frac{p}{100\log(p)}$  and hence 
\begin{align} \label{inequ:step3iteration}
w_{\mathcal{B}}(p;x) + (p-v_{\mathcal{B}}(p;x)) = p- u_{\mathcal{B}}(p;x)  \geq \tfrac{p}{100\log(p)}.
\end{align}
We will apply  Theorem~\ref{thm:largesieveimprovement} to $\mathcal{B}(x)$. We make the following choices: We let $k \in \{1,\dots,5\}$ with $L(x)^k \leq x^{1/2}$ and take $M=L(x)$.  We   take $\mathcal{P} = \mathcal{P}_2(x) $. Inequality (\ref{inequ:step3iteration}) tells us that for every $p \in \mathcal{P}_2(x)$ we can choose some  $\Omega(p) \subseteq W_{\mathcal{B}}(p;x) \cup ((\mathbb{Z}/p\mathbb{Z}) \setminus V_{\mathcal{B}}(p;x))$ with  $ \omega(p)= |\Omega(p)|\asymp \tfrac{p}{100\log(p)}$. We also set $\Theta=1$.  Then for every $p \in \mathcal{P}$ and $c\in \Omega(p)$, 
$$|\{(b \mbox{ mod } p^2) \in \mathbb{Z}/p^2\mathbb{Z}: b \in \mathcal{B}(x), b \equiv c \mbox{ mod } p\}| \leq 1=\Theta.$$
Thus Theorem~\ref{thm:largesieveimprovement} is applicable  provided that   $c^*(k,\omega,\mathcal{P})$, $d^*(k,\omega,\mathcal{P})$, $\xi(k,\omega,\mathcal{P},\Theta)  >0$. In that case,
\begin{align}\label{inequ:applyingthm3}
&|\mathcal{B}(x)| \ll \xi(k,\omega,\mathcal{P},\Theta) \, x\left(  \mathop{\sum \dots \sum}_{\substack{ p_1, \dots, p_k \in \mathcal{P} \\ p_1> \dots > p_k }}  \prod_{i=1}^k \left( \frac{\omega(p_i)}{p_i-\omega(p_i)}\right) \right)^{-1}.
\end{align}    
 
We now compute simple estimates for $c^*(k,\omega,\mathcal{P})$, $d^*(k,\omega,\mathcal{P})$ and $\xi(k,\omega,\mathcal{P},\Theta)$. First we observe  that $\omega(p) \asymp \tfrac{p}{100\log(p)}$ for $p \in \mathcal{P} \subseteq [\tfrac{1}{2}L(x),L(x)]$ implies 
\begin{align*}
\frac{1 }{  \log(L(x)) }   \ll \left(\frac{\omega(p)}{p-\omega(p)} \right) \ll  \frac{1 }{  \log(L(x)) }   \quad \mbox{ and } \quad
\sum_{p \in \mathcal{P}} \left(\frac{\omega(p)}{p-\omega(p)} \right) 
\gg \frac{L(x)}{\log(L(x))^2}.
\end{align*} 
Note also that $|\mathcal{P}| \gg L(x)/\log(L(x))$. Recalling Definition~\ref{def:sievefactors}, we therefore have for $k \in \{1, \dots, 5\}$
\begin{align*}
&c^*(k,\omega,\mathcal{P})=\left( \sum_{p \in \mathcal{P}} \left(\frac{\omega(p)}{p-\omega(p)} \right) \right)^{-1} \left\{\frac{1}{3} \sum_{p \in \mathcal{P}} \left(\frac{\omega(p)}{p-\omega(p)} \right) - k \max_{p \in \mathcal{P}}  \left(\frac{\omega(p)}{p-\omega(p)} \right) \right\}\asymp 1, 
 \\
&d^*(k,\omega,\mathcal{P})=\left( \sum_{p \in \mathcal{P}} \left(\frac{\omega(p)}{p-\omega(p)} \right) \right)^{-1} \left\{\frac{c^*(k,\omega,\mathcal{P})^k}{   2^{k+5} k} \sum_{p \in \mathcal{P}} \left(\frac{\omega(p)}{p-\omega(p)} \right) - k  \max_{p \in \mathcal{P}}  \left(\frac{p-\omega(p)}{\omega(p)} \right) \right\}  \asymp 1,\\
&\xi(k,\omega,\mathcal{P},\Theta)=64\left(\frac{ 32 \Theta }{c^*(k,\omega,\mathcal{P})  d^*(k,\omega,\mathcal{P}) } \right)^k  \left(\frac{   d^*(k,\omega,\mathcal{P}) }{ 32 \Theta }  \right)^k \asymp 1.
\end{align*}
In short, since $L(x) \geq x^{1/10}$, we find upon careful inspection  that    $c^*(k,\omega,\mathcal{P})$, $d^*(k,\omega,\mathcal{P})$ and $\xi(k,\omega,\mathcal{P},\Theta)$ can be bounded below and above by  positive constants independent of $x$, provided $x$ is sufficiently large. Therefore (\ref{inequ:applyingthm3}) can be simplified to 
\begin{align*} 
&|\mathcal{B}(x)| \ll    x\left(  \mathop{\sum \dots \sum}_{\substack{ p_1, \dots, p_k \in \mathcal{P} \\ p_1> \dots > p_k }}   \left( \frac{1}{\log(L(x))}\right)^k \right)^{-1} \ll \frac{x \log(L(x))^{2k}}{  L(x)^k} .
\end{align*}  

Thus for large $x$, 
we have $|\mathcal{A}(x)|, |\mathcal{B}(x)| \ll_{\mathcal{A} ,\mathcal{B}} \frac{x \log(L(x))^{2k}}{  L(x)^k} $.  By Lemma~\ref{lem:productsize}, this also implies the lower bound $|\mathcal{A}(x)|, |\mathcal{B}(x)| \gg_{ \mathcal{A},\mathcal{B}} \left( \frac{x}{  \log(x)^{1/2}}\right) \left( \frac{  L(x)^k}{x \log(L(x))^{2k}} \right) = \frac{L(x)^k}{\log(L(x))^{2k+1/2}}$, which is what we wanted to show.
\end{proof}

To complete the proof of Theorem~\ref{thm:2squares}, we now just need to  apply Lemma~\ref{lem:size.step3} twice. 

\begin{proof}[Proof of Theorem~\ref{thm:2squares}]
Suppose $\mathcal{A}  + \mathcal{B} = \mathcal{S}+\mathcal{S}$  and $|\mathcal{A} |, |\mathcal{B} | \geq 2$.  By Lemma~\ref{lem:size.step2}, we already know 
\begin{align*}
x^{1/8} \ll_{\mathcal{A},\mathcal{B}}   |\mathcal{A}(x)|, |\mathcal{B}(x)| \ll_{\mathcal{A},\mathcal{B}} x^{7/8}.
\end{align*}
We first apply Lemma~\ref{lem:size.step3} with $L(x) = x^{1/8}$ and $k=3$. For $x \gg_{\mathcal{A},\mathcal{B}} 1$ we get 
\begin{align*}
\frac{x^{3/8}}{\log(x)^{6+1/2}} \ll_{\mathcal{A},\mathcal{B}}   |\mathcal{A}(x)|, |\mathcal{B}(x)| \ll_{\mathcal{A} ,\mathcal{B}} \frac{x \log(x)^{6}}{x^{3/8}} .
\end{align*}
We then apply Lemma~\ref{lem:size.step3} with $L(x) = x^{1/4}$ and $k=2$. For $x \gg_{\mathcal{A},\mathcal{B}} 1$ we get 
\begin{align*}
\frac{x^{1/2}}{\log(x)^{4+1/2}} \ll_{\mathcal{A},\mathcal{B}}   |\mathcal{A}(x)|, |\mathcal{B}(x)| \ll_{\mathcal{A} ,\mathcal{B}} \frac{x \log(x)^{4}}{ x^{1/2}} . 
\end{align*}
This is what we wanted to show.
\end{proof}

\subsection{Decomposition into three sets} A nice corollary of Theorem~\ref{thm:2squares} is that $\mathcal{S}+\mathcal{S}$ cannot be written as a sum of three different sets in a non-trivial way.

\begin{cor}
Suppose $\mathcal{A}_1, \mathcal{A}_2, \mathcal{A}_3 \subseteq \mathbb{Z}$ with $\mathcal{A}_1+\mathcal{A}_2+\mathcal{A}_3=\mathcal{S}+\mathcal{S}$. Then  $|\mathcal{A}_i| =1$ for some $i \in \{1,2,3\}$. 
\end{cor}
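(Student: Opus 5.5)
Assume for contradiction that $|\mathcal{A}_1|,|\mathcal{A}_2|,|\mathcal{A}_3|\geq 2$. The plan is to apply Theorem~\ref{thm:2squares} to two different groupings of the three sets, and then play the two resulting size estimates against each other. The lower bound for the sumset will come from the grouped decomposition, and the upper bound from counting.

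First group the sets as $\mathcal{B}_1=\mathcal{A}_1$ and $\mathcal{B}_2=\mathcal{A}_2+\mathcal{A}_3$. Then $\mathcal{B}_1+\mathcal{B}_2=\mathcal{S}+\mathcal{S}$, with $|\mathcal{B}_1|\geq 2$ and $|\mathcal{B}_2|\geq|\mathcal{A}_2|\geq 2$. Theorem~\ref{thm:2squares} therefore gives $|(\mathcal{A}_2+\mathcal{A}_3)\cap[1,x]|\gg \sqrt{x}/(\log x)^{9/2}$. By symmetry it also gives $|\mathcal{A}_i\cap[1,x]|\ll\sqrt{x}(\log x)^4$ for each $i$. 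Applying the theorem instead to the grouping $\mathcal{A}_2$ and $\mathcal{A}_1+\mathcal{A}_3$ gives $|\mathcal{A}_2\cap[1,x]|\gg\sqrt{x}/(\log x)^{9/2}$, and likewise for $\mathcal{A}_3$.

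Next I need an upper bound on $|(\mathcal{A}_2+\mathcal{A}_3)\cap[1,x]|$. I would avoid the trivial product bound $|\mathcal{A}_2(x)||\mathcal{A}_3(x)|\approx x$, which is useless here. Instead I would use the upper bound from Theorem~\ref{thm:2squares} applied to $\mathcal{B}_2=\mathcal{A}_2+\mathcal{A}_3$ itself, which is a legitimate summand in the first grouping. That gives $|\mathcal{B}_2\cap[1,x]|\ll\sqrt{x}(\log x)^4$.

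Now fix one element $a\in\mathcal{A}_3$. Every element of $\mathcal{A}_2$ up to $x-|a|$ yields a distinct element $b+a$ of $\mathcal{B}_2$ in $[1,x+O(1)]$. This gives only $|\mathcal{B}_2(x)|\gg\sqrt{x}/(\log x)^{9/2}$, which is no contradiction. So a single shift is not enough, and I need many translates: $\mathcal{B}_2\supseteq\mathcal{A}_2+\mathcal{A}_3$ contains the union of the translates $\mathcal{A}_2+a$ over $a\in\mathcal{A}_3(y)$.

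The main obstacle is to show this union is large, roughly of size $\min(x,\ |\mathcal{A}_2(x)||\mathcal{A}_3(y)|)$ up to overlaps. I would handle the overlaps through additive energy. Apply Cauchy--Schwarz to get
\[|\mathcal{A}_2(x)+\mathcal{A}_3(x)|\geq |\mathcal{A}_2(x)|^2|\mathcal{A}_3(x)|^2/E,\]
where $E$ counts solutions of $a_2+a_3=a_2'+a_3'$. Then bound $E$ by showing that $r(n)=\#\{(a_2,a_3):a_2+a_3=n\}$ is small. Each such $n$ lies in $\mathcal{B}_2$, and for fixed $a_1\in\mathcal{A}_1$ we have $a_1+n\in\mathcal{S}+\mathcal{S}$.

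Alternatively, and perhaps more simply, I would use the residue structure. For a prime $p\equiv 3 \bmod 4$, the argument of Lemma~\ref{lem:basicstructure} forces $\mathcal{B}_2$ to have at most one class mod $p^2$ above each class $-a_1 \bmod p$. If a translate $\mathcal{A}_2+a_3$ with two different $a_3$ values already produces two classes mod $p^2$ over a common class mod $p$, this forces many residue classes $-\mathcal{A}_1 \bmod p$ to be missing. Gallagher's larger sieve (Lemma~\ref{lem:largesieve.gallagher}) would then bound $|\mathcal{A}_1(x)|$ by $x^{o(1)}$, contradicting the lower bound $\gg\sqrt{x}/(\log x)^{9/2}$. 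I would pursue the energy route first, and fall back on this residue argument if the energy bound fails, aiming to show $|\mathcal{B}_2(x)|\gg x^{1/2+c}$ for some $c>0$. That contradicts the upper bound and finishes the proof.
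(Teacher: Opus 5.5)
Your proposal has a genuine gap. Everything before the last step is fine: you correctly get $|\mathcal{A}_i(x)|\gg\sqrt{x}/(\log x)^{9/2}$ and $|\mathcal{A}_2(x)+\mathcal{A}_3(x)|\le|(\mathcal{A}_2+\mathcal{A}_3)(2x)|\ll\sqrt{x}(\log x)^4$. But the entire contradiction rests on the unproved claim $|\mathcal{B}_2(x)|\gg x^{1/2+c}$, and neither of your routes delivers it.

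The energy route is circular. From the bounds you already have, Cauchy--Schwarz gives $E(\mathcal{A}_2(x),\mathcal{A}_3(x))\ge|\mathcal{A}_2(x)|^2|\mathcal{A}_3(x)|^2/|\mathcal{A}_2(x)+\mathcal{A}_3(x)|\gg x^{3/2}(\log x)^{-22}$. This is the trivial maximum up to logarithms, so proving $E\ll x^{3/2-c}$ is no easier than the corollary itself. Moreover, knowing $a_1+n\in\mathcal{S}+\mathcal{S}$ for $n\in\mathcal{B}_2$ gives no control at all on $r(n)$.

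The residue route is too weak. Suppose $U_{\mathcal{A}_3}(p;x)\neq\emptyset$. Then the argument of Lemma~\ref{lem:basicstructure}, applied to the pair $(\mathcal{A}_1,\mathcal{A}_2+\mathcal{A}_3)$, only yields $v_{\mathcal{A}_1}(p;x)+v_{\mathcal{A}_2}(p;x)\le p$. The squares themselves essentially satisfy this, occupying $\tfrac{p+1}{2}$ classes each. With $v_{\mathcal{A}_1}(p;x)\approx p/2$, Gallagher's larger sieve (Lemma~\ref{lem:largesieve.gallagher}) gives nothing better than $\sqrt{x}$-type bounds, since it needs $v(p)=o(p)$ to do better. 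The large sieve likewise just reproduces the product bound of Lemma~\ref{lem:productsize}. There is no path to $x^{o(1)}$.

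The missing idea is that the contradiction cannot come from a single pair. The configuration $|\mathcal{A}_2(x)|,|\mathcal{A}_3(x)|,|\mathcal{A}_2(x)+\mathcal{A}_3(x)|\approx\sqrt{x}$ is exactly what Theorem~\ref{thm:2squares} predicts. You must instead compare all three pairwise sumsets with the triple sumset:
\begin{itemize}
\item Applying Theorem~\ref{thm:2squares} to each grouping $\mathcal{A}_i$, $\mathcal{A}_j+\mathcal{A}_k$ gives $|\mathcal{A}_j(x)+\mathcal{A}_k(x)|\ll\sqrt{x}(\log x)^4$ for all three pairs.
\item Since the sets are bounded below, $\mathcal{A}_1+\mathcal{A}_2+\mathcal{A}_3=\mathcal{S}+\mathcal{S}$ forces $|\mathcal{A}_1(x)+\mathcal{A}_2(x)+\mathcal{A}_3(x)|\gg x/\sqrt{\log x}$. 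Your proposal never uses this.
\item An additive-combinatorial inequality then links the two. The paper uses Ruzsa's $|A+B+C|^2\le|A+B||A+C||B+C|$, which gives $x^2/\log x\ll x^{3/2}(\log x)^{12}$, a contradiction.
\end{itemize}
Such an inequality is genuinely needed. The trivial bound $|A+B+C|\le|A+B||C|\ll x(\log x)^8$ misses the required contradiction only by a power of $\log x$.
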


\begin{proof}
We begin by recalling a result of Ruzsa~\cite{Ruzsa:2007:CQS}: 
If $A, B, C \subset \mathbb{Z}$ are finite and non-empty, then
\begin{align}\label{inequ:ruzsa}
|A+B+C|^2 \leq |A+B| |A+C| |B+C|.
\end{align}
Let $\mathcal{A}_1, \mathcal{A}_2, \mathcal{A}_3 \subseteq \mathbb{Z}$ with $\mathcal{A}_1+\mathcal{A}_2+\mathcal{A}_3=\mathcal{S}+\mathcal{S}$.  Suppose for a contradiction that $|\mathcal{A}_i| \geq 2$ for all $i$. Applying Theorem~\ref{thm:2squares} with $\mathcal{A} = \mathcal{A}_i$ and $\mathcal{B}=\mathcal{A}_j + \mathcal{A}_k$ for $\{i,j,k\}=\{1,2,3\}$, we then get 
\begin{align*}
&| \mathcal{A}_i (x)|   \ll \sqrt{x} \log(x)^{4}, \\
&|\mathcal{A}_j(x)  +  \mathcal{A}_k(x)|  \leq   |(\mathcal{A}_j + \mathcal{A}_k)(2x)| \ll \sqrt{x} \log(x)^4. 
\end{align*}
On the other hand, writing $K=\min(\mathcal{A}_1 \cup \mathcal{A}_2 \cup \mathcal{A}_3 \cup \{0\})$, equality $\mathcal{A}_1+\mathcal{A}_2+\mathcal{A}_3=\mathcal{S}+\mathcal{S}$  gives
\begin{align*}
|\mathcal{A}_1(x) +  \mathcal{A}_2(x) + \mathcal{A}_3(x)| &\geq |(\mathcal{S}+\mathcal{S})(x+2K)| - |K| \max_{j \neq k}  |(\mathcal{A}_j +\mathcal{A}_k)(x)|- |K|^2 \max_{i}  | \mathcal{A}_i(x)| -|K|^3\\  &\gg  \frac{x}{\sqrt{\log(x)}}.
\end{align*}
(Here all implied constants may depend on $\mathcal{A}_i$ for $i \in \{1,2,3\}$.)

\medskip Now we apply inequality (\ref{inequ:ruzsa}) with $A=\mathcal{A}_1(x)$ and $B=\mathcal{A}_2(x)$ and $C=\mathcal{A}_3(x)$. For large $x$, then
\begin{align*}
 \frac{x^2}{ \log(x) }   &\ll |\mathcal{A}_1(x) +  \mathcal{A}_2(x) + \mathcal{A}_3(x)|^2\\
&\leq |\mathcal{A}_1(x) +  \mathcal{A}_2(x)|  |\mathcal{A}_1(x)  + \mathcal{A}_3(x)| |   \mathcal{A}_2(x) + \mathcal{A}_3(x)| \\
&\ll (\sqrt{x} \log(x)^4)^3 = x^{3/2} \log(x)^{12}.
\end{align*}
But this is a contradiction. Therefore  $|\mathcal{A}_i| =1$ for some $i \in \{1,2,3\}$. 
\end{proof} 

\section{Explicit constructions of sumsets}\label{sec:constructions}

In the previous section we assumed that $\mathcal{A}_1+ \mathcal{A}_2$ is a non-trivial additive decomposition  of the set of sums of  two squares and proved upper and lower bounds on   $|\mathcal{A}_i\cap [1,x]|$. Now we approach the problem from a different direction and   explicitly construct alternative sumset decompositions of $\mathcal{S} + \mathcal{S}$. We focus on sets which are either subsets or supersets of the squares.

\subsection{Constructing many sumset decompositions} 
We first think about subsets of $\mathcal{S}$. Part~(1)  
  of Theorem~\ref{thm:construction2} is the following lemma:
\begin{lemma}\label{lem:subsets}
Let  $\mathcal{S} = \{n^2: n \in \mathbb{N} \cup \{0\}\}$.  Let $p$ be a prime with $p \equiv 1 \bmod 4$.

 Let  $M_1, M_2 \subseteq \mathbb{N}$ with $M_1 \cap M_2 = \emptyset$. Let $ \mathcal{A}_i=\mathcal{S}\setminus\{p^2 m^2: m\in M_i\}$ for $i \in \{1,2\}$.  Then 
\begin{align*} 
&\mathcal{A}_1 + \mathcal{A}_2= \mathcal{S}+\mathcal{S}.
 \end{align*}  
\end{lemma}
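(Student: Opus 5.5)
The inclusion $\mathcal{A}_1+\mathcal{A}_2 \subseteq \mathcal{S}+\mathcal{S}$ is immediate, since $\mathcal{A}_i \subseteq \mathcal{S}$. For the reverse inclusion, fix $n = x^2+y^2$ with $x,y \geq 0$; we must exhibit some representation $n = u^2+v^2$ with $u^2 \in \mathcal{A}_1$ and $v^2 \in \mathcal{A}_2$. A representation $(x,y)$ fails only if $x^2 \notin \mathcal{A}_1$ or $y^2 \notin \mathcal{A}_2$. The first happens exactly when $x = pm$ with $m \in M_1$, the second exactly when $y = pm'$ with $m' \in M_2$. Since the ordered pair $(x,y)$ can also be used as $(y,x)$, the representation survives unless both orderings fail.

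The first step is a case analysis on the single representation $(x,y)$.
\begin{itemize}
\item If $p \nmid x$ and $p\nmid y$, then both squares lie in $\mathcal{A}_1$ and in $\mathcal{A}_2$, and we are done.
\item If exactly one of them, say $x$, is divisible by $p$, then $y^2$ lies in both sets. If $x^2 \in \mathcal{A}_1$ we use $(x,y)$. Otherwise $x = pm$ with $m \in M_1$, so $m \notin M_2$ by disjointness, hence $x^2 \in \mathcal{A}_2$ and we use $(y,x)$.
\item If $x = pm$ and $y = pm'$, we have $n = p^2(m^2+m'^2)$. If $m \neq m'$ and the set memberships cooperate, a swap again works.
\end{itemize}
The genuinely bad case is when both orderings fail. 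For $(x,y)$ this means $m\in M_1$ and $m'\in M_2$. For $(y,x)$ it means $m'\in M_1$ or $m\in M_2$ must hold for failure, which disjointness allows, e.g.\ $m\in M_1$, $m' \in M_2$, with swapping requiring $m' \notin M_1$ and $m\notin M_2$. So failure occurs exactly when $m\in M_1$ and $m'\in M_2$ simultaneously.

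The key step, and the reason for the hypothesis $p\equiv 1 \bmod 4$, is to produce a different representation of $n$ in which $p$ does not divide both coordinates. Write $p = a^2+b^2$ with $ab\neq 0$, and use Gaussian integers: $n = |p\,(m+m'i)|^2 = |(a+bi)^2(m+m'i)\,\overline{(a+bi)}^{\,0}|^2$. More usefully, $p^2 = |(a+bi)^2|^2$, and $(a+bi)^2 = (a^2-b^2) + 2abi$ is not divisible by $p$ in $\mathbb{Z}[i]$, because $a+bi$ and $a-bi$ are non-associate primes. Hence
\[ n = |(a+bi)^2 (m + m'i)|^2 = u^2+v^2, \]
and we would like $p\nmid u$ or $p \nmid v$. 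This holds unless $p \mid (a+bi)^2(m+m'i)$, i.e.\ unless $(a-bi)^2 \mid m+m'i$. If that happens, replace $(a+bi)^2$ by $(a-bi)^2$ instead; both fail only if $p^2 \mid m+m'i$, i.e.\ $p\mid m$ and $p \mid m'$. In that situation we descend: pass to $n/p^2$, and more generally write $x = p^r x_0$, $y = p^r y_0$ with $p \nmid \gcd(x_0,y_0)$.

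The main obstacle is organising this descent cleanly. My plan is to work directly with the full representation $n = p^{2r}(x_0^2+y_0^2)$ and multiply $x_0+y_0 i$ by $(a\pm bi)^{2r}$, choosing the sign $\pm$ so that the product is not divisible by the rational prime $p$. This is possible since $x_0+y_0i$ is divisible by at most one of $a+bi$ and $a-bi$, given $p \nmid \gcd(x_0,y_0)$. The result is a representation $n=u^2+v^2$ with $p\nmid u$ or $p \nmid v$, say $p \nmid v$. Then $v^2$ lies in both $\mathcal{A}_1$ and $\mathcal{A}_2$. Since $M_1\cap M_2=\emptyset$, $u^2$ lies in at least one of $\mathcal{A}_1$ and $\mathcal{A}_2$, and we place $u^2$ in that set and $v^2$ in the other, which finishes the proof. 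This last argument in fact subsumes the earlier cases, so in the write-up I would present only it, after checking that $v = 0$ or $u=0$ causes no trouble (note $0 \in \mathcal{S}$ is never removed, since $M_i\subseteq\mathbb{N}$).
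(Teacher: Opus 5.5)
Your final argument is correct, but it takes a different route from the paper.

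\textbf{The paper's route.} The paper first uses $\mathcal{A}_1\cup\mathcal{A}_2=\mathcal{S}$ to dispose of the case where $x^2$ or $y^2$ lies in $\mathcal{A}_1\cap\mathcal{A}_2$. In the remaining case $p\mid x$ and $p\mid y$, and it argues by counting. If $p^k$ is the exact power of $p$ dividing $n$, then $k\ge 2$, and Jacobi's formula $r_2(n)=4\prod(b_i+1)$ gives $r_2(n)=\frac{k+1}{k-1}\,r_2(n/p^2)>r_2(n/p^2)$. Here $r_2(n/p^2)$ counts exactly the representations of $n$ with both coordinates divisible by $p$. So some representation $n=a^2+b^2$ has $p\nmid a$, hence also $p\nmid b$ because $p\mid n$, and both squares lie in $\mathcal{A}_1\cap\mathcal{A}_2$.

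\textbf{Your route.} You construct such a representation explicitly. You strip the full power $p^r$ from $\gcd(x,y)$ and multiply $x_0+y_0i$ by $(a\pm bi)^{2r}$. This uses only that $a+bi$ and $a-bi$ are non-associate primes in the UFD $\mathbb{Z}[i]$.

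\textbf{Comparison.} The paper's argument is shorter given Jacobi's formula and needs only the single comparison $n$ versus $n/p^2$, with no descent to the primitive part. It is, however, non-constructive, and it imports a formula whose proof rests on the same Gaussian-integer factorization. Your argument is self-contained and constructive, and it treats all cases uniformly. It also needs only one coordinate prime to $p$, since the other square is placed using the disjointness of $M_1$ and $M_2$.

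\textbf{Slips to keep out of the write-up.} Your exploratory paragraphs contain a few errors.
\begin{enumerate}
\item When $x=pm$, $y=pm'$ with $m\in M_1$ and $m'\in M_2$, swapping does work, since disjointness gives $m\notin M_2$ and $m'\notin M_1$. Both orderings fail precisely when $m$ and $m'$ lie in the same $M_i$.
\item $p\mid (a+bi)^2(m+m'i)$ holds if and only if $a-bi$ divides $m+m'i$, not $(a-bi)^2$. Both choices fail if and only if $p\mid m+m'i$, not $p^2$.
\item Your final version with exponent $2r$ states the divisibility condition correctly. You should, however, treat $n=0$ separately, since $r$ is undefined when $x=y=0$.
\end{enumerate}
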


In particular, choosing  $p=5$, we see that the set of sums of two squares can be produced by a sumset $\mathcal{A}_1+\mathcal{A}_2$ with $\mathcal{A}_1,\mathcal{A}_2 \subseteq \mathcal{S}$ where a fifth of all squares has been removed from $\mathcal{A}_1$ or a tenth of the squares has been removed from both $\mathcal{A}_1$ and $\mathcal{A}_2$. Further, $\mathcal{S}+\mathcal{S}$ can be written as a sumset in uncountably many different ways. 

\begin{proof}[Proof of Lemma~\ref{lem:subsets}] Let  $M_1, M_2 \subseteq \mathbb{N}$ with $M_1 \cap M_2 = \emptyset$. Let $ \mathcal{A}_i=\mathcal{S}\setminus\{p^2 m^2: m\in M_i\}$ for $i \in \{1,2\}$. 
We want to show that  an arbitrary sum of two squares $x^2+y^2$ is in $\mathcal{A}_1+\mathcal{A}_2$. 

\medskip First note that $M_1 \cap M_2 = \emptyset$ implies $\mathcal{A}_1 \cup \mathcal{A}_2 = \mathcal{S}$. So $x^2, y^2 \in \mathcal{A}_1 \cup \mathcal{A}_2$. Thus if   $x^2 \in \mathcal{A}_1 \cap \mathcal{A}_2$ or $y^2 \in \mathcal{A}_1 \cap \mathcal{A}_2$, then clearly $x^2+y^2 \in \mathcal{A}_1+\mathcal{A}_2$.

\medskip Now we consider the case $x^2, y^2 \not \in  \mathcal{A}_1 \cap \mathcal{A}_2$. Then $p^2 \mid x^2$ and $p^2 \mid y^2$. In particular, $p^2 \mid x^2+y^2$. Let $k$ denote the largest number with $p^k \mid x^2+y^2$, and remember that $p \equiv 1 \bmod 4$.  Now we recall Jacobi's formula~\cite{Jacobi:1829:FTF} for $r_2(n) = \{(a,b) \in \mathbb{Z}^2: a^2+b^2=n\}$, the number of representations of $n$ as a sum of two squares: If $n=2^a p_1^{b_1} \dots p_k^{b_k} q_1^{c_1} \dots q_s^{c_s} $ with $p_i \equiv 1 \bmod 4$ and $q_i \equiv 3 \bmod 4$ and $2 \mid c_i$ for each $i$, then 
\begin{align*}
    r_2(n)= 4 (b_1+1) \dots (b_k+1).
\end{align*}
In particular, since $p \equiv 1 \bmod 4$ and $p^k \mid x^2+y^2$ and $p^{k+1} \nmid x^2+y^2$, we  see that 
  $$r_2(x^2+y^2)=\left(\tfrac{k+1}{k-1} \right)r_2\Big(\Big(\frac{x}{p}\Big)^2+\Big(\frac{y}{p}\Big)^2\Big).$$ Since $k \geq 2$, we have $r_2(x^2+y^2) > r_2((\frac{x}{p})^2+(\frac{y}{p})^2)$.
However, the number of representations of 
$x^2+y^2$ of type $ (p\, m)^2+(p\,n)^2$ is just
$r_2((\frac{x}{p})^2+(\frac{y}{p})^2)$.
Hence there must exist an alternative  representation $a^2+b^2=x^2+y^2$  with both $a$ and $b$ coprime to $p$. (If one summand is coprime to $p$, then both are.) But then $a, b \in  \mathcal{A}_1 \cap \mathcal{A}_2$ and indeed $x^2+y^2 \in  \mathcal{A}_1 + \mathcal{A}_2$.
\end{proof}

 \subsection{No decompositions via supersets}
Finally,  we show that we cannot create new sumset decompositions of $\mathcal{S}+\mathcal{S}$ by   adding some non-squares to $\mathcal{S}$.  This is Part~(2) of Theorem~\ref{thm:construction2}.

\begin{lemma}\label{lem:const2} 
Let $\mathcal{A}_1, \mathcal{A}_2 \subseteq \mathbb{Z}$ with $\mathcal{S} \subseteq \mathcal{A}_1$ and $\mathcal{S} \subseteq \mathcal{A}_2$. Suppose further  that $\mathcal{A}_1 \neq \mathcal{S}$ or $\mathcal{A}_2 \neq \mathcal{S}$. Then 
\begin{align*} 
&\mathcal{A}_1 + \mathcal{A}_2 \neq \mathcal{S}+\mathcal{S}.
 \end{align*}  
\end{lemma}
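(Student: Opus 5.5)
We argue by contradiction, using only the elementary fact already invoked in the proof of Lemma~\ref{lem:basicstructure}: if $q\equiv 3 \bmod 4$ is prime and $q \,\|\, n$ (that is, $q\mid n$ but $q^2\nmid n$), then $n\notin\mathcal{S}+\mathcal{S}$. By symmetry we may assume $\mathcal{A}_1\neq\mathcal{S}$, and pick $t\in\mathcal{A}_1\setminus\mathcal{S}$. Suppose $\mathcal{A}_1+\mathcal{A}_2=\mathcal{S}+\mathcal{S}$. Since $\mathcal{S}\subseteq\mathcal{A}_2$, we get $t+\mathcal{S}\subseteq \mathcal{S}+\mathcal{S}$. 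In particular $t=t+0^2\in\mathcal{S}+\mathcal{S}$, so $t\geq 0$. Since $t$ is not a square, $t\geq 2$, and we can write $t=s^2t_0$ with $t_0>1$ squarefree. The goal is to find some $n\geq 0$ with $t+n^2\notin\mathcal{S}+\mathcal{S}$, which is the desired contradiction.

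\textbf{Step 1: a suitable prime.} We want a prime $q\equiv 3\bmod 4$ with $q\nmid 2t$ such that $-t$ is a quadratic residue mod $q$. Since $\left(\frac{-1}{q}\right)=-1$ for such $q$ and $\left(\frac{t}{q}\right)=\left(\frac{t_0}{q}\right)$, this amounts to $\chi_{-4}(q)=-1$ and $\chi_D(q)=-1$. Here $\chi_{-4}$ is the nontrivial character mod $4$ and $\chi_D$ is the Kronecker character attached to $\mathbb{Q}(\sqrt{t_0})$, with $D\in\{t_0,4t_0\}$; by quadratic reciprocity, $\chi_D(q)=\left(\frac{t_0}{q}\right)$ for odd $q\nmid t_0$. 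Both are nontrivial real characters modulo $4t_0$. They are distinct, since $\chi_D$ is even ($D>0$) while $\chi_{-4}$ is odd, so $\chi_{-4}\chi_D$ is also nontrivial. By orthogonality, the proportion of reduced residues $r \bmod 4t_0$ with $\chi_{-4}(r)=\chi_D(r)=-1$ equals
\[
\tfrac14\bigl(1-\overline{\chi_{-4}}-\overline{\chi_D}+\overline{\chi_{-4}\chi_D}\bigr)=\tfrac14,
\]
where the bars denote averages over reduced residues and each average of a nontrivial character is $0$. So such residue classes exist, and Dirichlet's theorem supplies infinitely many primes in them. We take one with $q\nmid 2t$.

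\textbf{Step 2: exact divisibility.} Choose $n_0$ with $n_0^2\equiv -t \pmod q$; then $q\nmid n_0$. Compare $n_0$ and $n_0+q$:
\[
(n_0+q)^2+t-(n_0^2+t)\equiv 2n_0q\not\equiv 0 \pmod{q^2}.
\]
Hence at least one of $n\in\{n_0,n_0+q\}$, which we may take nonnegative, satisfies $q\,\|\,t+n^2$. With $q\equiv 3\bmod 4$, this gives $t+n^2\notin\mathcal{S}+\mathcal{S}$, contradicting $t+n^2\in t+\mathcal{S}\subseteq\mathcal{S}+\mathcal{S}$.

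The only nontrivial point is Step 1, namely guaranteeing a prime $q\equiv 3\bmod 4$ for which $t$ is a non-residue. The key observation is that the parity mismatch between the even character $\chi_D$ and the odd character $\chi_{-4}$ makes the two conditions independent. Everything else is routine Hensel-type lifting.
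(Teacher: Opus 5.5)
Your proof is correct, but it takes a different route from the paper's.

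\textbf{What you do differently.} You treat every non-square $t$ uniformly. You only pass to the squarefree part $t_0$ inside the Legendre symbol, requiring $q\nmid s$ so that $\left(\frac{t}{q}\right)=\left(\frac{t_0}{q}\right)$. Then the parity mismatch between the even Kronecker character $\chi_D$ and the odd character $\chi_{-4}$ gives a positive proportion of reduced classes mod $4t_0$ with $q\equiv 3 \bmod 4$ and $\left(\frac{t_0}{q}\right)=-1$.

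\textbf{What the paper does.} It writes $c=c'k^2$ with $c'$ squarefree and splits into three cases.
\begin{enumerate}[(a)]
\item If $c'$ has a prime factor $q\equiv 3\bmod 4$, it takes $x=q$ directly.
\item If $c'\equiv 2\bmod 4$, it takes $x=1$.
\item Only when every prime factor of $c'$ is $\equiv 1\bmod 4$ does it use the Chinese Remainder Theorem, Dirichlet's theorem and quadratic reciprocity. It chooses $q\equiv 3\bmod 4$ that is a non-residue mod $p_1$ and a residue mod the other $p_i$.
\end{enumerate}
It then transfers the witness to $c$ by taking $x=x'k$ and using $x^2+c=k^2((x')^2+c')$. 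Your lifting step, comparing $n_0$ with $n_0+q$, is the same idea as the paper's choice of $l^*$ in its Case A.

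\textbf{What each approach buys.} The paper's argument is more elementary. Cases (a) and (b) give explicit witnesses with no Dirichlet's theorem at all. In case (c) the modulus $c'$ is odd, so the conditions modulo $4$ and modulo $c'$ are independent by the Chinese Remainder Theorem alone. No Kronecker symbol or parity fact is needed. Your argument is shorter and case-free, and it avoids the rescaling step. It also shows that a quarter of the reduced classes mod $4t_0$ contain suitable primes. The cost is heavier input: that $q\mapsto\left(\frac{t_0}{q}\right)$ is a Dirichlet character modulo $4t_0$ with $\chi_D(-1)=\operatorname{sgn} D$, and Dirichlet's theorem in every case.
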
 

\begin{proof}
Suppose $c \in \mathbb{Z}\setminus \mathcal{S}$. We need to show that there exists some 
$x\in \mathbb{N} \cup \{0\}$ with $x^2+c \not\in \mathcal{S}+\mathcal{S}$. \medskip
 
\underline{Case A}: $c= p_1 \cdots p_k$ is a product of $k$ distinct primes of the form 
$p_i \equiv 1 \bmod 4$, where $1\leq i \leq k$. 

\medskip

We first choose a quadratic non-residue  $b \mbox{ mod } p_1 $, i.e.
$(\frac{b}{p_1})=-1$. By the Chinese Remainder Theorem 
there exists some $B \in \mathbb{N}$ with $B \equiv 3 \bmod 4$, $B \equiv b \bmod p_1$ and $B \equiv 1 \bmod p_i$ for $ i=2, \ldots, k$.
 By Dirichlet's theorem and as $\gcd(B,4p_1 \ldots p_k)=1$, 
  there exists a prime $q$ with
\[q\equiv B \bmod 4 p_1 \ldots p_k.\]
Since $q \equiv b \bmod p_1$, then   $(\frac{q}{p_1})=-1$. Since 
$q \equiv 1 \bmod p_i$, we have  $(\frac{q}{p_i})=1$ for 
$2 \leq i \leq k$. Using quadratic reciprocity and that $p_i \equiv 1 \bmod 4$ and $q\equiv 3 \bmod 4$, it follows that $$\left(\frac{-c}{q}\right) =
\left(\frac{-1}{q}\right)\left(\frac{p_1}{q}\right)\cdots  \left(\frac{p_k}{q}\right)=
(-1)\left(\frac{q}{p_1}\right)\cdots  \left(\frac{q}{p_k}\right)=
(-1)(-1) =1.$$

Hence $-c$ is a quadratic residue mod $q$, and there exists $m \bmod q$ with $m^2\equiv -c \bmod q$. Since  $\gcd(c,q)=\gcd( p_1 \cdots p_k,q)=1$, we also have that $2m$ is coprime to $q$. This will be useful in the next step.

\medskip
We consider $x'=m+ql$, where $l \in \mathbb{N}$. Then $(x')^2+c \equiv 0 \bmod q$. Since $2m$ is coprime to $q$, 
  one has $q^2\mid (x')^2+c=m^2+2mql+q^2l^2+c$ if and only if
  $ql\equiv (-c-m^2)(2m)^{-1} \bmod q^2$.
 Note that the right hand side of
  $ql\equiv(-c-m^2)(2m)^{-1} \bmod q^2$ does not depend on $l$.
  Hence there exists an $l^*\in \mathbb{N}$ with
  $ql^*\not\equiv (-c-m^2)(2m)^{-1} \bmod q^2$. With $x=m+ql^*$ we have
  \[ x^2+c\equiv m^2+c\equiv 0  \bmod q, \quad \mbox{ but } \quad x^2+c \not\equiv 0  \bmod q^2.\]
  We conclude that $x^2+c$ cannot be a sum of two squares,
  as $q\equiv 3 \bmod 4$.

\medskip
   
   \underline{Case B}:  There exists a prime $q\equiv 3 \bmod 4$
   with $q|c$, but $q^2 \nmid c$.  Set $x=q$. As $q|c$, also $q|q^2+c$.
   But as $q^2\nmid c$, also $q^2 \nmid q^2+c$. So  $q^2+c$ is not a sum of two squares.
   
    \medskip
   
   \underline{Case C}:  $2|c$, but $4 \nmid c$. Set $x=1$. Then $x^2+c\equiv 3 \bmod 4$,
   and $x^2+c$ is not a sum of two squares.

\medskip

\underline{The general case}:
  Consider an arbitrary $c\not\in \mathcal{S}$. If $c<0$, then $0^2+c<0$. So assume $c>0$. 
   Write $c=c' k^2$, where $k \in  \mathbb{N}$ and $c'>1$ is squarefree. 
   If $c'$ only consists of prime factors $p \equiv 1 \bmod 4$,
   we may apply Case A to $c'$.  If $c'$ has a prime factor $q\equiv 3 \bmod 4$, then $c'$ squarefree implies   $q^2 \nmid c'$, and so we can apply Case B to $c'$. Finally, if neither A nor B apply, we have  $2|c'$ and $4\nmid c'$, and so   can apply Case C to $c'$. In all  three cases there exists   $x' \in\mathbb{N}$ with $(x')^2+c' $  not  a sum of two squares. Some prime factor $p \equiv 3$ mod $4$  appears in $(x')^2+c'$ with odd exponent. 
   Now put $x=x'k$. Then 
   \[x^2+c=(x')^2k^2+c'k^2=k^2((x')^2+c')
     \]
     and  $p\equiv 3 \bmod 4$  also appears in $x^2+c$ with odd exponent.
    Hence $x^2+c$ is not a sum of two squares.
\end{proof}

\section*{ai declaration}

The work presented here was done   without the use of artificial intelligence tools, all proofs are solely  the authors' and a complete version of this paper was written without AI assistance. At the end, we asked Claude and ChatGPT for feedback on this draft, and we accordingly fixed a small number of typos and minor   errors.

\bibliography{bibstad}
\bibliographystyle{acm} 

\end{document}